\documentclass[a4paper, 11pt]{article}

\usepackage[utf8]{inputenc}
\usepackage[T1]{fontenc}
\usepackage[english]{babel} % English language setting

\usepackage[margin=1in]{geometry} % Standard 1-inch margins
\usepackage{hyperref} % For clickable links
\hypersetup{
    colorlinks=true,
    linkcolor=blue,
    citecolor=darkgray,
    urlcolor=blue
}
\usepackage{xcolor}

\usepackage{amsmath, amssymb, amsthm, amsfonts}
\usepackage{mathtools} % Additional math tools
\usepackage{bm}        % Bold math symbols

\usepackage{graphicx}
\usepackage{subcaption}
\usepackage{float}      % To enforce position with [H]

\usepackage{algorithm}
\usepackage{algpseudocode}
\usepackage{tikz}                 % Charge TikZ
\usepackage{amssymb}
\usepackage{algpseudocode}
\usetikzlibrary{arrows.meta, calc, decorations.markings}
\usepackage{booktabs}

\theoremstyle{plain}
\newtheorem{theorem}{Theorem}[section]
\newtheorem{lemma}[theorem]{Lemma}
\newtheorem{proposition}[theorem]{Proposition}
\newtheorem{corollary}[theorem]{Corollary}

\theoremstyle{definition}
\newtheorem{definition}{Definition}[section]
\newtheorem{assumption}{Assumption}

\theoremstyle{remark}
\newtheorem{remark}{Remark}[section]

\newcommand{\R}{\mathbb{R}}
\newcommand{\N}{\mathbb{N}}
\newcommand{\St}{St(n,p)} % Stiefel Manifold
\newcommand{\Gr}{Gr(n,p)} % Grassmannian
\newcommand{\trace}{\mathrm{tr}}
\newcommand{\Sym}{\mathrm{Sym}}

\DeclareMathOperator{\Image}{Im}

\DeclareMathOperator{\rank}{rank}
\DeclareMathOperator{\Span}{Span}

\DeclareMathOperator{\diag}{diag}

\newcommand{\so}{\mathfrak{so}}

\title{\textbf{Grassmannian Geometry and Global Convergence of Variable Projection for Neural Networks}}

\author{
    Mathias DUS \thanks{IRMA, Strasbourg, France (\texttt{mathias.dus@math.unistra.fr}).} 
}
\date{\today}

\begin{document}

\maketitle

% --- Abstract ---
\begin{abstract}
Training deep neural networks and Physics-Informed Neural Networks (PINNs) often leads to ill-conditioned and stiff optimization problems. A key structural feature of these models is that they are linear in the output-layer parameters and nonlinear in the hidden-layer parameters, yielding a separable nonlinear least-squares formulation. In this work, we study the classical variable projection (VarPro) method for such problems in the context of deep neural networks. We provide a geometric formulation on the Grassmannian and analyze the structure of critical points and convergence properties of the reduced problem. When the feature map is parametrized by a neural network, we show that these properties persist except in rank-deficient regimes, which we address via a regularized Grassmannian framework. Numerical experiments for regression and PINNs, including an efficient solver for the heat equation, illustrate the practical effectiveness of the approach.

    \vspace{0.5em}
    \noindent \textbf{Keywords: Neural networks, Optimization on matrix manifold, Partial differential equations, Regression problems } 
    
    \vspace{0.5em}
    \noindent \textbf{AMS Subject Classification: 68T07, 62J02, 90C30} 
\end{abstract}

% --- Content ---

\section{Introduction}

\subsection{Litterature review}

The approximation of high-dimensional functions by deep neural networks (DNNs) lies at the core of modern machine learning and scientific computing. In recent years, this paradigm has been extended to the numerical solution of partial differential equations (PDEs) through Physics-Informed Neural Networks (PINNs), which recast forward and inverse PDE problems as constrained empirical risk minimization problems \cite{Raissi2019}. Despite their flexibility and expressive power, PINNs and related deep-learning-based solvers often suffer from severe optimization pathologies, including stiffness, poor conditioning, and slow or unstable convergence, especially when multiple physical constraints are enforced simultaneously \cite{Wang2021}.

From an optimization standpoint, the training of many neural network architectures admits a natural \emph{separable} structure. In particular, for standard feedforward networks and PINNs, the model output is linear with respect to the parameters of the final layer and nonlinear with respect to the hidden-layer parameters. Denoting by $\alpha \in \mathbb{R}^p$ the output-layer weights and by $\theta$ the collection of nonlinear parameters, the network output can be written as
\begin{equation}\label{eq:separable_model}
    f(x;\alpha,\theta) = \Phi(x;\theta)\,\alpha,
\end{equation}
where $\Phi(\theta)$ denotes the nonlinear feature map induced by the hidden layers. Training then amounts to solving a separable nonlinear least-squares (SNLS) problem of the form
\begin{equation}\label{eq:snls}
    \min_{\alpha,\theta} 
    \mathcal{L}(\alpha,\theta)
    = \tfrac{1}{2}\|y-\Phi(\theta)\alpha\|_2^2
    + \lambda \mathcal{R}(\alpha,\theta),
\end{equation}
where $\mathcal{R}$ may include regularization terms or physics-based residuals, as in PINNs.

Standard optimization strategies in machine learning—such as stochastic gradient descent (SGD) and adaptive variants like Adam—treat all parameters on equal footing and ignore this structural separation. While these methods are broadly applicable, their convergence analyses rely on assumptions such as smoothness, bounded gradients, or favorable noise properties that are often violated in highly nonconvex and ill-conditioned problems arising in scientific machine learning \cite{Bottou2018}. In particular, the interaction between poorly conditioned feature maps $\Phi(\theta)$ and gradient-based updates can exacerbate stiffness and slow convergence, a phenomenon observed empirically and analyzed in the PINN literature \cite{Wang2021}.

A classical alternative for SNLS problems is the \emph{variable projection} (VarPro) method, introduced by Golub and Pereyra \cite{Golub1973}. The key idea is to explicitly eliminate the linear variables by solving
\[
    \alpha(\theta) = \arg\min_{\alpha} \mathcal{L}(\alpha,\theta),
\]
which yields $\alpha(\theta) = \Phi(\theta)^{\dagger}y$ in the unregularized case, where $\dagger$ denotes the Moore--Penrose pseudoinverse. Substituting this expression into the loss function leads to a \emph{reduced functional} depending solely on the nonlinear parameters,
\[
    r(\theta)
    = \tfrac{1}{2}\|(I - \Phi(\theta)\Phi(\theta)^{\dagger})y\|_2^2,
\]
whose gradient and Hessian incorporate the sensitivity of the projection operator with respect to $\theta$. Classical analyses have shown that VarPro can significantly improve convergence behavior and numerical stability compared to alternating minimization strategies, and that this improvement can be understood through the spectral properties of the reduced problem \cite{Golub1973,Ruhe1980}. Practical algorithmic realizations were developed in \cite{Kaufman1975}.

Despite its long-standing success in numerical linear algebra, the theoretical behavior of VarPro in the context of deep neural networks remains insufficiently understood. The primary challenge stems from the fact that the feature matrix $\Phi(\theta)$ depends nonlinearly and nonconvexly on $\theta$, is often highly over-parameterized, and may become rank-deficient during training. While early work explored projection-based training for neural networks \cite{Pereyra2006}, and recent studies demonstrated the practical efficiency of VarPro-inspired algorithms in modern deep-learning frameworks \cite{Newman2021}, existing analyses  do not provide rigorous convergence guarantees in regimes relevant to deep learning and PINNs.

Recent theoretical developments in the study of neural network training provide additional motivation for revisiting VarPro. Analyses of lazy training and kernel learning regimes have clarified when the feature map $\Phi(\theta)$ remains nearly fixed versus when it evolves substantially during optimization \cite{Chizat2019}. Complementarily, two-timescale analyses have shown that rapid adaptation of linear output weights coupled with slower feature evolution can lead to accelerated convergence and improved approximation properties \cite{Barboni2025}. From this perspective, VarPro can be interpreted as enforcing an instantaneous optimality condition on the linear variables, naturally aligning with fast-slow learning dynamics.

The present work builds on these insights to develop a convergence theory for VarPro algorithms coupled with deep neural networks. Our analysis connects classical SNLS results \cite{Golub1973,Ruhe1980} with modern perspectives on feature learning and over-parameterization, and clarifies how the conditioning of the feature map $\Phi(\theta)$ and the spectral properties of the reduced Hessian govern convergence behavior.

This paper is organized as follows. In Section \ref{sec:st_gr}, we recall the basics of Grassmannian geometry useful for our purposes. Then in Section \ref{sec:VarPro}, we study the VarPro optimization problem posed on the Grassmannian giving a fine analysis of the critical points. In Section \ref{sec:varpro_DNN}, the finite dimensional vector space is represented by a parametrized neural network. In such context, we define another optimization problem that couples neural networks and the VarPro optimization framework. In particular, we show that properties of critical points are still the same as in Section \ref{sec:VarPro} except when feature map $\Phi$ is not full rank. Then we analyze how to circumvent this rank deficiency in Section \ref{sec:stick_strata} where a regularization of the Grassmannian manifold is introduced. Finally in Section \ref{sec:num}, we show on some numerical examples that the method works in practice for regression or PINNs formulations. Furthermore, we give a scheme to solve the heat equation efficiently using the VarPro method and an implicit euler scheme. 

\section{The Stiefel and the Grassmannian manifolds}\label{sec:st_gr}

Here, we recall the basics of the geometry of finite dimensional vector spaces beginning by defining for $p \leq n \in \N$ the Stiefel manifold and the Grassmannian. Nothing is really new in sections \ref{sec:st_gr}-\ref{sec:VarPro} and all of the material can be found in the fundamental book \cite{Absil2008} sections 3-5 or the reference article \cite{Edelman1998}.

\begin{definition}
The Stiefel manifold, denoted $\St$, is the set of all orthonormal $p$-frames in $\R^n$ (where $p \le n$). It is defined as:
\begin{equation}
    \St = \{ X \in \R^{n \times p} \mid X^\top X = I_p \}
\end{equation}
\end{definition}

\subsection{The Tangent Space}

To work with this manifold, we first identify the tangent space at a point $X$.

\begin{lemma}[Characterization of the Stiefel Tangent Space]
The tangent space to $\St$ at a point $X$ is given by:
\begin{equation}
    T_X \St = \{ Z \in \R^{n \times p} \mid X^\top Z + Z^\top X = 0 \}
\end{equation}
Equivalently, $Z \in T_X \St$ if and only if $X^\top Z$ is a skew-symmetric matrix.
\end{lemma}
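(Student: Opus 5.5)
We will realize $\St$ as a level set of a smooth map and identify its tangent space with the kernel of that map's differential. Define
\[
F:\R^{n\times p}\to \Sym(p), \qquad F(X)=X^\top X ,
\]
where $\Sym(p)$ is the space of symmetric $p\times p$ matrices, of dimension $p(p+1)/2$. Then $\St=F^{-1}(I_p)$. The plan has three steps:
\begin{itemize}
\item compute the differential $DF(X)$;
\item show that $I_p$ is a regular value of $F$, so $\St$ is an embedded submanifold with $T_X\St=\ker DF(X)$;
\item read off the characterization from this kernel.
\end{itemize}

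The differential is a direct expansion. For $Z\in\R^{n\times p}$,
\[
F(X+tZ)=X^\top X+t\,(X^\top Z+Z^\top X)+t^2 Z^\top Z,
\]
so $DF(X)[Z]=X^\top Z+Z^\top X$, which is indeed symmetric.

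The main point is surjectivity of $DF(X)$ onto $\Sym(p)$ for every $X\in\St$. Given $S\in\Sym(p)$, take $Z=\tfrac12 XS$. Using $X^\top X=I_p$ and $S^\top=S$,
\[
DF(X)[Z]=\tfrac12 S+\tfrac12 S^\top=S.
\]
So $I_p$ is a regular value, and the regular value (submersion) theorem applies. It gives that $\St$ is an embedded submanifold of dimension $np-p(p+1)/2$, with
\[
T_X\St=\ker DF(X)=\{Z \mid X^\top Z+Z^\top X=0\}.
\]

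Alternatively, one can argue by two inclusions without the regular value theorem, but that route needs a dimension count.
\begin{itemize}
\item \emph{Necessity.} Let $\gamma(t)$ be a curve in $\St$ with $\gamma(0)=X$. Differentiating $\gamma(t)^\top\gamma(t)=I_p$ at $t=0$ gives $X^\top\dot\gamma(0)+\dot\gamma(0)^\top X=0$.
\item \emph{Sufficiency.} For $Z$ satisfying this condition, a realizing curve such as the QR/polar retraction of $X+tZ$ shows that $Z$ is tangent.
\item \emph{Dimension count.} Complete $X$ to an orthonormal basis $[X\ X_\perp]$ of $\R^n$ and write $Z=XA+X_\perp B$. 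The condition becomes $A$ skew-symmetric with $B$ arbitrary, a space of dimension $p(p-1)/2+(n-p)p=np-p(p+1)/2$. This matches the dimension of $\St$.
\end{itemize}

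Finally, the equivalence in the statement is immediate: $X^\top Z+Z^\top X=X^\top Z+(X^\top Z)^\top$ vanishes exactly when $X^\top Z$ is skew-symmetric. The only nontrivial step is surjectivity, and it is settled by the explicit choice $Z=\tfrac12 XS$.
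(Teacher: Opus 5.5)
Your argument is correct, but your main route differs from the paper's. The paper's proof is only your ``necessity'' bullet: it differentiates $\gamma(t)^\top\gamma(t)=I_p$ along a curve through $X$ and concludes $X^\top Z+Z^\top X=0$. That proves only the inclusion $T_X\St\subseteq\{Z \mid X^\top Z+Z^\top X=0\}$. The converse is never addressed, and the value $\dim\St=np-p(p+1)/2$ is later quoted without proof in Proposition~\ref{prop:tangent_so(n)}. Your regular-value argument has one real computation, the surjectivity of $DF(X)$ via $Z=\tfrac12 XS$. From it you get both inclusions at once, together with the embedded-submanifold structure and that dimension formula. The paper's curve computation is shorter and needs no manifold theorem, but it proves only half of the stated equality.

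One small remark on your alternative route. Once sufficiency is realized by an explicit curve, the two inclusions already give equality. A suitable curve is $t\mapsto (X+tZ)(I_p+t^2Z^\top Z)^{-1/2}$: it stays in $\St$ exactly because the linear term of $(X+tZ)^\top(X+tZ)$ vanishes, and its velocity at $t=0$ is $Z$. So the dimension count is superfluous. Matching it against $\dim\St$ would also require knowing that dimension independently, typically from the very regular value theorem that route was meant to avoid.
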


\begin{proof}
Let $\gamma : (-\epsilon, \epsilon) \to \St$ be a differentiable curve such that $\gamma(0) = X$ and $\dot{\gamma}(0) = Z$.
By definition, for all $t$, the curve satisfies the orthogonality constraint:
$$ \gamma(t)^\top \gamma(t) = I_p $$
Differentiating this expression with respect to $t$ and evaluating at $t=0$ yields:
\begin{align*}
    \frac{d}{dt} (\gamma(t)^\top \gamma(t)) \bigg|_{t=0} &= 0 \\
    \dot{\gamma}(0)^\top \gamma(0) + \gamma(0)^\top \dot{\gamma}(0) &= 0 \\
    Z^\top X + X^\top Z &= 0
\end{align*}
This implies that $X^\top Z$ is skew-symmetric, i.e., $X^\top Z \in \so(p)$.
\end{proof}
We can have an even more geometric interpretation of the tangent space with Proposition \ref{prop:tangent_so(n)} stating that the tangent space of $\St$ are the infinitesimal rotation of the columns of $X$.
\begin{proposition}\label{prop:tangent_so(n)}
Let $X \in \St$. The tangent space $T_X \St$ is given by:
\begin{equation}
    T_X \St = \{ ZX \mid Z \in \so(n) \}.
\end{equation}

\end{proposition}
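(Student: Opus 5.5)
We show the two inclusions separately, using the characterization $T_X \St = \{ Z \in \R^{n \times p} \mid X^\top Z + Z^\top X = 0 \}$ from the preceding lemma.

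\emph{Easy inclusion $\supseteq$.} Take $\Omega \in \so(n)$ and set $W = \Omega X$. Then
\[
X^\top W + W^\top X = X^\top \Omega X + X^\top \Omega^\top X = X^\top(\Omega+\Omega^\top)X = 0,
\]
so $W \in T_X\St$. One can also see this geometrically: the curve $t\mapsto e^{t\Omega}X$ stays on $\St$ because $e^{t\Omega}$ is orthogonal, and its velocity at $t=0$ is $\Omega X$.

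\emph{Main inclusion $\subseteq$.} Let $W \in T_X\St$, so that $A := X^\top W$ is skew-symmetric. Complete $X$ to an orthogonal matrix $[X\ X_\perp]\in O(n)$, where $X_\perp \in \R^{n\times(n-p)}$ has orthonormal columns spanning $\Image(X)^\perp$. Then $I_n = XX^\top + X_\perp X_\perp^\top$, and we decompose
\[
W = XA + X_\perp B, \qquad B := X_\perp^\top W .
\]
The candidate generator is
\[
\Omega = XAX^\top + X_\perp B X^\top - X B^\top X_\perp^\top .
\]
\begin{itemize}
\item $\Omega$ is skew-symmetric: the first term is skew because $A$ is, and the last two terms are negatives of each other's transposes.
\item $\Omega X = W$: using $X^\top X = I_p$ and $X_\perp^\top X = 0$, we get $\Omega X = XA + X_\perp B = W$.
\end{itemize}
This construction is the only step requiring an idea. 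A basis-free alternative is $\Omega = W X^\top - X W^\top - X(X^\top W)X^\top$; we would check directly that it is skew and satisfies $\Omega X = W - XW^\top X - XA = W - X(A^\top + A) = W$.

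\emph{Dimension check (optional).} The map $\Omega\mapsto \Omega X$ surjects onto a space of dimension $np - p(p+1)/2$, which agrees with $\dim T_X\St$. This is consistent with the result but not needed, since both inclusions are already proved.
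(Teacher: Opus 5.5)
Your proof is correct, but it reaches the hard inclusion by a different route from the paper. The paper proves $\supseteq$ with the same algebraic check you use. It then gets equality non-constructively: it treats $\phi(Z)=ZX$ as a linear map $\so(n)\to T_X\St$, identifies $\ker\phi=\{Z\in\so(n): ZX=0\}\cong\so(n-p)$ (skew generators acting only on $\Image(X)^\perp$), and applies rank--nullity to get $\dim\Image\phi=\tfrac{n(n-1)}{2}-\tfrac{(n-p)(n-p-1)}{2}=np-\tfrac{p(p+1)}{2}=\dim\St$.

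You instead write down an explicit right inverse of $\phi$, namely $W\mapsto WX^\top-XW^\top-X(X^\top W)X^\top$; your block formula with $X_\perp$ gives the same matrix. Both of your verifications (skew-symmetry and $\Omega X=W$) are correct.

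The two routes buy different things. The paper's route exposes extra structure: the isotropy algebra $\so(n-p)$, i.e.\ the homogeneous-space picture of $\St$. However, it takes $\dim\St=np-p(p+1)/2$ as an input, and its kernel identification is argued only geometrically. Your route needs no dimension count and gives a usable formula for a generator realizing a prescribed tangent vector.

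Your construction only uses that $X^\top W$ is skew. Combined with the curve $t\mapsto e^{t\Omega}X$ from your easy inclusion, it therefore shows that every such $W$ is the velocity of an actual curve in $\St$. That is exactly the converse which the preceding lemma's proof omits, since that proof only shows tangent vectors satisfy the skew condition. So your argument proves the lemma's characterization rather than presupposing it, and $\dim T_X\St$ becomes a consequence rather than an input. To keep this airtight, use the curve argument rather than the algebraic check for $\supseteq$, because the algebraic check already assumes the lemma's equality.
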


\begin{proof}
The proof relies on the transitive action of the orthogonal group $O(n)$ on the Stiefel manifold.

\paragraph{Inclusion ($\supseteq$)}
Consider a matrix of the form $Y = ZX$ where $Z \in \so(n)$. We check if this $X$ satisfies the tangent space condition:
\begin{align*}
    X^\top Y + Y^\top X &= X^\top (ZX) + (ZX)^\top X \\
    &= X^\top Z X + X^\top Z^\top X \\
    &= X^\top (Z + Z^\top) X.
\end{align*}
Since $Z \in \so(n)$, we know that $Z^\top = -Z$ so $X^\top Y + Y^\top X = 0$
and $\{ ZX \mid Z \in \so(n) \} \subseteq T_X \St$.

\paragraph{Surjectivity via Dimensional Analysis}

Consider the linear map $\phi: \so(n) \to T_X \St$ defined by $\phi(Z) = ZX$. We must show that this map is surjective.

The dimension of the Stiefel manifold is:
\begin{equation}
    \dim(\St) = np - \frac{p(p+1)}{2}.
\end{equation}

The kernel of the map $\phi$ is the set of skew-symmetric matrices that vanish on the subspace spanned by $X$:
\begin{equation*}
    \ker(\phi) = \{ Z \in \so(n) \mid ZX = 0 \}.
\end{equation*}
Geometrically, this kernel corresponds to infinitesimal rotations that leave the columns of $X$ invariant. Since $X$ spans a $p$-dimensional subspace, these rotations act only on the orthogonal complement $X^\perp$, which is $(n-p)$-dimensional. Thus, the kernel is isomorphic to $\so(n-p)$.

By the Rank-Nullity Theorem applied to $\phi$:
\begin{align*}
    \dim(\text{Im}(\phi)) &= \dim(\so(n)) - \dim(\ker(\phi)) \\
    &= \dim(\so(n)) - \dim(\so(n-p)) \\
    &= \frac{n(n-1)}{2} - \frac{(n-p)(n-p-1)}{2}.
\end{align*}
Let us simplify this expression:
\begin{align*}
    \dim(\text{Im}(\phi)) &= \frac{1}{2} \left[ (n^2 - n) - ((n-p)^2 - (n-p)) \right] \\
    &= \frac{1}{2} \left[ n^2 - n - (n^2 - 2np + p^2 - n + p) \right] \\
    &= \frac{1}{2} \left[ 2np - p^2 - p \right] \\
    &= np - \frac{p(p+1)}{2}.
\end{align*}
We observe that $\dim(\text{Im}(\phi)) = \dim(T_X \St)$. Since $\text{Im}(\phi)$ is a subspace of $T_X \St$ with the same dimension, they must be equal. This finishes the proof of the Proposition.

\end{proof}

\subsection{The Grassmannian as a Quotient}

%The Grassmann manifold $\Gr$ is the set of all $p$-dimensional subspaces of $\R^n$. It can be viewed as a quotient of the Stiefel manifold. Since any basis of a subspace can be rotated into another orthonormal basis without changing the subspace, we have:
The Grassmann manifold $\Gr$ is the set of all $p$-dimensional subspaces of $\R^n$. It can be viewed as the quotient of the Stiefel manifold $\St$ by the orthogonal group $O(p)$. Since the subspace spanned by a set of basis vectors is invariant under the right action of any orthogonal matrix $Q \in O(p)$ (where $Q^\top Q = I_p$), we identify matrices in the Stiefel manifold that are equivalent up to this action:
\begin{equation}
    \Gr \cong \St / O(p)
\end{equation}
where $O(p)$ is the orthogonal group of size $p \times p$. Two matrices $X, Y \in \St$ represent the same point on the Grassmannian if $Y = XQ$ for some $Q \in O(p)$. 

One can identify the $\Gr$ as the space of orthogonal projections of rank $p$:

\begin{lemma}\label{lem:ident_Gr_proj}
$$\Gr \cong P_{n,p} := \{ P \in \R^{n\times n} : P^2 = P, P^{\top} = P, \text{rank}(P) = p \}$$
\end{lemma}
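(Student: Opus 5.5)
The plan is to define an explicit map on the quotient and show it is a well-defined bijection. Following the description of $\Gr$ as $\St/O(p)$, I set
\[
\pi : \St \to P_{n,p}, \qquad \pi(X) = XX^\top ,
\]
and check that it factors through the quotient. First, $\pi(X)$ lies in $P_{n,p}$. Symmetry is immediate. Idempotence follows from $X^\top X = I_p$:
\[
(XX^\top)^2 = X(X^\top X)X^\top = XX^\top .
\]
For the rank, $\Image(XX^\top) = \Image(X)$, since $XX^\top v = X(X^\top v)$ and $XX^\top X = X$. The $p$ columns of $X$ are orthonormal, hence independent, so the rank is $p$. Second, $\pi$ is constant on $O(p)$-orbits: if $Y = XQ$ with $Q \in O(p)$, then $YY^\top = XQQ^\top X^\top = XX^\top$. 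Hence $\pi$ descends to a map $\bar\pi : \St/O(p) \to P_{n,p}$.

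Surjectivity comes from the spectral theorem. Let $P \in P_{n,p}$. Since $P$ is symmetric it is orthogonally diagonalizable. The relation $P^2 = P$ forces every eigenvalue to lie in $\{0,1\}$, and $\rank P = p$ means exactly $p$ of them equal $1$. Taking an orthonormal basis $x_1,\dots,x_p$ of the $1$-eigenspace and setting $X = [x_1,\dots,x_p] \in \St$, we get $P = XX^\top$.

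Injectivity is the step needing the most care. Suppose $XX^\top = YY^\top$ with $X, Y \in \St$. I put $Q := X^\top Y \in \R^{p\times p}$. Multiplying the identity on the right by $Y$ gives $X(X^\top Y) = Y(Y^\top Y) = Y$, that is, $Y = XQ$. Then
\[
Q^\top Q = Y^\top X X^\top Y = Y^\top Y Y^\top Y = I_p ,
\]
so $Q \in O(p)$, and $X$ and $Y$ lie in the same orbit. Thus $\bar\pi$ is a bijection. Equivalently, $XX^\top$ is the orthogonal projector onto $\Span(X)$, which is consistent with viewing $\Gr$ as the set of $p$-dimensional subspaces.

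Should the statement $\cong$ be meant as a diffeomorphism rather than just a bijection, I would add a further step. The map $\pi$ is smooth, and $P_{n,p}$ is an embedded submanifold of $\Sym(n)$: it is the $O(n)$-orbit of $\diag(I_p,0)$ under conjugation. Then $\bar\pi$ is a smooth bijective immersion between manifolds of the same dimension $p(n-p)$, which makes it a diffeomorphism. The only point to verify is that the differential of $\pi$ has kernel equal to the vertical space $\{X\Omega : \Omega \in \so(p)\}$. This follows from the decomposition of $T_X\St$ given in Proposition~\ref{prop:tangent_so(n)}. I expect this differential check to be the main technical obstacle if smoothness is required; otherwise the injectivity argument above is the crux.
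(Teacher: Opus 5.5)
Your proof is correct and follows the same route as the paper. The map $X \mapsto XX^\top$ descends to $\St/O(p)$, surjectivity comes from the spectral theorem, and injectivity from $XX^\top = YY^\top$; your explicit choice $Q = X^\top Y$, with $Y = XQ$ and $Q^\top Q = I_p$, makes rigorous the paper's shortcut ``$\Image(X) = \Image(Y)$, hence the same point of $\Gr$'', and your check that $XX^\top$ actually lies in $P_{n,p}$ fills in a detail the paper leaves implicit.
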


\begin{proof}
Let $\pi : \St \rightarrow P_{n,p}$ be defined by $\pi(X) = X X^\top$. For all $Q \in O(p)$, $\pi(X Q) = X Q Q^\top X^\top = X X^\top = \pi(X)$, so the map is invariant under the group action. We can therefore define the induced map on the quotient:
$$ \tilde{\pi} : \Gr \rightarrow P_{n,p}. $$
This map is bijective. 
\textbf{Injectivity:} Let $X, Y \in \St$ such that $\pi(X) = \pi(Y)$, i.e., $X X^\top = Y Y^\top$. This implies that $\text{Im}(X) = \text{Im}(Y)$, so $X$ and $Y$ span the same subspace and represent the same point in $\Gr$.
\textbf{Surjectivity:} Follows from the spectral theorem: any rank-$p$ projection matrix $P$ admits a decomposition $P=XX^\top$, where the columns of $X \in \R^{n \times p}$ form an orthonormal basis of $\text{Im}(P)$.
\end{proof}
Then, we endow $\St$ with the canonical Euclidean metric from the ambient space $\R^{n \times p}$:
\begin{equation}
    \langle A, B \rangle := \trace(A^\top B).
\end{equation}

To understand the first order structure of $\Gr$, one needs to characterize the vertical and the horizontal spaces of the fiber bundle $\St$ with base $\Gr$. 

\subsection{Vertical and Horizontal Decomposition}

Due to the quotient structure $\pi: \St \to P_{n,p} \cong \Gr$ with $\pi(X) = X X^{\top}$, we can decompose the tangent space $T_X \St$ into two orthogonal subspaces:
\begin{equation}
    T_X \St = \mathcal{V}_X \oplus \mathcal{H}_X
\end{equation}

\subsubsection{The Vertical Space}

The vertical space $\mathcal{V}_X$ is defined as the kernel of the differential of the projection map $\pi: \St \to \Gr$. Intuitively, these are the tangent directions at $X$ that do not cause any change in the subspace $\pi(X)$.

\begin{theorem}[Vertical Space Characterization]
The vertical space at $X$ is given by:
\begin{equation}
    \mathcal{V}_X = \{ X\Omega \mid \Omega \in \so(p) \}.
\end{equation}
\end{theorem}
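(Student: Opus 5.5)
The plan is to compute the differential of $\pi(X) = XX^\top$ and identify its kernel inside $T_X\St$ directly. For $Z \in T_X\St$, differentiating along a curve gives
\[
D\pi(X)[Z] = ZX^\top + XZ^\top .
\]
So $\mathcal{V}_X = \{ Z \in T_X\St \mid ZX^\top + XZ^\top = 0\}$, and we must show this set equals $\{X\Omega \mid \Omega\in\so(p)\}$.

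For the inclusion $\supseteq$, take $Z = X\Omega$ with $\Omega \in \so(p)$. By the tangent space lemma, $X^\top Z = \Omega$ is skew-symmetric, so $Z \in T_X\St$. Then
\[
ZX^\top + XZ^\top = X\Omega X^\top + X\Omega^\top X^\top = X(\Omega+\Omega^\top)X^\top = 0 .
\]
Alternatively, one can note that $t\mapsto X e^{t\Omega}$ is a curve in the fiber $\{XQ : Q\in O(p)\}$ along which $\pi$ is constant.

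For the inclusion $\subseteq$, which is the only step needing thought, I will decompose an arbitrary $Z\in T_X\St$ as
\[
Z = X(X^\top Z) + (I - XX^\top)Z = X\Omega + Z_\perp ,
\]
where $\Omega = X^\top Z \in \so(p)$ by the tangent space lemma and $X^\top Z_\perp = 0$. Suppose $ZX^\top + XZ^\top = 0$. The $X\Omega$ part contributes zero by the computation above, so $Z_\perp X^\top + X Z_\perp^\top = 0$. Right-multiplying by $X$ and using $X^\top X = I_p$ and $Z_\perp^\top X = 0$ gives $Z_\perp = 0$. Hence $Z = X\Omega$.

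As a consistency check, $\dim\{X\Omega\} = p(p-1)/2 = \dim O(p)$, which is the fiber dimension, and this matches $\dim\St - \dim\Gr = np - p(p+1)/2 - p(n-p)$. Since the direct argument already closes, this check is not needed for the proof. The main subtlety is only making the definition of $\mathcal{V}_X$ as $\ker D\pi(X)$ explicit via the identification of Lemma \ref{lem:ident_Gr_proj}.
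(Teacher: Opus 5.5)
Your proposal is correct and follows essentially the same route as the paper: compute $d\pi(X)[Z] = ZX^\top + XZ^\top$, check that $X\Omega$ lies in its kernel, and for the converse right-multiply the kernel equation by $X$ and use $X^\top X = I_p$. The only cosmetic difference is that you first split $Z = X(X^\top Z) + (I_n - XX^\top)Z$ and show the second part vanishes, whereas the paper substitutes $Z^\top X = -X^\top Z$ directly to obtain $Z = X(X^\top Z)$.
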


\begin{proof}
Recall that the projection is given by $\pi(X) = XX^\top$. The vertical space is formally defined as the kernel of the differential:
$$ \mathcal{V}_X = \ker(d\pi(X)) = \{ Z \in T_X \St \mid d\pi(X)[Z] = 0 \}. $$
We compute the differential of $\pi$ at $X$ in the direction $Z$:
$$ d\pi(X)[Z] = Z X^\top + X Z^\top. $$

\paragraph{Inclusion ($\supseteq$)}
Let $Z = X\Omega$ with $\Omega \in \so(p)$. First, $Z \in T_X \St$ because $X^\top Z = \Omega$ is skew-symmetric. Substituting into the differential:
$$ d\pi(X)[X\Omega] = (X\Omega) X^\top + X (X\Omega)^\top = X (\Omega + \Omega^\top) X^\top. $$
Since $\Omega \in \so(p)$, $\Omega + \Omega^\top = 0$, so $d\pi(X)[Z] = 0$. Thus, $X\Omega \in \mathcal{V}_X$.

\paragraph{Inclusion ($\subseteq$)}
Let $Z \in \mathcal{V}_X$. By definition, $Z$ satisfies two conditions:
\begin{enumerate}
    \item $Z \in T_X \St \implies X^\top Z + Z^\top X = 0$ (skew-symmetry constraint).
    \item $d\pi(X)[Z] = 0 \implies Z X^\top + X Z^\top = 0$ (kernel constraint).
\end{enumerate}
Right-multiplying the kernel constraint equation by $X$:
$$ (Z X^\top + X Z^\top) X = 0 $$
$$ Z (X^\top X) + X (Z^\top X) = 0 $$
Since $X \in \St$, $X^\top X = I_p$. Also, from the tangent space condition, $Z^\top X = -X^\top Z$. Substituting these:
$$ Z I_p + X (-X^\top Z) = 0 $$
$$ Z = X (X^\top Z). $$
Let $\Omega := X^\top Z$. Since $Z \in T_X \St$, we know $\Omega$ is skew-symmetric ($\Omega \in \so(p)$).
The equation above becomes $Z = X\Omega$.
Thus, any vector in the vertical space is of the form $X\Omega$ with $\Omega \in \so(p)$.

This proves $\mathcal{V}_X = \{ X\Omega \mid \Omega \in \so(p) \}$.
\end{proof}

\subsubsection{The Horizontal Space}
The horizontal space $\mathcal{H}_X$ is defined as the orthogonal complement of $\mathcal{V}_X$ within $T_X \St$. This space provides a unique representation for tangent vectors of the Grassmannian.

\begin{theorem}[Horizontal Space Characterization]
The horizontal space at $X$ is given by:
\begin{equation}
    \mathcal{H}_X = \{ Z \in \R^{n \times p} \mid X^\top Z = 0 \}
\end{equation}
Note that the condition $X^\top Z = 0$ implies $X^\top Z + Z^\top X = 0$, so $Z$ is automatically in $T_X \St$.
\end{theorem}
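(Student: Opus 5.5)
The plan is to prove the two inclusions $\{Z : X^\top Z = 0\} \subseteq \mathcal{H}_X$ and $\mathcal{H}_X \subseteq \{Z : X^\top Z = 0\}$, where $\mathcal{H}_X$ is the orthogonal complement of $\mathcal{V}_X$ inside $T_X\St$ for $\langle A,B\rangle = \trace(A^\top B)$. We use the Vertical Space Characterization, $\mathcal{V}_X = \{X\Omega \mid \Omega\in\so(p)\}$, together with the tangent-space lemma: $Z\in T_X\St$ if and only if $X^\top Z\in\so(p)$.

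\emph{Inclusion $\supseteq$.} Let $Z$ satisfy $X^\top Z = 0$. As noted in the statement, $Z\in T_X\St$. For any $\Omega\in\so(p)$,
\[
\langle X\Omega, Z\rangle = \trace(\Omega^\top X^\top Z) = 0 .
\]
Hence $Z\perp\mathcal{V}_X$, and so $Z\in\mathcal{H}_X$.

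\emph{Inclusion $\subseteq$.} Let $Z\in\mathcal{H}_X$, and set $A := X^\top Z$. Since $Z\in T_X\St$, the matrix $A$ is skew-symmetric, so $A\in\so(p)$. Then $XA\in\mathcal{V}_X$, and orthogonality gives
\[
0 = \langle XA, Z\rangle = \trace(A^\top X^\top Z) = \trace(A^\top A) = \|A\|_F^2 .
\]
Therefore $A = X^\top Z = 0$.

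The key step is the choice of test vector $\Omega = X^\top Z$ in the second inclusion. It is admissible precisely because tangency forces $X^\top Z$ to be skew. Everything else is routine.

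As an optional sanity check, one can count dimensions. The space $\{Z : X^\top Z=0\}$ has dimension $(n-p)p$, because each column of $Z$ lies in the $(n-p)$-dimensional orthogonal complement of $\Image(X)$. Adding $\dim\so(p) = p(p-1)/2$ gives $np - p(p+1)/2 = \dim\St$. This confirms that $T_X\St = \mathcal{V}_X\oplus\mathcal{H}_X$ with the stated $\mathcal{H}_X$.
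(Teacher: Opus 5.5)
Your proof is correct and follows essentially the same route as the paper: both reduce horizontality to $\langle X^\top Z, \Omega\rangle = 0$ for all $\Omega \in \so(p)$, then conclude $X^\top Z = 0$ because tangency makes $X^\top Z$ itself skew-symmetric. Your test choice $\Omega = X^\top Z$ just makes explicit the paper's remark that the only skew-symmetric matrix orthogonal to all of $\so(p)$ is zero, and you also write out the easy reverse inclusion, which the paper leaves implicit.
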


\begin{proof}
A tangent vector $Z \in T_X \St$ is horizontal if and only if it is orthogonal to every vertical vector.
$$ Z \in \mathcal{H}_X \iff \forall V \in \mathcal{V}_X, \quad \langle Z, V \rangle = 0 $$
Using the characterization of $\mathcal{V}_X$, this condition becomes:
$$ \forall \Omega \in \so(p), \quad \trace(Z^\top X \Omega) = 0 $$
Let $A = X^\top Z$. Since $Z \in T_X \St$, we know from Lemma 2 that $A$ must be skew-symmetric ($A^\top = -A$).
The condition is rewritten as:
$$ \langle A, \Omega \rangle = 0 \quad \forall \Omega \in \so(p) $$
The only skew-symmetric matrix that is orthogonal to all skew-symmetric matrices is the zero matrix. Therefore, $A = 0$.
$$ X^\top Z = 0 $$
This proves that horizontal vectors are exactly those tangent vectors that are orthogonal to the columns of $X$.
\end{proof}

\begin{remark}\label{rem:ZopMMperp}
The matrix $Z \in \R^{n \times p}$ can be viewed as an operator from $M$ to $M^\perp$.
\end{remark}

\subsection{Characterization of the Grassmannian Tangent Space}

We can now rigorously characterize $T_{\mathcal{U}} \Gr$, where $\mathcal{U} := \text{span}(X)$. To do so, recall the definition of the projection operator $\pi : \St \rightarrow P_{n,p} \cong \Gr$, which we identify with the mapping to the projection matrix $\pi(X) = XX^\top.$

\begin{theorem}[Identification via Horizontal Lift]
There exists a linear isometry between the tangent space of the Grassmannian at $\pi(X)$ and the horizontal space at $X$:
\begin{equation}
    T_{\pi(X)} \Gr \cong \mathcal{H}_X = \{ Z \in \R^{n \times p} \mid X^\top Z = 0 \}
\end{equation}
\end{theorem}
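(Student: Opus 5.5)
The natural route is to restrict the differential of the quotient map to the horizontal space. Using the identification $\Gr \cong P_{n,p}$ of Lemma \ref{lem:ident_Gr_proj}, I view $T_{\pi(X)}\Gr$ as the tangent space to $P_{n,p}$ at $P=XX^\top$. This is a submanifold of symmetric matrices, which I endow with the metric for which the isometry will hold; see the last paragraph. The candidate map is
\[
L := d\pi(X)\big|_{\mathcal{H}_X} : \mathcal{H}_X \to T_{P}P_{n,p}, \qquad L(Z) = ZX^\top + XZ^\top .
\]
This map is linear, and it lands in $T_P P_{n,p}$ because it is the derivative of $\pi$ along curves in $\St$.

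\emph{Injectivity.} Suppose $ZX^\top + XZ^\top = 0$ with $X^\top Z = 0$. Right-multiplying by $X$ gives $Z + X Z^\top X = Z = 0$, so $\ker L = 0$. Equivalently, $\ker d\pi(X) = \mathcal{V}_X$ by the Vertical Space Characterization, and $\mathcal{V}_X \cap \mathcal{H}_X = 0$.

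\emph{Surjectivity.} I would argue by dimension. Using the orthogonal decomposition $T_X\St = \mathcal{V}_X \oplus \mathcal{H}_X$, $d\pi(X)(T_X\St) = L(\mathcal{H}_X)$, and
\[
\dim \mathcal{H}_X = n p - p^2 = p(n-p).
\]
I then need $\dim T_P P_{n,p} = p(n-p)$. To get it, differentiate $P^2 = P$ and $P^\top = P$: tangent vectors $\dot P$ are symmetric and satisfy $\dot P P + P\dot P = \dot P$. In a basis adapted to $\Image(X) \oplus \Image(X)^\perp$ this forces $\dot P$ to be purely off-diagonal, namely $\dot P = X B^\top X_\perp^\top + X_\perp B X^\top$ with $B \in \R^{(n-p)\times p}$. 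That space has dimension $p(n-p)$, and it equals the image of $L$ via $Z = X_\perp B$. This dimension count is the main obstacle: one must know that $P_{n,p}$ is a manifold whose tangent space is exactly this set, not merely contained in it. The explicit parametrization by $Z = X_\perp B$ resolves it, since every such $\dot P$ is realized by the curve $\pi(X + tZ)$ after orthonormalization.

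\emph{Isometry.} The Grassmannian metric is, by definition of the quotient (Riemannian submersion) structure, the one making $L$ an isometry: $\langle \xi,\eta\rangle_{\pi(X)} := \trace(Z_\xi^\top Z_\eta)$ for the horizontal lifts. I would check this is well defined by showing that lifts at $XQ$ are $ZQ$ for $Q \in O(p)$, since differentiating $\pi(XQ) = \pi(X)$ gives $L_{XQ}(ZQ) = L_X(Z)$. Then $\trace(Q^\top Z_\xi^\top Z_\eta Q) = \trace(Z_\xi^\top Z_\eta)$, so the metric is independent of the representative. Alternatively, with the Frobenius metric on $P_{n,p}$ one computes
\[
\|ZX^\top + XZ^\top\|_F^2 = 2\|Z\|_F^2 ,
\]
because the cross terms vanish when $X^\top Z = 0$. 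The map is therefore an isometry up to the constant factor $\sqrt{2}$, which can be absorbed into the normalization of the metric.
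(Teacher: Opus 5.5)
Your proof is correct. Its skeleton matches the paper's: both restrict $d\pi(X)\colon Z \mapsto ZX^\top + XZ^\top$ to $\mathcal{H}_X$ and show that this restriction is a bijection onto the tangent space. The two arguments differ in where surjectivity and the isometry come from.

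The paper invokes the Quotient Manifold Theorem. This makes $\pi$ a Riemannian submersion, so $d\pi(X)$ is onto with kernel $\mathcal{V}_X$, and its restriction to $\mathcal{V}_X^\perp=\mathcal{H}_X$ is an isomorphism. The isometry then holds by the very definition of a Riemannian submersion.

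You instead work in the embedded model $P_{n,p}$ and compute its tangent space by hand. Differentiating $P^2=P$ forces $\dot P$ to be block off-diagonal in the frame $[X, X_\perp]$. The orthonormalized curves through $X+tX_\perp B$ realize every such $\dot P$, so $\Image L \subseteq T_P P_{n,p} \subseteq \Image L$. These two inclusions already give equality, so your dimension count is only a consistency check.

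This route is more elementary and yields the explicit description $T_P P_{n,p}=\{X_\perp B X^\top + X B^\top X_\perp^\top\}$. The cost is that you take for granted two facts: that $P_{n,p}$ is an embedded submanifold, and that the bijection of Lemma \ref{lem:ident_Gr_proj} is a diffeomorphism (the paper only proves it is a bijection). The paper uses this same embedded model implicitly when it writes $d\pi(X)[Z]=ZX^\top+XZ^\top$, so this is not a gap relative to the paper.

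Your handling of the metric is more careful than the paper's in two ways. First, you check that the lift at $XQ$ is $ZQ$, which makes the lifted inner product $O(p)$-invariant; this is exactly what the quotient metric needs in order to be well defined, and the paper leaves it hidden in the phrase ``Riemannian submersion.'' Second, your identity $\|ZX^\top+XZ^\top\|_F^2=2\|Z\|_F^2$ shows that the metric making $\pi$ a Riemannian submersion is one half of the Frobenius metric on $P_{n,p}$. That normalization matters when $\Gr$ is identified with projectors, and the paper's statement leaves it implicit.
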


\begin{proof}
By the Quotient Manifold Theorem, the Grassmannian $\Gr$ admits a smooth manifold structure such that the canonical projection $\pi : \St \to \Gr$ is a Riemannian submersion. 

As a submersion, the differential $d\pi(X) : T_X \St \to T_{\pi(X)} \Gr$ is a surjective linear map. We have previously established that the kernel of this map is exactly the vertical space $\mathcal{V}_X$. 
Since the horizontal space is defined as the orthogonal complement of the vertical space ($\mathcal{H}_X = \mathcal{V}_X^\perp$), the restriction of the differential to this subspace is injective. By the Rank-Nullity Theorem (or the First Isomorphism Theorem applied to the tangent bundle), a surjective linear map restricted to the orthogonal complement of its kernel is an isomorphism:
\begin{equation}
    d\pi(X) \big|_{\mathcal{H}_X} : \mathcal{H}_X \xrightarrow{\sim} T_{\pi(X)} \Gr.
\end{equation}
Consequently, for any tangent vector $\xi \in T_{\pi(X)} \Gr$, there exists a unique matrix $Z \in \mathcal{H}_X$, called the \textit{horizontal lift} of $\xi$, such that $d\pi(X)[Z] = \xi$.
\end{proof}

\begin{remark}
Using the horizontal characterization, we can easily compute the dimension of the manifold.

\begin{corollary}[Dimension]\label{cor:dimension_gr}
The dimension of $\Gr$ is $p(n-p)$.
\end{corollary}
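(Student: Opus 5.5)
The plan is to compute $\dim T_{\pi(X)}\Gr$ through the identification $T_{\pi(X)}\Gr \cong \mathcal{H}_X$ given by the horizontal lift theorem. Because $\Gr$ is a smooth manifold (quotient manifold theorem), its dimension equals that of any tangent space. So it is enough to show that the linear subspace $\mathcal{H}_X = \{ Z \in \R^{n\times p} \mid X^\top Z = 0\}$ has dimension $p(n-p)$.

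First I will describe $\mathcal{H}_X$ explicitly. Complete $X$ to an orthogonal matrix $[X \; X_\perp] \in O(n)$, where $X_\perp \in \R^{n\times(n-p)}$ has orthonormal columns spanning $\Span(X)^\perp$. Every $Z \in \R^{n\times p}$ then decomposes uniquely as $Z = XA + X_\perp B$, with $A = X^\top Z \in \R^{p\times p}$ and $B = X_\perp^\top Z \in \R^{(n-p)\times p}$. The condition $X^\top Z = 0$ is exactly $A = 0$. The map $B \mapsto X_\perp B$ is therefore a linear isomorphism from $\R^{(n-p)\times p}$ onto $\mathcal{H}_X$: it is injective because $X_\perp^\top X_\perp = I_{n-p}$, and surjective by the decomposition. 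This matches Remark~\ref{rem:ZopMMperp}, which views $Z$ as an operator from $\Span(X)$ to its complement. Hence $\dim \mathcal{H}_X = (n-p)p$.

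As an independent check, the same count follows from $T_X\St = \mathcal{V}_X \oplus \mathcal{H}_X$. Using $\dim T_X\St = np - p(p+1)/2$ (from the proof of Proposition~\ref{prop:tangent_so(n)}) and $\dim \mathcal{V}_X = \dim \so(p) = p(p-1)/2$, we get $np - p(p+1)/2 - p(p-1)/2 = np - p^2$. The map $\Omega \mapsto X\Omega$ is injective since $X^\top X = I_p$, which justifies the value of $\dim \mathcal{V}_X$.

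I do not expect a real obstacle. The only delicate point is the existence of the orthonormal completion $X_\perp$, which follows from Gram--Schmidt, and the observation that the manifold dimension equals the tangent space dimension. Both are standard.
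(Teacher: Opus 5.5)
Your proof is correct and follows the paper's route. Both identify $T_{\pi(X)}\Gr$ with $\mathcal{H}_X$ and then show $\dim \mathcal{H}_X = p(n-p)$. The paper counts the $p^2$ constraints in $X^\top Z = 0$ and asserts they are independent because $\rank X = p$; your explicit isomorphism $B \mapsto X_\perp B$ from $\R^{(n-p)\times p}$ proves that independence directly, and your cross-check via $\dim T_X\St - \dim \mathcal{V}_X$ is also sound.
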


\begin{proof}
The space $\mathcal{H}_X$ consists of matrices $Z \in \R^{n \times p}$ satisfying the linear constraint $X^\top Z = 0_{p \times p}$.
Since $X$ has rank $p$, the mapping $Z \mapsto X^\top Z$ imposes $p \times p = p^2$ independent linear constraints on the $np$ entries of $Z$.
\begin{align*}
    \dim(\Gr) &= \dim(\mathcal{H}_X) \\
    &= \dim(\R^{n \times p}) - \text{number of constraints} \\
    &= np - p^2 \\
    &= p(n-p)
\end{align*}
\end{proof}

\end{remark}

\begin{proposition}\label{prop:proj_tan}
Let $M \in \R^{n\times p}$, the orthogonal projection of $M$ on $T_{\pi(X)} \Gr$ is given by $(I_n - X X^{\top}) M $.
\end{proposition}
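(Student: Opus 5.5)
Through the horizontal lift, $T_{\pi(X)}\Gr$ is identified with $\mathcal{H}_X = \{ Z \in \R^{n\times p} \mid X^\top Z = 0\}$, equipped with the ambient inner product $\langle A,B\rangle = \trace(A^\top B)$. So the statement reduces to computing the orthogonal projection of an arbitrary $M \in \R^{n\times p}$ onto the linear subspace $\mathcal{H}_X$ of $\R^{n\times p}$. The candidate is $P_X(M) := (I_n - XX^\top)M$. Since $\mathcal{H}_X$ is a linear subspace of a finite-dimensional inner product space, it suffices to check two properties: (i) $P_X(M) \in \mathcal{H}_X$, and (ii) $M - P_X(M) \perp \mathcal{H}_X$. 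Uniqueness of the orthogonal decomposition then gives the claim.

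For (i), I would compute $X^\top (I_n - XX^\top) M = (X^\top - X^\top X X^\top) M = 0$, using $X^\top X = I_p$ because $X \in \St$.

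For (ii), note that $M - P_X(M) = XX^\top M$. For any $Z \in \mathcal{H}_X$,
\[
\langle Z, XX^\top M\rangle = \trace(Z^\top X X^\top M) = \trace\big((X^\top Z)^\top X^\top M\big) = 0,
\]
since $X^\top Z = 0$. This shows $M = P_X(M) + XX^\top M$ is the orthogonal decomposition with respect to $\mathcal{H}_X \oplus \mathcal{H}_X^\perp$.

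As an optional check, I would also verify idempotence and self-adjointness: $(I_n - XX^\top)^2 = I_n - XX^\top$, again using $X^\top X = I_p$, and
\[
\langle (I_n - XX^\top)A, B\rangle = \langle A, (I_n - XX^\top)B\rangle,
\]
because $I_n - XX^\top$ is symmetric.

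There is no real obstacle here; the computation is immediate once the identification is fixed. The only point needing care is interpretive. The statement projects onto $T_{\pi(X)}\Gr$ viewed through its horizontal representative $\mathcal{H}_X$ at the chosen $X$, with the metric inherited from $\R^{n\times p}$. The preceding theorem says this identification is an isometry, so the projection is well defined in that sense.
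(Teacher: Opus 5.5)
Your proof is correct and follows the paper's argument: both show that $(I_n - XX^\top)M$ lies in $\mathcal{H}_X$ using $X^\top X = I_p$, and that the remainder $XX^\top M$ is orthogonal to every horizontal $Z$, because $\trace(Z^\top XX^\top M)$ vanishes when $X^\top Z = 0$. The extra check of idempotence and self-adjointness is harmless but not needed.
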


\begin{proof}
On the one hand as $X^{\top} X = I_p$, $X^{\top}(I_n - X X^{\top}) M = X^{\top} M - X^{\top} M = 0$ so $(I_n - X X^{\top}) M $ belongs to $T_{\pi(X)} \Gr$. 
On the other hand for all $Z \in T_{\pi(X)} \Gr$,  $Tr(Z^{\top} X X^{\top} M) = Tr(M^{\top} X X^{\top} Z) = 0$.
\end{proof}

A global picture of the geometry is given in Figure \ref{fig:synth_geom}.

\begin{figure}
\centering
\begin{tikzpicture}[
    scale=1.2,
    >={Latex[length=3mm, width=2mm]},
    font=\small
]

    % --- 1. ESPACE DE BASE (La variété M / Grassmannienne) ---
    \begin{scope}[yshift=-4cm]
        % Surface courbe (représentation locale du manifold)
        \fill[gray!10] (-3,0) to[out=20,in=160] (3,0) to[out=-20,in=200] (3,-2) to[out=160,in=20] (-3,-2) -- cycle;
        \draw[gray!60, thick] (-3,0) to[out=20,in=160] (3,0); % Bord arrière
        \draw[black, thick] (-3,0) to[out=-20,in=200] (-3,-2) to[out=20,in=160] (3,-2) to[out=200,in=-20] (3,0); % Bords
        
        % Point projeté U
        \coordinate (U) at (0, -1);
        \fill[black] (U) circle (2pt);
        \node[below right] at (U) {$\mathcal{U} = \pi(X)$};
        \node[gray, above right] at (2.5, -0.5) {Base $\Gr$};
        
        % Vecteur tangent sur la base
        \draw[->, blue, thick] (U) -- (1.5, -0.8) node[right] {$d\pi(X)_{|\mathcal{H}_X}$};
    \end{scope}

    % --- 2. LA FIBRE (Ligne courbe verticale) ---
    \draw[thick, dashed, gray] (0, -4) .. controls (-0.5, -2) and (0.5, 0) .. (0, 2);
    \node[left, gray] at (-0.3, 1.5) {Fiber $\pi^{-1}(\mathcal{U})$};

    % --- 3. ESPACE TOTAL (Autour de X) ---
    \coordinate (X) at (0, 0); 
    \fill[black] (X) circle (2pt) node[above left] {$X$};

    % --- 4. ESPACES TANGENTS ---
    
    % A. Espace Vertical (Tangent à la fibre)
    \draw[red, thick, ->] (X) -- (0.2, 1.2) node[above] {$\mathcal{V}_X$};
    \draw[red, thick, ->] (X) -- (-0.15, -0.9);
    
    % B. Espace Horizontal (Orthogonal à la fibre)
    \fill[blue!20, opacity=0.6] (-1.5, -0.5) -- (1.5, -0.5) -- (2.0, 0.8) -- (-1.0, 0.8) -- cycle;
    \draw[blue!50] (-1.5, -0.5) -- (1.5, -0.5) -- (2.0, 0.8) -- (-1.0, 0.8) -- cycle;
    
    % Vecteur Horizontal
    \draw[blue, thick, ->] (X) -- (1.5, 0.2) node[right] {$\mathcal{H}_X$};

    % --- 5. FLÈCHE DE PROJECTION ---
    \draw[->, thick, dashed] (0.5, -0.5) to[out=-60, in=60] node[midway, right] {$\pi$} (0.2, -3.8);

    % --- 6. ANNOTATIONS ---
    \node[right] at (2.5, 1) {Total space $\St$};
    
    % Symbole orthogonalité
    \draw[thin] (0.1, 0.4) -- (0.3, 0.35) -- (0.2, -0.05);

\end{tikzpicture}
\caption{The fiber bundle $\St$}
\label{fig:synth_geom}
\end{figure}
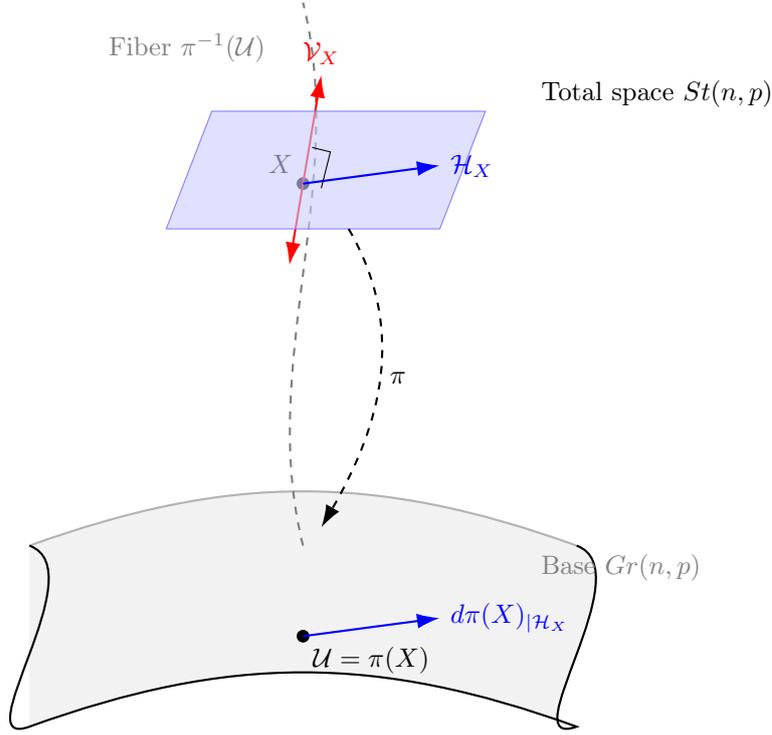
\subsection{Geodesics and Second-Order Approximation}

In this section, we provide a rigorous characterization of the geodesics on $\Gr$ and derive their second-order Taylor expansion. This expansion is critical for analyzing the convergence of Newton-like algorithms, as it reveals the local curvature of the manifold.

\subsubsection{Geodesics via Horizontal Lifts}

Since the Grassmannian is a quotient manifold of the Stiefel manifold ($\Gr \cong \St / O(p)$), the geometry of $\Gr$ is best understood through the theory of Riemannian submersions. A fundamental result \cite[Proposition 2.109]{gallot2004riemannian} states that geodesics on the base manifold $\Gr$ are the projections of specific geodesics on the total space $\St$, known as \textbf{horizontal geodesics}.

\begin{definition}
A curve $Y(t)$ in $\St$ is an horizontal geodesic if it satisfies two conditions:
\begin{enumerate}
    \item It is a geodesic with respect to the canonical metric on $\St$ (i.e., zero covariant acceleration).
    \item Its velocity vector is horizontal everywhere: $\dot{Y}(t) \in \mathcal{H}_{Y(t)}$ for all $t$.
\end{enumerate}
\end{definition}
The projection $\pi(Y(t))$ is then a geodesic on $\Gr$. The following theorem gives the closed-form expression for this lift.

\begin{theorem}[Horizontal Geodesic in $\St$]
Let $X \in \St$ be a base point and $H \in \mathcal{H}_X$ be a horizontal tangent vector (satisfying $X^\top H = 0$). Let the compact Singular Value Decomposition (SVD) of $H$ be:
$$H = U \Sigma V^\top$$
where $U \in \R^{n \times p}$ has orthonormal columns (and $U^\top X = 0$), $\Sigma = \diag(\sigma_1, \dots, \sigma_p) \succeq 0$, and $V \in O(p)$ is orthogonal.

The curve $Y: \R \to \R^{n \times p}$ defined by:
\begin{equation} \label{eq:geodesic}
    Y(t) = X V \cos(t\Sigma) V^\top + U \sin(t\Sigma) V^\top
\end{equation}
is the unique horizontal geodesic in $\St$ passing through $X$ at $t=0$ with velocity $\dot{Y}(0) = H$.
Consequently, the curve $\gamma(t) = \text{span}(Y(t))$ is the geodesic on $\Gr$ starting at $\pi(X)$ with velocity represented by $H$.
\end{theorem}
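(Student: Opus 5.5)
The plan is to verify directly that the explicit curve \eqref{eq:geodesic} has the required properties, and then to invoke uniqueness of geodesics together with the Riemannian submersion result \cite[Proposition 2.109]{gallot2004riemannian}. First I check the initial conditions. At $t=0$ we get $Y(0)=XVV^\top=X$. Differentiating gives $\dot Y(t) = -XV\Sigma\sin(t\Sigma)V^\top + U\Sigma\cos(t\Sigma)V^\top$, hence $\dot Y(0)=U\Sigma V^\top=H$. Next I check that $Y(t)\in\St$ for all $t$. Using $X^\top X=I_p$, $U^\top U=I_p$ and $U^\top X=0$, we have
\begin{equation*}
Y^\top Y = V\left(\cos^2(t\Sigma)+\sin^2(t\Sigma)\right)V^\top = I_p .
\end{equation*}
One caveat concerns rank-deficient $H$: if some $\sigma_i=0$, the corresponding column of $U$ is not determined by the SVD. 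I will choose it orthonormal and orthogonal to $X$, which is possible since $2p\le n$ is needed for a full $U$. Otherwise I restrict to the thin SVD over the nonzero singular values, where the formula is unchanged because those columns only enter through $\sin(0)=0$.

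Second, I check horizontality of the velocity at every time, i.e.\ $Y(t)^\top\dot Y(t)=0$. Expanding with the same orthogonality relations, the cross terms vanish. The two remaining terms are $-V\cos(t\Sigma)\Sigma\sin(t\Sigma)V^\top$ and $V\sin(t\Sigma)\Sigma\cos(t\Sigma)V^\top$, and they cancel because diagonal matrices commute. So $\dot Y(t)\in\mathcal H_{Y(t)}$ by the horizontal space characterization.

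Third, I show $Y$ is a geodesic of $\St$ with the Euclidean induced metric, meaning the acceleration $\ddot Y$ is normal to $T_Y\St$. Computing, $\ddot Y(t)=-(XV\cos(t\Sigma)+U\sin(t\Sigma))\Sigma^2V^\top = -Y(t)\,V\Sigma^2V^\top$. This is of the form $Y S$ with $S$ symmetric. For any $Z\in T_Y\St$ we have $\langle Z, YS\rangle=\trace(Z^\top Y S)=0$, since $Z^\top Y$ is skew-symmetric and $S$ is symmetric. So the tangential part of the acceleration vanishes and $Y$ is a geodesic.

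I expect the main obstacle to be this step, specifically justifying that the normal space of $\St$ at $Y$ is $\{YS : S\in\Sym(p)\}$, equivalently that the Levi-Civita connection is the projected ambient derivative. This follows from the tangent space lemma by dimension count: $\{YS\}$ has dimension $p(p+1)/2$, which matches the codimension of $\St$ in $\R^{n\times p}$. Uniqueness then follows from uniqueness of geodesics with given initial point and velocity (ODE theory). Finally, since $\pi$ is a Riemannian submersion and $Y$ is a horizontal geodesic, the cited Proposition gives that $\gamma(t)=\operatorname{span}(Y(t))=\pi(Y(t))$ is a geodesic of $\Gr$. Its initial velocity is $d\pi(X)[H]$, whose horizontal lift is $H$ by the horizontal-lift identification theorem.
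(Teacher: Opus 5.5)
Your proposal is correct and follows essentially the same route as the paper. Both verify $Y(t)^\top Y(t)=I_p$, then $Y(t)^\top\dot Y(t)=0$, then that $\ddot Y(t)=-Y(t)V\Sigma^2V^\top$ lies in the normal space $\{YS : S=S^\top\}$; your additional points fill in details the paper leaves implicit: the initial conditions, uniqueness via the geodesic ODE, the trace argument for normality, and the treatment of zero singular values, where the paper's inference $X^\top H=0\Rightarrow X^\top U=0$ only holds for columns with $\sigma_i\neq 0$.
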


\begin{proof}
We need to verify the following three conditions:
\begin{enumerate}
    \item $Y(t) \in \St$ for all $t$ (feasibility).
    \item $\dot{Y}(t) \in \mathcal{H}_{Y(t)}$ for all $t$ (horizontality).
    \item $\nabla^{\St}_{\dot{Y}} \dot{Y} = 0$ (zero covariant acceleration).
\end{enumerate}

\paragraph{1. Feasibility ($Y(t) \in \St$).}
We verify $Y(t)^\top Y(t) = I_p$. Note that $X^\top H = 0$ implies $X^\top U \Sigma V^\top = 0$, and thus $X^\top U = 0$.
\begin{align*}
    Y(t)^\top Y(t) &= \left( V \cos(t\Sigma) V^\top X^\top + V \sin(t\Sigma) U^\top \right) \left( X V \cos(t\Sigma) V^\top + U \sin(t\Sigma) V^\top \right) \\
    &= V \cos(t\Sigma) \underbrace{V^\top X^\top X V}_{I_p} \cos(t\Sigma) V^\top 
     + V \sin(t\Sigma) \underbrace{U^\top U}_{I_p} \sin(t\Sigma) V^\top \\
    &= V (\cos^2(t\Sigma) + \sin^2(t\Sigma)) V^\top = V I_p V^\top = I_p.
\end{align*}

\paragraph{2. Horizontality ($Y(t)^\top \dot{Y}(t) = 0$).}
Differentiating $Y(t)$, we get:
$$ \dot{Y}(t) = -X V \Sigma \sin(t\Sigma) V^\top + U \Sigma \cos(t\Sigma) V^\top. $$
We compute the inner product:
\begin{align*}
    Y^\top \dot{Y} &= \left( V \cos(t\Sigma) V^\top X^\top + V \sin(t\Sigma) U^\top \right) 
                      \left( -X V \Sigma \sin(t\Sigma) V^\top + U \Sigma \cos(t\Sigma) V^\top \right) \\
                   &= - V \cos(t\Sigma) \Sigma \sin(t\Sigma) V^\top + V \sin(t\Sigma) \Sigma \cos(t\Sigma) V^\top.
\end{align*}
Since diagonal matrices commute, the two terms cancel out, so $Y(t)^\top \dot{Y}(t) = 0$.

\paragraph{3. Zero Covariant Acceleration.}
We compute the Euclidean acceleration $\ddot{Y}(t)$:
\begin{align*}
    \ddot{Y}(t) &= -X V \Sigma^2 \cos(t\Sigma) V^\top - U \Sigma^2 \sin(t\Sigma) V^\top \\
    &= - \left( X V \cos(t\Sigma) V^\top + U \sin(t\Sigma) V^\top \right) V \Sigma^2 V^\top \\
    &= - Y(t) (V \Sigma^2 V^\top).
\end{align*}
The matrix $S := V \Sigma^2 V^\top$ is symmetric. The normal space to $\St$ (in $\R^{n \times p}$) at $Y$ is $\mathcal{N}_Y \St := \{ Y S \mid S = S^\top \}$ (recall that the tangent space of $\St$ at $Y$ is $\so(n) Y$). Thus, $\ddot{Y}(t)$ lies entirely in the normal space. The covariant acceleration is the projection of $\ddot{Y}(t)$ onto the tangent space, which is therefore zero.
\end{proof}

\subsubsection{Second-Order Taylor Expansion}

We now derive the second-order approximation of the geodesic to understand the local deviation from the tangent space.

\begin{proposition}[Retraction at Order 2]
The Taylor expansion of the horizontal geodesic $Y(t)$ around $t=0$ is:
\begin{equation}
    Y(t) = X + tH - \frac{t^2}{2} X H^\top H + O(t^3)
\end{equation}
where $H:= \dot{Y}(0)$.
\end{proposition}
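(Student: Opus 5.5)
We expand the closed-form horizontal geodesic \eqref{eq:geodesic} from the preceding theorem directly, with $H = U\Sigma V^\top$ its compact SVD. The matrix functions $\cos(t\Sigma)$ and $\sin(t\Sigma)$ act entrywise on the diagonal and are entire in $t$. Their Taylor expansions are
\begin{align*}
\cos(t\Sigma) &= I_p - \tfrac{t^2}{2}\Sigma^2 + O(t^4), \\
\sin(t\Sigma) &= t\Sigma + O(t^3).
\end{align*}
The remainders are uniform because $\Sigma$ is a fixed finite matrix. Substituting into $Y(t) = XV\cos(t\Sigma)V^\top + U\sin(t\Sigma)V^\top$ gives
\[
Y(t) = XVV^\top - \tfrac{t^2}{2} XV\Sigma^2V^\top + tU\Sigma V^\top + O(t^3).
\]

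Next we identify each term. Since $V \in O(p)$, we have $VV^\top = I_p$, so the constant term is $X$. The linear term is $U\Sigma V^\top = H$. For the quadratic term we use $U^\top U = I_p$ to compute
\[
H^\top H = V\Sigma U^\top U \Sigma V^\top = V\Sigma^2 V^\top,
\]
so $XV\Sigma^2V^\top = XH^\top H$. This yields $Y(t) = X + tH - \tfrac{t^2}{2}XH^\top H + O(t^3)$.

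As a cross-check that avoids the SVD, note the proof of the geodesic theorem showed $\ddot Y(t) = -Y(t)\,V\Sigma^2V^\top$. Hence $\ddot Y(0) = -XH^\top H$, together with $Y(0)=X$ and $\dot Y(0)=H$. Taylor's theorem for the smooth curve $Y$ then gives the same expansion, with an $O(t^3)$ remainder controlled by $\sup|\dddot Y|$ on a neighborhood of $0$.

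The argument is purely computational, so there is no real obstacle. The one point needing care is the compact SVD: it must have $U$ with exactly $p$ orthonormal columns and $V$ square orthogonal, so that $VV^\top = I_p$ and $U^\top U = I_p$ both hold even when some $\sigma_i = 0$. The earlier theorem assumes exactly this form. Alternatively, the ODE-based derivation sidesteps the issue entirely.
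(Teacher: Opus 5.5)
Your proposal is correct and matches the paper's proof. The paper computes $\dot Y(0)=H$ and $\ddot Y(0)=-XV\Sigma^2V^\top=-XH^\top H$ from the closed form and then applies Taylor's theorem, which is exactly your ODE-based cross-check; your main route of substituting the series for $\cos(t\Sigma)$ and $\sin(t\Sigma)$ is the same computation carried out term by term.
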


\begin{proof}
We use the expression of the derivatives computed above at $t=0$.
For the velocity:
$$ \dot{Y}(0) = -0 + U \Sigma I_p V^\top = U \Sigma V^\top = H. $$
For the acceleration:
$$ \ddot{Y}(0) = -Y(0) V \Sigma^2 V^\top = -X (V \Sigma V^\top) (V \Sigma V^\top). $$
Recall that $H = U \Sigma V^\top$, so $H^\top H = V \Sigma U^\top U \Sigma V^\top = V \Sigma^2 V^\top$.
Substituting this into the acceleration term yields:
$$ \ddot{Y}(0) = -X (H^\top H). $$
By Taylor's theorem, $Y(t) = Y(0) + t\dot{Y}(0) + \frac{t^2}{2}\ddot{Y}(0) + O(t^3)$, which gives the result.
\end{proof}

\begin{remark}
The term $- \frac{1}{2} X H^\top H$ corresponds to the normal component of the acceleration required to keep the curve on the manifold. A mapping $R_X(tH)$ that agrees with this expansion up to order 2 is called a \textbf{second-order retraction}.
\end{remark}

\section{The Variable Projection Functional}\label{sec:VarPro}

We now turn our attention to the separable nonlinear least squares problem. Let $y \in \R^n$ be a target vector. We assume our model function depends on two sets of parameters: linear parameters $\alpha \in \R^p$ and nonlinear parameters that define a basis matrix.

\subsection{Problem Formulation}

The standard separable least squares problem is given by:
\begin{equation} \label{eq:standard_ls}
    \min_{X \in \St, \alpha \in \R^p} \frac{1}{2} \| y - X \alpha \|_2^2
\end{equation}
where the matrix $X$ represents the nonlinear dependence. In the context of the VarPro algorithm, we eliminate the linear parameters $\alpha$ by observing that for a fixed $X$, the optimal $\hat{\alpha}$ is the solution to a linear least squares problem:
\begin{equation}
    \hat{\alpha}(X) = X^\dagger y = (X^\top X)^{-1} X^\top y = X^\top y
\end{equation}
where the simplification holds because $X \in \St$. Substituting this back into \eqref{eq:standard_ls}, we obtain the reduced cost function depending only on $X$:
\begin{equation}
    f(X) = \frac{1}{2} \| y - X X^\top y \|_2^2 = \frac{1}{2} \| (I_n - P_\mathcal{U}) y \|_2^2 = \frac{1}{2} (\| y \|_2^2 - \|P_\mathcal{U} y \|_2^2)
\end{equation}
where $P_\mathcal{U} := XX^\top$ is the orthogonal projector onto the subspace spanned by $X$ denoted $\mathcal{U}$.

This is equivalent to solving the following problem where we use the isomorphism given by Lemma \ref{lem:ident_Gr_proj}:
\begin{equation}\label{eq:varpro_cost}
    \min_{\mathcal{U} \in \Gr} \frac{1}{2} \| (I - P_\mathcal{U}) y \|_2^2 = \min_{\mathcal{X} \in \St} \frac{1}{2} \|(I_n - X X^{\top}) y \|_2^2.
\end{equation}

\subsection{Riemannian Gradient}

To apply gradient descent methods on the manifold to minimize the cost function $F(\mathcal{U}) = - \frac{1}{2} \| P_\mathcal{U} y \|_2^2$, we require the Riemannian gradient. We define the representative function on the Stiefel manifold as:
\begin{equation}
    \tilde{F}(X) = -\frac{1}{2} \trace(y^\top X X^\top y) = -\frac{1}{2} \trace(X^\top y y^\top X).
\end{equation}

\begin{proposition}[Euclidean Gradient]
The gradient of $\tilde{F}$ in the ambient Euclidean space $\R^{n \times p}$ is:
\begin{equation}
    \nabla \tilde{F}(X) = - y y^\top X.
\end{equation}
\end{proposition}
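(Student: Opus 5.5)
The plan is to compute the directional (Fréchet) derivative of $\tilde{F}$ at an arbitrary $X \in \R^{n\times p}$ along a direction $E \in \R^{n\times p}$, and then identify the gradient through the Frobenius inner product $\langle A, B\rangle = \trace(A^\top B)$ fixed earlier. Since $\tilde{F}$ is a polynomial (a quadratic form) in the entries of $X$, it is smooth on the whole ambient space. The Stiefel constraint therefore plays no role here: it only becomes relevant later, when this gradient is projected onto the horizontal space.

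First I will set $A := yy^\top$, which is symmetric positive semidefinite, and write $\tilde{F}(X) = -\tfrac12 \trace(X^\top A X)$. Then I expand $\tilde{F}(X+tE)$ for $t \in \R$:
\begin{equation*}
\trace\big((X+tE)^\top A (X+tE)\big) = \trace(X^\top A X) + t\,\trace(E^\top A X) + t\,\trace(X^\top A E) + t^2\,\trace(E^\top A E).
\end{equation*}
Taking the coefficient of $t$ gives the directional derivative
\begin{equation*}
D\tilde{F}(X)[E] = -\tfrac12\big(\trace(E^\top A X) + \trace(X^\top A E)\big).
\end{equation*}

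The only step requiring a moment's care is merging the two linear terms. I will use the invariance of the trace under transposition, $\trace(X^\top A E) = \trace(E^\top A^\top X)$, together with the symmetry $A^\top = A$. This shows the two terms coincide, so
\begin{equation*}
D\tilde{F}(X)[E] = -\trace(E^\top A X) = \langle -yy^\top X, E\rangle .
\end{equation*}
By the Riesz characterization of the Euclidean gradient, $D\tilde{F}(X)[E] = \langle \nabla\tilde{F}(X), E\rangle$ for all $E$. It follows that $\nabla \tilde{F}(X) = -yy^\top X$.

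I do not expect any real obstacle; the one point to state explicitly is that the symmetry of $yy^\top$ is what produces the factor cancelling the $\tfrac12$. As a cross-check, I can apply the standard identity $\nabla_X \trace(X^\top A X) = (A+A^\top)X$ directly, which gives the same result.
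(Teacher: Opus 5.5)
Your proposal is correct and follows the paper's own argument: both compute the directional derivative $D\tilde{F}(X)[Z] = -\tfrac12\trace(Z^\top yy^\top X + X^\top yy^\top Z)$, merge the two trace terms using trace invariance under transposition and the symmetry of $yy^\top$, and identify $\nabla\tilde{F}(X) = -yy^\top X$ through the Frobenius inner product. Your version spells out the expansion of $\tilde{F}(X+tE)$ and the transpose step, which the paper leaves implicit.
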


\begin{proof}
Let $Z \in \R^{n \times p}$ be a perturbation. The directional derivative is:
\begin{align*}
    D\tilde{F}(X)[Z] &= -\frac{1}{2} \trace(Z^\top y y^\top X + X^\top y y^\top Z) \\
    &= -\trace(Z^\top y y^\top X) \\
    &= \langle -y y^\top X, Z \rangle.
\end{align*}
\end{proof}

To obtain the Riemannian gradient on the Grassmannian, we project the Euclidean gradient onto the horizontal space $\mathcal{H}_X = \{ Z \in \R^{n \times p} \mid X^\top Z = 0 \}$.

\begin{theorem}[Riemannian Gradient Formula]\cite[Section 3.6]{Absil2008}
  The Riemannian gradient of the cost function, denoted $\mathrm{grad} F(X)$, is the horizontal lift of the gradient on the manifold. It is given by:
\begin{equation} \label{eq:riemannian_grad}
    \mathrm{grad} F(X) = (I_n - XX^\top) \nabla \tilde{F}(X) = -(I_n - XX^\top) y y^\top X.
\end{equation}
\end{theorem}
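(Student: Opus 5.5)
The plan has three steps: reduce the Riemannian gradient on $\Gr$ to a horizontal vector at $X$, compute that vector by projecting the Euclidean gradient, and check that the result is well defined on the quotient.

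\textbf{Step 1: reduction to the horizontal space.} The Grassmannian carries the quotient metric that makes $\pi:\St\to\Gr$ a Riemannian submersion. By the identification via horizontal lift, $d\pi(X)|_{\mathcal{H}_X}$ is a linear isometry onto $T_{\pi(X)}\Gr$. Set $\tilde F = F\circ\pi$. The Riemannian gradient $\mathrm{grad}\,F(\pi(X))$ is the unique tangent vector $\xi$ with $\langle \xi,\eta\rangle = DF(\pi(X))[\eta]$ for all $\eta$. Write $\eta = d\pi(X)[Z]$ with $Z\in\mathcal{H}_X$. By the chain rule $DF(\pi(X))[\eta]=D\tilde F(X)[Z]$. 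It follows that the horizontal lift $\bar\xi$ of the gradient is the unique element of $\mathcal{H}_X$ satisfying
\[
\langle \bar\xi, Z\rangle = D\tilde F(X)[Z] \qquad \text{for all } Z\in\mathcal{H}_X.
\]

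\textbf{Step 2: computing the lift.} By the Euclidean gradient proposition,
\[
D\tilde F(X)[Z]=\langle \nabla\tilde F(X), Z\rangle = \langle -yy^\top X, Z\rangle.
\]
So $\bar\xi$ is the element of $\mathcal{H}_X$ that represents the restriction of $\langle \nabla\tilde F(X),\cdot\rangle$ to $\mathcal{H}_X$. This element is the orthogonal projection of $\nabla\tilde F(X)$ onto $\mathcal{H}_X$. Proposition~\ref{prop:proj_tan} gives this projection as $(I_n-XX^\top)\nabla\tilde F(X)$. Indeed, $\nabla\tilde F - (I-XX^\top)\nabla\tilde F = XX^\top\nabla\tilde F$ is orthogonal to every $Z$ with $X^\top Z=0$. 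Substituting the Euclidean gradient yields $-(I_n-XX^\top)yy^\top X$.

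\textbf{Step 3: consistency on the quotient.} The formula must represent a single tangent vector on $\Gr$, independent of the representative $X$. Replace $X$ by $XQ$ with $Q\in O(p)$. The projector $XX^\top$ is unchanged, so the expression becomes $\bar\xi Q$. This is exactly how horizontal lifts transform along the fiber, since $\pi(XQ)=\pi(X)$ and $d\pi(XQ)[ZQ]=d\pi(X)[Z]$.

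\textbf{Main obstacle.} The only delicate point is Step 1: the quotient metric on $\Gr$ is defined so that $d\pi$ restricted to $\mathcal{H}_X$ is an isometry. This is the Riemannian submersion property already invoked in the horizontal lift theorem, so I would simply cite it. Once that is granted, the rest is the projection computation above.
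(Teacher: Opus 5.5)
Your proposal is correct and follows the paper's proof, whose entire content is your Step 2: project the Euclidean gradient $-yy^\top X$ onto $\mathcal{H}_X$ using Proposition~\ref{prop:proj_tan}. Your Step 1 (Riesz representation through the isometry $d\pi(X)|_{\mathcal{H}_X}$) supplies the justification that the paper delegates to its citation of Absil et al., and your Step 3 is a harmless consistency check that already follows from Step 1, since the gradient on $\Gr$ is defined intrinsically.
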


\begin{proof}
By Proposition \ref{prop:proj_tan}, the orthogonal projection of a matrix $A$ onto $\mathcal{H}_X$ is given by $P_{\mathcal{H}_X}(A) = (I_n - XX^\top) A$. Applying this to $\nabla \tilde{F}(X)$:
$$
\mathrm{grad} F(X) = (I_n - XX^\top) (-y y^\top X) 
$$

\end{proof}

\subsection{Riemannian Hessian Analysis}

To analyze the local convergence of gradient descent methods, we must compute the Riemannian Hessian. The Riemannian Hessian at $X$ along a direction $H \in \mathcal{H}_X$ is given by:
\begin{equation}
    \mathrm{Hess} F(X)[H] = P_{\mathcal{H}_X} \left( D(\mathrm{grad} F(X))[H] \right),
\end{equation}
where $D(\cdot)[H]$ denotes the classical Euclidean directional derivative.

\begin{theorem}[Riemannian Hessian Formula]\cite[Section 5.5]{Absil2008}
Let $X \in \St$ and $H \in \mathcal{H}_X$. Let $M = y y^\top$. The Riemannian Hessian of the VarPro objective on the manifold $\Gr$ is:
\begin{equation} \label{eq:hessian_formula}
    \mathrm{Hess} F(X)[H] = -(I_n - XX^\top) M H + H (X^\top M X).
\end{equation}
\end{theorem}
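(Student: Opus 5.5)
The plan is to start from the formula just stated for the Riemannian Hessian, $\mathrm{Hess} F(X)[H] = P_{\mathcal{H}_X}\left( D(\mathrm{grad} F(X))[H] \right)$, with $P_{\mathcal{H}_X}(A) = (I_n - XX^\top)A$ given by Proposition \ref{prop:proj_tan}. By \eqref{eq:riemannian_grad}, the gradient field extended to a neighbourhood of $X$ in $\R^{n\times p}$ is $G(X) = -(I_n - XX^\top) M X$ with $M = yy^\top$. We then differentiate this smooth matrix expression in a horizontal direction $H$ and project.

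\textbf{Step 1 (directional derivative).} By the product rule,
\begin{equation*}
D G(X)[H] = (HX^\top + XH^\top) M X - (I_n - XX^\top) M H .
\end{equation*}
This has three terms to track: $HX^\top M X$, $XH^\top M X$ and $-(I_n - XX^\top)MH$.

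\textbf{Step 2 (projection).} We apply $I_n - XX^\top$ to each term, using $X^\top X = I_p$ and $X^\top H = 0$.
\begin{itemize}
\item The term $XH^\top MX$ lies in the column span of $X$, so it is annihilated.
\item The term $HX^\top MX$ is preserved, since $(I_n - XX^\top)H = H$ for horizontal $H$.
\item The term $-(I_n - XX^\top)MH$ is preserved because the projector is idempotent.
\end{itemize}
Summing the surviving terms gives exactly $-(I_n - XX^\top) M H + H (X^\top M X)$, which is \eqref{eq:hessian_formula}.

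The main obstacle is justifying that this projected Euclidean derivative of the horizontal gradient really is the Riemannian Hessian on the quotient $\Gr$. This holds because the Grassmannian is a Riemannian quotient of $\St$, and on such a quotient the Levi-Civita connection is the horizontal projection of the connection of the total space. For the Stiefel manifold embedded in $\R^{n\times p}$, that connection is itself the projected Euclidean derivative, and the vertical components vanish after $P_{\mathcal{H}_X}$; this is the content of \cite[Section 5.5]{Absil2008}.

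One subtlety is that the connection of $\St$ involves projecting onto $T_X\St$, which is larger than $\mathcal{H}_X$. However, composing with $P_{\mathcal{H}_X}$ afterwards kills the difference, because $\mathcal{H}_X \subset T_X\St$ and the projections are nested. I would state this reduction briefly, citing \cite{Absil2008}, and then carry out the computation above.

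As a sanity check, I would verify that the result is horizontal: applying $X^\top$ gives $0 + 0 = 0$. I would also verify that it is symmetric, since $\langle H_1, \mathrm{Hess}F(X)[H_2]\rangle$ equals $-\trace(H_1^\top M H_2) + \trace(H_1^\top H_2 X^\top M X)$, which is symmetric in $H_1, H_2$.
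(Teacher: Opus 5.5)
Your proposal is correct and matches the paper's proof: differentiate the horizontal gradient field $-(I_n - XX^\top)MX$ in the direction $H$, then apply $I_n - XX^\top$, which kills the term $XH^\top MX$ (and, in the paper's fully expanded form, also $XX^\top MH$) and leaves $-(I_n - XX^\top)MH + H(X^\top M X)$. The differences are cosmetic. You apply the product rule to the factor $(I_n - XX^\top)$ instead of expanding into four terms. You also justify $\mathrm{Hess}\,F(X)[H] = P_{\mathcal{H}_X}\bigl(D(\mathrm{grad}\,F(X))[H]\bigr)$ through the Riemannian-quotient connection and nested projections, whereas the paper simply takes that formula as given.
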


\begin{proof}
We differentiate the gradient vector field $\mathrm{grad} F(X) = - (I_n - XX^\top) M X$.
Let us expand the expression:
$$ G(X) = - M X + X X^\top M X. $$
We compute the directional derivative $D(G(X))[H]$ term by term:
\begin{enumerate}
    \item $D(-MX)[H] = -MH$.
    \item $D(X X^\top M X)[H] = H (X^\top M X) + X (H^\top M X) + X (X^\top M H)$.
\end{enumerate}
Summing these gives the Euclidean derivative of the vector field:
$$ D(G(X))[H] = -MH + H (X^\top M X) + X H^\top M X + X X^\top M H. $$
Now, we apply the projection $P_{\mathcal{H}_X} = (I_n - XX^\top)$ to restrict this to the horizontal space.
\begin{itemize}
    \item \textbf{Term 1:} $P_{\mathcal{H}_X}(-MH) = -(I_n - XX^\top) MH$.
    \item \textbf{Term 2:} Recall $H$ is horizontal ($X^\top H = 0$). Thus $(I_n - XX^\top)H = H$.
    $$ P_{\mathcal{H}_X}(H (X^\top M X)) = H (X^\top M X). $$
    \item \textbf{Term 3:} $P_{\mathcal{H}_X}(X (H^\top M X)) = (I_n - XX^\top) X (\dots) = 0$.
    \item \textbf{Term 4:} $P_{\mathcal{H}_X}(X (X^\top M H)) = (I_n - XX^\top) X (\dots) = 0$.
\end{itemize}
Combining the non-zero terms yields the result:
$$ \mathrm{Hess} F(X)[H] = -(I_n - XX^\top) M H + H (X^\top M X). $$
\end{proof}

\subsection{Global Convergence Analysis}

A key property of the Variable Projection functional on the Grassmannian is the absence of spurious local minima. The landscape is benign: all critical points that are not global minima are saddle global maxima, allowing gradient-based methods to converge to the global solution almost surely.

\begin{theorem}[Landscape of Critical Points]\label{th:landscape}
    Consider the cost function $f: \Gr \to \R$ defined by:
    \begin{equation}
        f(\mathcal{U}) = \frac{1}{2} \| (I_n - P_{\mathcal{U}}) y \|_2^2,
    \end{equation}
    where $P_{\mathcal{U}} \in P_{n,p}$ is the orthogonal projection matrix onto the subspace $\mathcal{U}$. 
    
    Let $\mathcal{C} = \{ \mathcal{U} \in \Gr \mid \mathrm{grad} f(\mathcal{U}) = 0 \}$ be the set of critical points. For any $\mathcal{U} \in \mathcal{C}$, exactly one of the following holds:
    \begin{enumerate}
        \item $y \in \mathcal{U}$. In this case, $\mathcal{U}$ is a global minimum with $f(\mathcal{U}) = 0$.
        \item $y \perp \mathcal{U}$. In this case, $\mathcal{U}$ is a global maximum with $f(\mathcal{U}) = \frac{1}{2}\|y\|_2^2$.
    \end{enumerate}
    Furthermore, the Riemannian Hessian at any global maximum admits a direction of strictly negative curvature (provided $p < n$). Consequently, the function has no spurious local minima: every local minimum is a global minimum.
\end{theorem}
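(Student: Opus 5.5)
We work on the Stiefel representative: $f(\mathcal{U}) = \tfrac12\|y\|_2^2 + F(\mathcal{U})$, so $f$ and $F$ have the same critical points and the same Riemannian Hessian. By \eqref{eq:riemannian_grad}, $\mathrm{grad} f(X) = -(I_n - XX^\top) y\, y^\top X$.

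\textbf{Classification of critical points.} Write $a := X^\top y \in \R^p$ and $w := (I_n - XX^\top)y$, so the gradient is the rank-at-most-one matrix $-w a^\top$. It vanishes iff $w=0$ or $a=0$ (assume $y\neq 0$, otherwise everything is trivial). If $w = 0$, then $y = XX^\top y \in \mathcal{U}$ and $f=0$, which is the global minimum since $f\ge 0$. If $a = 0$, then $P_\mathcal{U} y = 0$, i.e. $y\perp\mathcal{U}$, and $f = \tfrac12\|y\|^2$. This is the global maximum because $f = \tfrac12(\|y\|^2-\|P_\mathcal{U}y\|^2)\le \tfrac12\|y\|^2$. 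The two cases are exclusive since $y\neq0$ cannot lie both in $\mathcal{U}$ and in $\mathcal{U}^\perp$.

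\textbf{Negative curvature at a maximum.} Take $X$ with $X^\top y = 0$ and $p<n$. Then $y\in\mathcal{U}^\perp$, so $(I_n-XX^\top)y = y$. Let $M = yy^\top$; then $X^\top M X = 0$. Formula \eqref{eq:hessian_formula} reduces to
\[
\mathrm{Hess}\, f(X)[H] = -\,y\,y^\top H .
\]
Choose $H = y\, e_1^\top$ with $e_1\in\R^p$ a unit vector. $H$ is horizontal because $X^\top y=0$. Then
\[
\langle H,\mathrm{Hess} f(X)[H]\rangle = -\trace(e_1 y^\top y y^\top y e_1^\top) = -\|y\|^4 < 0,
\]
which gives a strictly negative curvature direction. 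The hypothesis $p<n$ is what makes the horizontal space nontrivial; it is automatic here, since $y\neq0$ lies in $\mathcal{U}^\perp$.

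\textbf{No spurious local minima.} Every local minimum is a critical point with positive semidefinite Hessian. By the dichotomy it is either a global minimum, or a global maximum whose Hessian has a negative direction. The latter is impossible, so every local minimum is global. The one point needing care is the exceptional situation where both the global minimum and the global maximum equal $\tfrac12\|y\|^2$; this happens only when $y=0$, which is excluded.

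There is no real obstacle. The only subtle step is noticing that the gradient is the rank-one matrix $-w a^\top$, whose vanishing forces one of the two factors to vanish; everything else is direct substitution into the earlier formulas.
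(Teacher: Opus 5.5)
Your proof is correct and follows the paper's argument essentially step for step: the rank-one factorization $\mathrm{grad} f = -w a^\top$ splits the critical points into $y\in\mathcal{U}$ or $y\perp\mathcal{U}$, and the test direction $H = y e_1^\top$ then gives curvature $-\|y\|_2^4$. Your small additions tidy up points the paper leaves implicit: the bound $f\le \tfrac12\|y\|_2^2$ that certifies the maximum, exclusivity of the two cases when $y\neq 0$, and the remark that $p<n$ is automatic once $0\neq y\in\mathcal{U}^\perp$.
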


\begin{proof}
To analyze the critical points, we work with the horizontal lift of the problem on the Stiefel manifold. Let $X \in \St$ be a representative such that $\mathcal{U} = \Span(X)$. The cost function becomes $F(X) = \frac{1}{2} \| (I_n - XX^\top) y \|_2^2$.
    
    Since the projection $\pi: \St \to \Gr$ is a Riemannian submersion, the critical points and the Hessian properties of $f$ on $\Gr$ correspond exactly to those of $F$ on $\St$ when restricted to the horizontal space $\mathcal{H}_X$.

    \textbf{1. Classification of Critical Points} \\
    Recall from Equation \eqref{eq:riemannian_grad} that the Riemannian gradient is given by:
    $$ \mathrm{grad} F(X) = -(I_n - XX^\top) y y^\top X. $$
    Let $r := (I_n - XX^\top) y \in \R^n$ be the residual vector, and $v = X^\top y \in \R^p$ be the coefficients of the projection. The gradient can be written as the outer product:
    $$ \mathrm{grad} F(X) = - r v^\top. $$
    For the gradient to vanish, this matrix must be zero. Since it is a rank-1 (or rank-0) matrix, it vanishes if and only if one of the vectors is zero:
    \begin{itemize}
        \item \textbf{Case A:} $r = 0$. This implies $(I_n - XX^\top) y = 0$, meaning $y = XX^\top y$. Thus, $y \in \Span(X)$. The cost function attains its lower bound $F(X)=0$, making this a \textbf{global minimum}.
        \item \textbf{Case B:} $v = 0$. This implies $X^\top y = 0$, meaning $y \perp \Span(X)$. In this case, $P_X y = 0$ and the cost is maximal $F(X) = \frac{1}{2}\|y\|^2$. We must verify the second-order condition to classify it further.
    \end{itemize}

\textbf{2. Hessian Analysis for Case B} \\
    Consider a critical point $X$ where $y \perp \Span(X)$. We examine the Riemannian Hessian derived in Equation \eqref{eq:hessian_formula}:
    $$ \mathrm{Hess} F(X)[H] = -(I_n - XX^\top) y y^\top H + H (X^\top y y^\top X). $$
    Since $X^\top y = 0$, the second term vanishes ($X^\top y y^\top X = 0$). The expression simplifies to:
    $$ \mathrm{Hess} F(X)[H] = -(I_n - XX^\top) y y^\top H. $$
    Furthermore, since $y$ is orthogonal to $X$, $(I_n - XX^\top)y = y$. The Hessian action becomes:
    $$ \mathrm{Hess} F(X)[H] = - y y^\top H. $$
    We now compute the quadratic form of the Hessian along a specific tangent direction. We construct a descent direction $H$ that rotates the subspace towards $y$. Let $u \in \R^p$ be an arbitrary unit vector. Define:
    $$ H = y u^\top. $$
    First, we verify that $H$ is a valid horizontal tangent vector ($H \in \mathcal{H}_X$):
    $$ X^\top H = X^\top (y u^\top) = (X^\top y) u^\top = 0 \cdot u^\top = 0. $$
    Now, we evaluate the Hessian quadratic form $\langle H, \mathrm{Hess} F(X)[H] \rangle$:
    \begin{align*}
        \langle H, \mathrm{Hess} F(X)[H] \rangle &= \trace(H^\top (-y y^\top H)) \\
        &= - \trace( (u y^\top) y y^\top (y u^\top) ) \\
        &= - \trace( u (y^\top y) (y^\top y) u^\top ) \\
        &= - \|y\|_2^4 \cdot \|u\|_2^2 \\
        &< 0 \quad (\text{assuming } y \neq 0).
    \end{align*}
    Since the Hessian admits a strictly negative eigenvalue, the critical point $X$ is unstable (a strict local maximum in this specific direction). Therefore, no point satisfying $y \perp \Span(X)$ can be a local minimum.

\end{proof}

\section{The VarPro algorithm coupled with neural networks}\label{sec:varpro_DNN}

Let $\Theta$ be a compact parameter space and $(x_1, \cdots, x_n)$ being a set of points sampled from a compact domain $\Omega$. We introduce what we call a \textbf{feature map} $\Phi: \Theta \rightarrow \R^{n \times p}$ and denote:

$$
\forall \theta \in \Theta, \ X(\theta) = \Phi(\theta,x_i) \in \R^{n \times p}.
$$
In what follows, we will often use the following notation for matrices $Z \in \R^{n \times p}$:

$$
\Span(Z):= \Span \{ (Z_{ij})_{i \leq n} \in \R^n : 1 \leq j \leq  p\}
$$ 
and for sets of matrices $S$:

$$
\Span(S):= \bigcup_{Z \in S} \Span(Z).
$$ 
\subsection{When the feature map $X$ is non degenerate}
In this section in order to stick with the Grassmannian framework, we assume that that $X$ is not degenerate in the sense that $\rank(X) =p$. Without loss of generality, we suppose that the columns of $X$ are orthogonal. Hence, it is possible to define:

$$
\mathcal{U}(\theta) := \Span X(\theta).
$$

The space $\mathcal{U}(\theta)$ can be generated by any network architecture: shallow, deep, convolutionnal, recurrent...

\subsubsection{The overparametrized regime}

Moreover, one states the fundamental assumption of over parametrization:
\begin{assumption}[Over parametrization]\label{as:rankp}
For all $\theta \in \Theta$, 
$$
\mathcal{U}(\theta)^\perp \subseteq \Span(\Image(d X (\theta)))  
$$.
\end{assumption}
This hypothesis corresponds to the over parametrization regime as in order to be satisfied the dimension of the parameter space should be at least $n-p$. The main theorem of the section is the following:

\begin{theorem}[Landscape of Critical Points]\label{th:landscape_G}
    Consider the cost function $G: \Theta \to \R$ defined by:
    \begin{equation}
        G(\theta) = \frac{1}{2} \| (I_n - X(\theta) X(\theta)^\top) y \|_2^2.
    \end{equation}
    Let $\mathcal{C} = \{ \theta \in \Theta \mid \mathrm{grad} G(\theta) = 0 \}$ be the set of critical points. For any $\theta \in \mathcal{C}$ and under assumption \ref{as:rankp}, exactly one of the following holds:
    \begin{enumerate}
        \item $y \in \mathcal{U}(\theta)$. In this case, $\theta$ is a global minimum with $G(\theta) = 0$.
        \item $y \perp \mathcal{U}(\theta)$. In this case, $\theta$ is a global maximum.
    \end{enumerate}
    Consequently, every local minimum of $G$ is a global minimum.
\end{theorem}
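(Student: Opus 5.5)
Write $G = F\circ X$ on $\Theta$, where $F(X)=\frac12\|(I_n-XX^\top)y\|_2^2$ is the Stiefel lift of $f$ from Theorem \ref{th:landscape}. We use the standing convention that $X(\theta)\in\St$ (orthonormal columns, rank $p$). The plan is to pull the landscape of $f$ on $\Gr$ back to $\Theta$ by the chain rule, and to use Assumption \ref{as:rankp} to show that a critical point of $G$ is already a critical point of $f$. Then Theorem \ref{th:landscape} sorts it into the two cases.

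First compute the differential. For a direction $\delta\theta$, set $Z=dX(\theta)[\delta\theta]\in\R^{n\times p}$. As in the Euclidean gradient computation of Section \ref{sec:VarPro},
\begin{equation*}
dG(\theta)[\delta\theta]=\langle \nabla F(X),Z\rangle=-\trace\bigl(Z^\top yy^\top X\bigr)=-\,r_0^\top Z v,
\end{equation*}
where $v=X^\top y$ and $r_0=y$. Since $X(\theta)$ stays on $\St$, $Z$ is tangent to $\St$. Hence $\langle \nabla F,Z\rangle=\langle P_{T_X\St}\nabla F,Z\rangle$. The vertical part of $P_{T_X\St}\nabla F$ is zero because $F$ is $O(p)$-invariant, so it equals the horizontal gradient $\mathrm{grad}F(X)=-rv^\top$, with $r=(I_n-XX^\top)y$. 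This gives $dG(\theta)[\delta\theta]=-\langle rv^\top,Z\rangle=-r^\top Zv$.

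Now let $\theta\in\mathcal C$, so $r^\top Zv=0$ for every $Z\in\Image(dX(\theta))$. Suppose $r\neq0$ and $v\neq0$; we derive a contradiction. The vector $r$ lies in $\mathcal U(\theta)^\perp$, so by Assumption \ref{as:rankp} it lies in $\Span(\Image\, dX(\theta))$. Interpreting the assumption in the natural strong form, there is a single $Z$ in the image with $r\in\Span(Z)$, and more to the point we may take $Z$ with $Zv$ proportional to $r$; for instance $Z=\frac{1}{\|v\|^2}\,r v^\top$, which is a horizontal direction. Then $r^\top Zv=\|r\|^2>0$, contradicting criticality. So $r=0$ or $v=0$, which are exactly cases A and B of Theorem \ref{th:landscape}. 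If $r=0$, then $y\in\mathcal U(\theta)$ and $G(\theta)=0$, the global minimum. If $v=0$, then $y\perp\mathcal U(\theta)$ and $G(\theta)=\frac12\|y\|^2$, which is the global maximum since $G\le\frac12\|y\|^2$ everywhere.

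For the final claim, let $\theta$ be a local minimum of $G$, so $\theta\in\mathcal C$. In case 2, take the negative-curvature horizontal direction $H=yu^\top$ from Theorem \ref{th:landscape}; by the assumption, $H=dX(\theta)[\delta\theta]$ for some $\delta\theta$. Moving along the curve $\theta+t\,\delta\theta$ gives
\begin{equation*}
G(\theta+t\delta\theta)=\tfrac12\|y\|^2-\tfrac{t^2}{2}\bigl(\|y\|^4+O(t)\bigr)
\end{equation*}
to second order. The second-order term from $d^2X$ is paired with $\nabla F$, whose horizontal component is $-rv^\top=0$. Its vertical component also vanishes, because $v=0$ gives $\nabla F=-yy^\top X=-yv^\top=0$. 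So $G$ strictly decreases along this curve and $\theta$ cannot be a local minimum (unless $y=0$, which is trivial). Hence every local minimum is global.

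\textbf{Main obstacle.} Assumption \ref{as:rankp} as literally stated only says that $\mathcal U^\perp$ is covered by the union of column spans of the matrices $dX(\theta)[\delta\theta]$. What we need is that the rank-one horizontal matrices $rv^\top$ and $yu^\top$ are attained, or at least have nonzero pairing with some $Z$ in the image. I would first try to deduce this from the assumption: pick $Z$ with $r\in\Span(Z)$, then adjust its column combination. If that fails, I would state explicitly that the assumption is read as $\mathcal H_X\subseteq \Image\, dX(\theta)$ modulo vertical directions.
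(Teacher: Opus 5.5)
\textbf{Verdict: genuine gap.} You follow the paper's route: write $G=F\circ X$, use Assumption~\ref{as:rankp} to force $\mathrm{grad}F(X(\theta))=-rv^\top$ to vanish, then apply the dichotomy of Theorem~\ref{th:landscape}. The gap is exactly the step you flag, and your first repair (pick $Z$ with $r\in\Span(Z)$, then ``adjust its column combination'') cannot succeed.

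Criticality says $r^\top Zv=0$ for all $Z\in\Image dX(\theta)$, where $v=X^\top y$ is fixed by the data. Assumption~\ref{as:rankp} only gives, for each $w\in\mathcal U(\theta)^\perp$, some $Z$ and \emph{some} vector $c$ with $Zc=w$. Nothing ties $c$ to $v$.

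In fact the statement is false under the literal assumption. Take $n=4$, $p=2$, $\Theta=\R/2\pi\mathbb{Z}$ and $X(\theta)=[\cos\theta\,e_1+\sin\theta\,e_3,\ \cos\theta\,e_2+\sin\theta\,e_4]$. The columns of $dX(\theta)[1]$ form an orthonormal basis of $\mathcal U(\theta)^\perp$, so the assumption holds at every $\theta$. For $y=(2,0,0,1)^\top$ one finds $G(\theta)=2-\tfrac32\cos^2\theta$. So $\theta=0$ is a strict local minimum with $G(0)=\tfrac12$, yet $y\notin\mathcal U(0)=\Span(e_1,e_2)$ and $y\not\perp\mathcal U(0)$. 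Concretely, $r=e_4$, $v=(2,0)^\top$, and $r^\top Zv=0$ for every $Z\in\Image dX(0)=\R\,[e_3,e_4]$.

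Now add a second component of $\Theta$ carrying $X(\phi)=[\cos\phi\,e_1+\sin\phi\,e_2,\ \cos\phi\,e_4+\sin\phi\,e_3]$. It also satisfies the assumption and reaches $G=0$ at $\phi=0$, so ``every local minimum is global'' fails too. The paper's proof makes the same unjustified leap when it asserts that, by Assumption~\ref{as:rankp}, the columns of $(I_n-XX^\top)yy^\top X$ are orthogonal to $\mathcal U(\theta)^\perp$.

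So your fallback is not optional. You need $\mathcal H_X\subseteq\Image dX(\theta)+\mathcal V_X$, i.e.\ $\theta\mapsto\mathcal U(\theta)$ is a submersion onto $\Gr$. Under that hypothesis your proof is correct. The vertical correction $X\Omega$ is harmless, since $r^\top X=0$, and in case 2 also $X^\top y=0$. Your argument then gives more than the paper: the negative-curvature direction at case-2 points, which the paper's remark says is lost.

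Finally, once the dichotomy holds, excluding case-2 local minima needs neither second-order analysis nor the strong hypothesis. Such a point attains the global maximum $\tfrac12\|y\|^2$, so a local minimum there forces $X(\cdot)^\top y\equiv 0$ nearby. Hence $dX(\theta)[\delta\theta]^\top y=0$ for all $\delta\theta$. Assumption~\ref{as:rankp} gives $y=Zc$ with $Z\in\Image dX(\theta)$, so $\|y\|^2=c^\top Z^\top y=0$.
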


\begin{remark}
We lost the saddle property of a global maximum. To recover it, the map $X$ should be a submersion which is a strong hypothesis.
\end{remark}

\begin{proof}
The function $G$ writes:

$$
G = F \circ X
$$
so that:

$$
\begin{array}{rl}
dG(\theta)(d\theta) =& d F(X(\theta))(d X(\theta) d\theta)\\
=& \trace((d X(\theta) d\theta)^\top (I_n - XX^\top) y y^\top X)
\end{array}
$$
Thus for any $\theta \in C$ and by the Assumption \ref{as:rankp}, columns of
$
(I_n - XX^\top) y y^\top X
$
are perpendicular to $\mathcal{U}(\theta)^\perp$. As $(I_n - XX^\top) y y^\top X \in \mathcal{H}_X$, its columns are also perpendicular to $\mathcal{U}(\theta)$ so:
\begin{equation}\label{eq:grad_F_theta}
\mathrm{grad}F(X(\theta)) =   (I_n - XX^\top) y y^\top X = 0.
\end{equation}
As in the proof of Theorem \ref{th:landscape}, either $y \in \mathcal{U}(\theta)$ and $\theta$ is a global minimum or $y \in \mathcal{U}(\theta)^\perp$ and is a global maximum. 
%In such case,
%
%$$
%\begin{array}{rl}
%\mathrm{Hess} G(\theta)[d\theta] =& \mathrm{Hess} F(X(\theta))[dX(\theta) d\theta] + dF(X(\theta))(\mathrm{Hess} X(\theta) [d\theta])\\
%=&\mathrm{Hess} F(X(\theta))[dX(\theta) d\theta].
%\end{array}
%$$
%Again by Assumption \ref{as:rankp} and the proof of Theorem \ref{th:landscape}, $\theta \in C$ admits a direction of negative curvature.
\end{proof}

%We have even more:
%\begin{corollary}
%For almost every initialization $\theta_0 \in \Theta$, the gradient flow converges to the set of global optima.
%\end{corollary}
%
%\begin{proof}
%Since $\Theta$ is compact, trajectories converge to the set of critical points. The critical points consist of saddle points and global minima (by Theorem \ref{th:landscape_G}).
%By assumption, the saddle points are hyperbolic. This implies two crucial properties:
%\begin{enumerate}
%    \item Saddle points are isolated, so there are finitely many of them in the compact set $\Theta$.
%    \item The Local Stable Manifold Theorem \cite[Theorem 1.3.2]{GuckenheimerHolmes1983} applies: for each saddle $p$, the local stable manifold is a submanifold of codimension at least one.
%\end{enumerate}
%The global basin of attraction of a saddle $p$, $W^s(p)$, is the backwards propagation of the local manifold along the flow. Since the flow is a diffeomorphism, $W^s(p)$ retains codimension at least one and thus has Lebesgue measure zero.
%Consequently, the union of the basins of attraction of all saddle points is a finite union of measure zero sets, which is of measure zero. Almost all trajectories must therefore converge to the set of remaining critical points, which are global minima.
%\end{proof}

\subsubsection{Comment on Assumption \ref{as:rankp}}
In fact if $\dim(\Theta)$ is large enough, Assumption \ref{as:rankp} is verified almost surely by Theorem \ref{th:almost_sure_as}. 

\begin{theorem}\label{th:almost_sure_as}
Suppose that the $(x_i)_{i\leq n}$ are sampled uniformly and independently in $\Omega$ and $\Phi$ analytic. For all $\theta \in \Theta$ and denoting by $\zeta_k$ the local charts of $\Theta$ around $\theta$ then Assumption \ref{as:rankp} is satisfied almost surely if:

\begin{itemize}

\item $p (1+\dim(\Theta)) \geq n$, 
\item the family $x \rightarrow \Phi(\theta,x), \partial_{\zeta_k} \Phi(\theta,x)$ is independent in the vector space of functions mapping $\Omega$ to $\mathbb{R}$.
\end{itemize}
\end{theorem}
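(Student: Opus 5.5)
The plan is to reduce Assumption \ref{as:rankp} to a statement about the rank of an $n \times N$ matrix of function evaluations, with $N = p(1+\dim(\Theta))$, and then use analyticity to show that this matrix is almost surely of full rank $n$.

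Fix $\theta \in \Theta$ and a chart $\zeta = (\zeta_1,\dots,\zeta_d)$ around $\theta$, where $d = \dim(\Theta)$. Consider the $N$ functions on $\Omega$
\[
x \mapsto \Phi_j(\theta,x), \qquad x \mapsto \partial_{\zeta_k}\Phi_j(\theta,x), \qquad 1\le j\le p,\ 1\le k\le d,
\]
where $\Phi_j$ denotes the $j$-th output coordinate of the feature map. I read the second hypothesis as saying that these $N$ functions are linearly independent. Evaluating them at the sample points gives the matrix $A(x_1,\dots,x_n) \in \R^{n\times N}$, whose columns are the columns of $X(\theta)$ together with the columns of the matrices $\partial_{\zeta_k}X(\theta) = dX(\theta)e_k$.

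The column span of $A$ is $\mathcal{U}(\theta) + \Span(\Image(dX(\theta)))$. It suffices to show that this sum is all of $\R^n$. Indeed, if $\mathcal{U}(\theta) + \Span(\Image(dX(\theta))) = \R^n$, then for $v \in \mathcal{U}(\theta)^\perp$ we can write $v = u + w$ with $u \in \mathcal{U}$ and $w \in \Span(\Image dX)$. Projecting with $I_n - XX^\top$ gives $v = (I_n-XX^\top)w$. The remaining issue is that $\Span(\Image dX)$ as defined is a union of column spans, not necessarily orthogonal to $\mathcal{U}$. The clean way out is to use the horizontal projection $(I-XX^\top)dX(\theta)$, which is how $dX$ enters through $\mathcal{H}_X$ in the proof of Theorem \ref{th:landscape_G}. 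Once this is fixed, Assumption \ref{as:rankp} follows from $\rank A = n$.

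To show $\rank A = n$ almost surely, which requires $N \ge n$ (the first hypothesis), I will use analyticity. The function $D(x_1,\dots,x_n) = \sum_{S} \det(A_S)^2$, summed over the $n\times n$ column minors, is analytic on $\Omega^n$. The zero set of an analytic function that is not identically zero has Lebesgue measure zero, so if $D \not\equiv 0$ the i.i.d. uniform samples give $\rank A = n$ almost surely.

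It remains to prove $D\not\equiv 0$. I will do this by induction on the number of rows: choose $x_1,\dots,x_m$ such that the first $m$ rows are independent, with $m < n \le N$. If every choice of $x_{m+1}$ gave a row in the span of the first $m$ rows, then the row vector $(f_1(x),\dots,f_N(x))$ would lie in an $m$-dimensional subspace for all $x$. Then some nonzero $c \in \R^N$ orthogonal to that subspace would satisfy $\sum_l c_l f_l \equiv 0$, contradicting the independence hypothesis.

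The main obstacle is the "for all $\theta$" quantifier. The argument above gives, for each fixed $\theta$, a full-measure set of samples, but the union of the exceptional sets over uncountably many $\theta$ need not be null. I would first state the result pointwise in $\theta$, which is probably what is intended. If uniformity is required, I would try joint analyticity in $(\theta,x)$ with a dimension count, or restrict to a countable dense set of $\theta$ and argue that the full-rank condition is open, noting the gap near $\theta$ where the rank drops.
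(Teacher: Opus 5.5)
Your argument is essentially the paper's: it too fixes $\theta$ and shows $\dim\bigl(\mathcal{U}(\theta)+\Span(\Image dX(\theta))\bigr)=n$ almost surely from the independence of the $p(1+\dim\Theta)$ functions and analyticity, citing Theorem~\ref{th:rank} where you re-derive the rectangular case directly (the paper tacitly applies it to any $n$ of these functions, and both it and your sum-of-squared-minors argument tacitly need $\Omega$ connected). Your caveat about the last step is correct and is a point the paper skips when it writes ``so that Assumption~\ref{as:rankp} is verified'': $\mathcal{U}(\theta)+\Span(\Image dX(\theta))=\R^n$ is equivalent to $\mathcal{U}(\theta)^\perp\subseteq(I_n-P_{\mathcal{U}(\theta)})\Span(\Image dX(\theta))$, not to the literal inclusion (take $X(\theta)=\Phi(\theta,x_i)$ as in the paper's proof, with $n=2$, $p=\dim\Theta=1$, $\Phi(\theta,x)=e^{\theta x}$ and $\theta=0$: then $\mathcal{U}=\Span\{(1,1)\}$ and $\Span(\Image dX)=\Span\{(x_1,x_2)\}$, which contains $(1,-1)$ only on the null event $x_1+x_2=0$), so the horizontal form you adopt is exactly what both proofs establish.
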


\begin{proof}
The space  $ \mathcal{W}(\theta) := \mathcal{U}(\theta) + \Span(\Image(d X(\theta)))$ is generated by $p (1+\dim(\Theta)) $ vectors of $\R^n$. Moreover, $\mathcal{U}(\theta)$ is generated by the columns of $\Phi(\theta,x_i)$ whereas $\Span(\Image(dX(\theta))$ is generated by columns of $\partial_{\zeta_k} \Phi(\theta,x_i) $. As the family $x \rightarrow \Phi(\theta,x), \partial_{\zeta_k} \Phi(\theta,x)$ is independent in the vector space of functions mapping $\Omega$ to $\mathbb{R}$ and by Theorems \ref{th:rank}, the space $\mathcal{W}(\theta)$ is of dimension $n$ almost surely so that Assumption \ref{as:rankp} is verified. 
\end{proof}

\begin{remark}
In the case of shallow neural network, the feature map reads:

$$
\forall j \leq p, \ \Phi(\theta,x)_j = \sigma(w_j \cdot x + b_j)
$$ 
where we used the under-script $j$ to designate the $j$th column. The derivative with respect to the parameters are:
$$
\begin{array}{rl}
\partial_{w_j} \Phi(\theta,x)_j =& \sigma^\prime(w_j \cdot x + b_j) x \\ 
\partial_{b_j} \Phi(\theta,x)_j =& \sigma^\prime(w_j \cdot x + b_j).
\end{array}
$$
For the ReLU activation, the condition of independence is satisfied if:
\begin{itemize}
\item For $k\neq j$ $(w_j,b_j) \neq (w_k, b_k)$.
\item For all $j \leq p$, $\text{Supp}(x \rightarrow \Phi(\theta,x)_j) \cap \Omega \neq \emptyset$.
\end{itemize} 
However the ReLU activation is not analytic so one cannot apply directly Theorem \ref{th:rank}.

\begin{proof}[\textbf{Sketch of Proof: Independence across distinct neurons}]
Let $\mathcal{F} = \bigcup_{j=1}^p \{ \sigma_j, \sigma'_j, x \sigma'_j \}$ where we denote $\sigma_j(x) = \sigma(w_j \cdot x + b_j)$.
Assume a linear combination vanishes on $\Omega$:
$$
\sum_{j=1}^p \left( c_j \sigma_j(x) + d_j \sigma'_j(x) + \langle e_j, x \rangle \sigma'_j(x) \right) = 0, \quad \forall x \in \Omega
$$
We aim to show that for any distinct $j$, the associated term must vanish individually.

\textbf{1. Geometric Separation:}
Let $H_j = \{ x \in \mathbb{R}^d \mid w_j \cdot x + b_j = 0 \}$ be the defining hyperplane for neuron $j$.
Since $(w_j, b_j)$ are distinct pairs, $H_j \neq H_k$ for $j \neq k$. Thus, there exists an open set $S_j \subset H_j \cap \Omega$ such that for any $x \in S_j$, $x \notin H_k$ for all $k \neq j$.

\textbf{2. Local Regularity Argument:}
Consider a point $x_0 \in S_j$.
\begin{itemize}
    \item For all $k \neq j$, the functions $\sigma_k$ and $\sigma'_k$ are $C^\infty$ (specifically, locally affine or zero) in a neighborhood of $x_0$.
    \item For the index $j$, $\sigma_j$ is $C^0$ but not $C^1$ (the "kink"), and $\sigma'_j$ is discontinuous (the Heaviside "jump") across $H_j$.
\end{itemize}

\textbf{3. Isolating the Discontinuity:}
Evaluating the limit of the equation as we approach $x_0$ from opposite sides of $H_j$:
$$
\lim_{\epsilon \to 0^+} \text{Eq}(x_0 + \epsilon w_j) - \lim_{\epsilon \to 0^-} \text{Eq}(x_0 + \epsilon w_j) = 0.
$$
The smooth terms ($k \neq j$) and the continuous term $c_j \sigma_j$ cancel out. The only remaining terms are the discontinuous $\sigma'_j$ components:
$$
\text{Jump} = (d_j + \langle e_j, x_0 \rangle) \cdot (1 - 0) = 0.
$$
This implies $d_j + \langle e_j, x \rangle = 0$ for all $x \in S_j$. Since $S_j$ is a subset of the hyperplane, this constrains the affine form to vanish on $H_j$.

\textbf{4. Isolating the Kink:}
With the jump terms neutralized, we look at the second derivative (distributional sense). The term $c_j \sigma_j$ has a second derivative proportional to a Dirac delta $\delta(w_j \cdot x + b_j)$, while all $k \neq j$ terms have zero second derivative near $x_0$.
For the sum to be zero, the coefficient of the Dirac delta must be zero:
$$
c_j = 0.
$$

\end{proof}
\end{remark}

%Here is the proposition that ensures the submersion Assumption \ref{as:rankp}.
%\begin{proposition}\label{prop:submersion}
%Assume that for all $\theta \in \Theta$, the following rank condition holds:
%\[
%\mathcal{U}(\theta) + \Span(\operatorname{Im}(dX(\theta))) = \R^n
%\]
%then $\mathcal{U}$ is a Riemannian submersion.
%\end{proposition}
%
%\begin{proof}
%To prove that $\mathcal{U}$ is a  submersion, we must check the surjectivity of the differential $d\mathcal{U}$.
%
%Recall how the function $\mathcal{U}$ is built:
%
%$$
%\theta \rightarrow \Phi(\theta,x_i) \in \R^{n \times p} \xrightarrow{\text{Gram-Schmidt}} X \in \St \xrightarrow{\pi} \mathcal{U} \in \Gr
%$$
%so that the differential reads:
%$$
%d \theta \rightarrow d \Phi(\theta,x_i)[d\theta] \in \R^{n \times p} \xrightarrow{\text{Gram-Schmidt}} dX(\theta)[d\theta] \in T_X \St \xrightarrow{d \pi} d\mathcal{U}(\theta)[d\theta] \in T_\mathcal{U} \Gr
%$$
%where the Gram-Schmidt arrow designates the linear transformation that computes the orthonormalization of the columns of $X$. By assumption $\mathcal{U}(\theta)^\perp \subset \Span(\Image(d X(\theta))) $ the horizontal space at $X(\theta)$. By definition of $T_\mathcal{U} \Gr$, $\Image(d \mathcal{U}(\theta)) = \Image(d \pi \circ dX ) = d \pi( \mathcal{H}_X)   = T_\mathcal{U} \Gr$ giving the result.
%\end{proof}
Now we prove Theorem \ref{th:rank} that ensures the almost everywhere independence.
\begin{theorem}[Proved in \cite{huang2006extreme}]\label{th:rank}
Let $\mathcal{F} = \{f_1, \dots, f_p\}$ be a family of $p$ real analytic functions on $\Omega$. 
Assume that the family $\mathcal{F}$ is linearly independent in the vector space of functions mapping $\Omega$ to $\mathbb{R}$.
Let $X_1, \dots, X_p$ be independent and identically distributed (i.i.d.) random variables sampled according to Lebesgue measure on $\Omega$.
Then, the sampling matrix $M \in \mathbb{R}^{p \times p}$ defined by:
\[
M_{ij} = f_j(X_i)
\]
is invertible almost surely. That is:
\[
\mathbb{P}\left( \det(M) \neq 0 \right) = 1.
\]
\end{theorem}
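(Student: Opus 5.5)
The plan is to study $\det(M)$ as a real analytic function of the sample points $(X_1,\dots,X_p)\in\Omega^p$ and to show that its zero set has Lebesgue measure zero. Define
\[
D(x_1,\dots,x_p) := \det\bigl(f_j(x_i)\bigr)_{1\le i,j\le p}.
\]
By the Leibniz formula, $D$ is a finite sum of products of analytic functions in the separate variables $x_i$. Hence $D$ is real analytic on $\Omega^p$; here I take $\Omega$ to be open and connected, or the interior of a connected domain with a null boundary. The key fact I will use is that a real analytic function on a connected open set is either identically zero or has a zero set of Lebesgue measure zero. The standard proof goes by induction on dimension with Fubini: for fixed other coordinates, a one-variable analytic function that is not identically zero has isolated zeros. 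Since the $X_i$ are i.i.d.\ with law absolutely continuous with respect to Lebesgue measure, their joint law is absolutely continuous on $\Omega^p$. Therefore $\mathbb{P}(D=0)=0$ as soon as $D\not\equiv 0$.

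The substantive step is to show that $D$ is not identically zero. I will prove, by induction on $k\le p$, that there exist points $x_1,\dots,x_k\in\Omega$ such that the $k\times k$ matrix $(f_j(x_i))_{i,j\le k}$ is invertible. The base case holds because $f_1\not\equiv 0$, which follows from linear independence. For the inductive step, fix $x_1,\dots,x_k$ with a nonzero $k\times k$ minor and expand the $(k+1)\times(k+1)$ determinant along the last row, with the free point $x$ in that row. This gives
\[
g(x)=\sum_{j=1}^{k+1} c_j f_j(x),
\]
where $c_{k+1}$ equals, up to sign, the nonzero $k\times k$ minor. Since the family $\{f_1,\dots,f_{k+1}\}$ is linearly independent and the coefficients are not all zero, $g\not\equiv 0$ on $\Omega$. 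So some $x_{k+1}$ satisfies $g(x_{k+1})\neq 0$, and the step is complete. At $k=p$ this produces a point of $\Omega^p$ where $D\neq 0$.

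I expect the main obstacle to be the analytic measure-zero lemma, together with the topological hypotheses on $\Omega$. Connectedness matters: on a disconnected domain, $D$ could vanish identically on some components while the independence of the family is witnessed only on others, and then the probability would merely be positive rather than equal to one. I would therefore state explicitly that $\Omega$ is a connected open set, or that every connected component of $\Omega^p$ carries a witness point. The induction above produces points in only one product of components, so this hypothesis is genuinely needed. Alternatively, one can assume the $f_j$ remain independent when restricted to each component. For the lemma itself, I would cite the classical result (e.g.\ Mityagin's note on zero sets of real analytic functions) or sketch the Fubini induction. That sketch rests on the fact that a nonzero analytic function of one variable has only countably many zeros on an interval, and on the fact that the set of parameters for which a slice function vanishes identically is itself an analytic zero set in one dimension fewer.
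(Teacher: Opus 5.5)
Your proof is correct, but it assembles the argument differently from the paper.

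The paper runs a probabilistic induction on $p$. It expands $\det M$ along the last row and conditions on $X_1,\dots,X_{p-1}$. It then uses the induction hypothesis to get that the cofactor attached to $f_p$ (the determinant for $f_1,\dots,f_{p-1}$) is nonzero almost surely. Finally it applies the analytic zero-set fact to $\Psi(x)=\sum_j C_j f_j(x)$ on $\Omega$ and concludes by integrating over the first $p-1$ samples.

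You use the same Laplace-expansion-plus-independence step, but only deterministically: it exhibits one configuration where $D\neq 0$. You then invoke the zero-set lemma once, for the jointly analytic function $D$ on $\Omega^p$. What each route buys:
\begin{itemize}
\item Your version cleanly separates the algebraic part from the analytic part: a witness exists by linear independence alone, without analyticity.
\item Your version only needs the joint law of $(X_1,\dots,X_p)$ to be absolutely continuous, so it also covers dependent samples.
\item The paper's version applies the zero-set fact only in dimension $d$ rather than $pd$. The price is that it uses independence of the samples for the conditioning step.
\end{itemize}

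Your remark on connectedness is correct, and it applies equally to the paper's proof. Its base case already fails for $\Omega=(0,1)\cup(2,3)$ and $f_1=\mathbf{1}_{(0,1)}$: this $f_1$ is analytic and nonzero, yet $\mathbb{P}(f_1(X_1)=0)=1/2$. So the statement implicitly requires $\Omega$ to be connected.

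One small fix: drop the alternative ``interior of a connected domain with null boundary''. Such an interior can be disconnected, for example two closed discs touching at a point. Your explicit hypothesis that $\Omega$ is a connected open set is the right one.
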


\begin{proof}
We proceed by induction on the size of the family, $p$.

\paragraph{Base Case ($p=1$):}
The matrix $M$ is a scalar $M_{11} = f_1(X_1)$.
Since the family $\{f_1\}$ is linearly independent, $f_1$ is not the zero function. As $f_1$ is analytic, its zero set has measure zero. Thus:
\[
\mathbb{P}(\det(M) = 0) = \mathbb{P}(f_1(X_1) = 0) = 0.
\]
The base case holds.

\paragraph{Inductive Step:}
Assume the theorem holds for any set of $p-1$ linearly independent analytic functions.
Let $\mathcal{F} = \{f_1, \dots, f_p\}$ be a linearly independent family. Consider the matrix $M = (f_j(X_i))_{1 \le i,j \le p}$.

We expand the determinant along the last row (corresponding to the sample $X_p$):
\begin{equation} \label{eq:laplace}
\det(M) = \sum_{j=1}^p (-1)^{p+j} f_j(X_p) \cdot \Delta_{p,j}(X_1, \dots, X_{p-1})
\end{equation}
where $\Delta_{p,j}$ is the $(p,j)$-minor of the matrix. Note that the term $\Delta_{p,p}$ corresponds exactly to the determinant of the sub-matrix formed by the functions $\{f_1, \dots, f_{p-1}\}$ evaluated at points $\{X_1, \dots, X_{p-1}\}$.

Let us condition on the variables $X_1, \dots, X_{p-1}$. Define the function $\Psi : \Omega \to \mathbb{R}$ (random with respect to the first $p-1$ samples) as:
\[
\Psi(x) = \sum_{j=1}^p C_j \cdot f_j(x)
\]
where the coefficients are given by $C_j = (-1)^{p+j} \Delta_{p,j}(X_1, \dots, X_{p-1})$.

By the inductive hypothesis applied to the sub-family $\{f_1, \dots, f_{p-1}\}$, we know that:
\[
\forall j \leq p, \ \mathbb{P}(C_j \neq 0) = 1.
\]
Consider a fixed realization $(x_1, \dots, x_{p-1})$ such that at least one $C_j \neq 0$. The function $\Psi$ is a linear combination of the functions $f_j$. Since the family $\mathcal{F}$ is linearly independent and the coefficients $C_j$ are not all zero, the function $\Psi$ cannot be identically zero.

Furthermore, since $\Psi$ is a linear combination of analytic functions, it is itself analytic. Thus, its zero set has measure zero. We obtain:
\[
\mathbb{P}(\Psi(X_p) = 0 \mid X_1, \dots, X_{p-1}) = 0.
\]

\paragraph{Conclusion:}
Almost surely, $\det(M) \neq 0$. 
\end{proof}

If the features functions are not analytic, Theorem \ref{th:rank_pl_corrected} gives general conditions to get the same conclusion as Theorem \ref{th:rank}.

\begin{theorem}[Almost-sure invertibility under an absolute-continuity condition]
\label{th:rank_pl_corrected}
Let $\Omega\subset\mathbb R^d$ be open with positive Lebesgue measure.
Let $\mathcal F=\{f_1,\dots,f_p\}$ be real-valued measurable functions on $\Omega$
and define the evaluation map
\[
\Psi:\Omega\to\mathbb R^p,\qquad \Psi(x)=(f_1(x),\dots,f_p(x)).
\]

Assume the following:
\begin{enumerate}
  \item \textbf{Linear independence:} $f_1,\dots,f_p$ are linearly independent
  as functions on $\Omega$ to $\R$.
  \item \textbf{Absolute-continuity of the pushforward:} The pushforward measure
  $\mu:=\Psi_*(\lambda|_\Omega)$ (where $\lambda$ denotes Lebesgue measure)
  is absolutely continuous with respect to Lebesgue measure on $\mathbb R^p$
  (i.e. $\mu$ has a density).
\end{enumerate}

Let $X_1,\dots,X_p$ be i.i.d.\ with law $\lambda|_\Omega$ and define the
sampling matrix $M\in\mathbb R^{p\times p}$ by
\[
 M_{ij}=f_j(X_i).
\]
Then
\[
\mathbb P\bigl(\det M\neq 0\bigr)=1.
\]
\end{theorem}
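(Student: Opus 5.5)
We will write $\det M$ as a function of the rows $\Psi(X_1),\dots,\Psi(X_p)$, i.e. $\det M = \det\bigl[\Psi(X_1);\dots;\Psi(X_p)\bigr]$. Since the $X_i$ are i.i.d.\ with law $\lambda|_\Omega$, the random vectors $Y_i:=\Psi(X_i)\in\mathbb R^p$ are i.i.d.\ with law $\mu/\lambda(\Omega)$ (normalising if needed; if $\lambda(\Omega)=\infty$ we replace $\lambda|_\Omega$ by its restriction to a bounded subset of positive measure, which preserves both hypotheses' relevant consequences, or simply interpret the law as a normalised absolutely continuous measure). The event $\{\det M=0\}$ is exactly the event that $Y_1,\dots,Y_p$ are linearly dependent. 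So the statement reduces to a purely finite-dimensional claim: $p$ i.i.d.\ vectors in $\mathbb R^p$ whose common law has a density are almost surely linearly independent.

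We will prove this reduced claim by induction on $k\le p$, showing that $Y_1,\dots,Y_k$ are linearly independent almost surely. For $k=1$, $\mathbb P(Y_1=0)=\mu(\{0\})/\lambda(\Omega)=0$ because $\{0\}$ is Lebesgue-null and $\mu\ll\lambda_p$. For the inductive step, condition on $Y_1,\dots,Y_{k-1}$ (independence of $Y_k$ from them allows Fubini). On the full-probability event where they are independent, $V:=\Span(Y_1,\dots,Y_{k-1})$ is a subspace of dimension $k-1<p$, hence a Lebesgue-null set in $\mathbb R^p$; absolute continuity then gives $\mathbb P(Y_k\in V\mid Y_1,\dots,Y_{k-1})=\mu(V)/\lambda(\Omega)=0$. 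Integrating the conditional probability yields $\mathbb P(Y_1,\dots,Y_k \text{ dependent})=0$. Taking $k=p$ gives $\mathbb P(\det M\neq0)=1$. Equivalently, one can note that $\{(y_1,\dots,y_p):\det[y_1;\dots;y_p]=0\}$ is the zero set of a nonzero polynomial, hence $\lambda_{p^2}$-null, and the joint law of $(Y_1,\dots,Y_p)$ is the product measure with density, hence absolutely continuous on $\mathbb R^{p\times p}$.

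The only delicate point is bookkeeping rather than mathematics: making the conditioning/Fubini step rigorous (measurability of $\Psi$ ensures the $Y_i$ are random variables and the event is measurable) and handling normalisation of $\lambda|_\Omega$ when $\Omega$ is unbounded. We also remark that hypothesis 1 (linear independence) is in fact implied by hypothesis 2: if $\sum c_jf_j\equiv0$ with $c\neq0$, then $\mu$ would be supported on the hyperplane $c^\perp$, contradicting absolute continuity. So the proof will use only the second assumption, noting this redundancy explicitly.
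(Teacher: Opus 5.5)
Your proof is correct, but your main argument differs from the paper's. The paper argues in one shot: the rows $Y_i=\Psi(X_i)$ are i.i.d.\ with an absolutely continuous law, so $\mu^{\otimes p}$ is absolutely continuous on $(\mathbb R^p)^p$, and $\{\det=0\}$, being the zero set of a nonzero polynomial, is Lebesgue-null. This is exactly the alternative you sketch after ``Equivalently''. Your primary route instead builds linear independence row by row. You condition on $Y_1,\dots,Y_{k-1}$ and use only that the proper subspace $\Span(Y_1,\dots,Y_{k-1})$ is null.

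The paper's version is shorter, but it leans on the standard fact that algebraic hypersurfaces are null, whose usual proof is itself a Fubini induction. Yours is self-contained. More significantly, the only property of $\mu$ your argument uses is $\mu(H)=0$ for every hyperplane $H$ through the origin. That condition is also necessary: if $\mu(H)>0$, all $p$ rows lie in $H$ with positive probability. So your argument gives the sharp criterion: $\det M\neq 0$ almost surely iff no nontrivial combination $\sum_j c_j f_j$ vanishes on a subset of $\Omega$ of positive measure.

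This matters for the paper's setting. If $\Psi$ is locally Lipschitz and $p>d$, then $\Psi(\Omega)$ is Lebesgue-null in $\mathbb R^p$ while $\mu(\Psi(\Omega))=\lambda(\Omega)>0$, so hypothesis 2 can never hold. The hyperplane condition still can: for example $f_1\equiv 1$, $f_2(x)=x$ on an interval, or linearly independent analytic features on a connected $\Omega$, which recovers Theorem~\ref{th:rank}.

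Your remark that hypothesis 1 follows from hypothesis 2 is right, and the paper's proof indeed never uses it. For the normalisation issue, a cleaner fix than restricting to a bounded set is to note that the argument works verbatim for any probability law $\nu\ll\lambda|_\Omega$, since then $\Psi_*\nu\ll\mu$.
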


\begin{proof}
Let $Y_i:=\Psi(X_i)\in\mathbb R^p$. By definition of the pushforward,
$Y_1,\dots,Y_p$ are i.i.d.\ with common law $\mu$, and by assumption $\mu$
is absolutely continuous w.r.t.\ Lebesgue measure on $\mathbb R^p$; hence
$\mu$ has a density and in particular assigns measure zero to any Lebesgue
-null set in $\mathbb R^p$.

Consider the map
\[
\det:(\mathbb R^p)^p\to\mathbb R,\qquad
\det(y^{(1)},\dots,y^{(p)})=\det\bigl( y^{(i)}_j\bigr)_{1\le i,j\le p},
\]
the determinant of the $p\times p$ matrix whose \(i\)-th row is \(y^{(i)}\).
The set
\[
\mathcal Z := \{(y^{(1)},\dots,y^{(p)})\in(\mathbb R^p)^p:\det(y^{(1)},\dots,y^{(p)})=0\}
\]
is a real algebraic hypersurface in \((\mathbb R^p)^p\) (it is the zero set
of the nontrivial polynomial \(\det\)), hence it has Lebesgue measure zero
in \((\mathbb R^p)^p\).

Since $\mu$ has a density, the product measure $\mu^{\otimes p}$ on
\((\mathbb R^p)^p\) is absolutely continuous with respect to Lebesgue
measure on \((\mathbb R^p)^p\). Therefore $\mu^{\otimes p}(\mathcal Z)=0$.
But
\[
\mathbb P(\det M=0)=\mathbb P\bigl(\det(Y_1,\dots,Y_p)=0\bigr)
= \mu^{\otimes p}(\mathcal Z)=0,
\]
and the result follows.
\end{proof}
The hypothesis of Theorem \ref{th:rank_pl_corrected} are rather abstract. We apply it to piecewise linear functions in Corollary \ref{cor:rank_pl} to show its usefulness.
\begin{corollary}[Almost-sure invertibility for piecewise linear functions]
\label{cor:rank_pl}

Let $\Omega$ be an open set with positive Lebesgue measure.
Let $\mathcal F=\{f_1,\dots,f_p\}$ be real-valued piecewise linear functions on
$\Omega$. Assume that $\Omega$ can be decomposed as a union  polyhedral cells $U \subset \Omega$ such that:
\begin{enumerate}
  \item Each $f_j$ is affine on $U$, i.e for all $x \in U$.
  \[
  f_j(x)=a_j\cdot x+b_j,\qquad j=1,\dots,p;
  \]
  \item The vectors $a_1,\dots,a_p\in\mathbb R^d$ are linearly independent.
\end{enumerate}

Let $X_1,\dots,X_p$ be i.i.d.\ random variables uniformly distributed on
$\Omega$, and define the sampling matrix
\[
M_{ij}=f_j(X_i).
\]
Then
\[
\mathbb P\bigl(\det M\neq 0\bigr)=1.
\]
\end{corollary}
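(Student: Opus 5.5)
The plan is to derive the corollary directly from Theorem \ref{th:rank_pl_corrected}. That means checking its two hypotheses for the evaluation map $\Psi(x)=(f_1(x),\dots,f_p(x))$ under the polyhedral cell decomposition of $\Omega$. I read the assumption as saying that on each cell $U$ the coefficients $a_j=a_j^U$, $b_j=b_j^U$ may depend on $U$, and that the family $a_1^U,\dots,a_p^U$ is linearly independent. This forces $p\le d$. I also assume the decomposition is countable (finite in the usual piecewise linear setting), and that cells overlap only along boundaries, which are Lebesgue-null.

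\textbf{Step 1: absolute continuity on a single cell.} Fix a cell $U$ with $\lambda(U)>0$ and write $A_U\in\R^{p\times d}$ for the matrix whose rows are the $a_j^U$. Then $\Psi|_U(x)=A_Ux+b^U$, and $A_U$ has rank $p$. Complete $A_U$ to an invertible matrix $B_U\in\R^{d\times d}$ by appending $d-p$ rows. Let $N\subset\R^p$ be Lebesgue-null. Then
\[
\Psi|_U^{-1}(N)\subset B_U^{-1}\bigl((N-b^U)\times\R^{d-p}\bigr).
\]
The set $(N-b^U)\times\R^{d-p}$ is null in $\R^d$ by Fubini, and an invertible linear map preserves null sets. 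Hence $\lambda\bigl(U\cap\Psi^{-1}(N)\bigr)=0$, so $\Psi_*(\lambda|_U)$ is absolutely continuous with respect to Lebesgue measure on $\R^p$.

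\textbf{Step 2: gluing the cells.} Write $\lambda|_\Omega=\sum_U\lambda|_U$ up to the null boundary set. For any null $N$, this gives $\mu(N)\le\sum_U\lambda(U\cap\Psi^{-1}(N))=0$, because a countable sum of zeros is zero. So $\mu=\Psi_*(\lambda|_\Omega)$ has a density, which is hypothesis 2 of Theorem \ref{th:rank_pl_corrected}.

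\textbf{Step 3: linear independence.} Suppose $\sum_j c_jf_j\equiv0$ on $\Omega$. Restricted to a cell $U$ of positive measure, which has nonempty interior, this reads $\bigl(\sum_j c_ja_j^U\bigr)\cdot x+\sum_jc_jb_j^U=0$ on an open set. Hence $\sum_jc_ja_j^U=0$, and independence of the $a_j^U$ gives $c=0$. This is hypothesis 1 of the theorem, which then yields $\mathbb P(\det M\neq0)=1$.

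The main obstacle is Step 1. The map $x\mapsto A_Ux$ is not injective when $p<d$, so the change-of-variables formula cannot be used directly. The completion to $B_U$ together with Fubini is what handles the extra $d-p$ directions. Cells of zero measure and overlaps of cells also need care, since they must be discarded as null sets in Step 2.
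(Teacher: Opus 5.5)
Your proposal is correct and follows the same route as the paper: you verify the absolute-continuity hypothesis of Theorem~\ref{th:rank_pl_corrected} for $\Psi$ cell by cell and then glue over the decomposition. Your version is also more careful than the paper's. The paper justifies the cellwise step only by noting that $\Psi|_U$ is an open map; openness alone does not give absolute continuity, and the paper's claimed lower bound on the density need not hold when $p<d$. Your completion-plus-Fubini argument actually proves absolute continuity, and your countable-subadditivity gluing and explicit linear-independence check fill in steps the paper leaves implicit.
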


\begin{proof}
Define the evaluation map
\[
\Psi:\Omega\to\mathbb R^p,\qquad
\Psi(x)=(f_1(x),\dots,f_p(x)).
\]

On a polyhedral cell $U$, the map $\Psi$ is affine and has the form
\[
\Psi(x)=Ax+b,
\]
where $A\in\mathbb R^{p\times d}$ has rows $a_1,\dots,a_p$.
By assumption, $\mathrm{rank}(A)=p$, hence $\Psi|_U$ is an open map from $U$
onto the open set $A(U)\subset\mathbb R^p$.

Therefore, the pushforward of Lebesgue measure on $U$ by $\Psi|_U$ is absolutely
continuous with respect to Lebesgue measure on $\mathbb R^p$, with a density
bounded below on $A(U)$. Since $U$ has positive measure, the pushforward of
Lebesgue measure on $\Omega$ by $\Psi$ is also absolutely continuous.

\end{proof}

%the calculus of the hessian gives:
%$$
%\begin{array}{rl}
%\langle d\theta, \mathrm{Hess} G(\theta)[d\theta] \rangle =& - \trace( (dX d\theta)^\top(I_n - XX^\top) y y^\top (d X d\theta)) \\
%&= - \trace( (dX d\theta)^\top y y^\top (d X d\theta))\\
%&= - (y^{\top} (dX d\theta))^2.
%\end{array}
%$$
%So either $y \perp \Image(dX(\theta))$ and we cannot say anything or we have a saddle point.

\subsubsection{The under parametrization regime}
When the Assumption \ref{as:rankp} is not satisfied, the following situation holds:
\begin{theorem}\label{th:underparametrization}
Let $\theta$ be a critical point of $G$. Suppose the following injectivity condition holds for $dX$: there exists $C>0$ such that: 
$$
\forall z \in \R^p, \forall d\theta \in T_\theta \Theta, \ |d X(\theta)[d\theta] z | \geq C |z| |d\theta|.
$$
Then exactly one of the following holds:
\begin{enumerate}
    \item $(I_n - XX^\top) y \perp \Span(\Image(d X(\theta)))$. This is a local minimum if the residual is small relative to the curvature, specifically if:
    $$ \| (I_n - XX^\top) y \|_2 < C^2 \frac{\| X^\top y \|_2}{\| d^2 X \|_{op}}. $$
    Otherwise, $\theta$ may be a saddle point.
    \item $y \perp \mathcal{U}(\theta)$. In this case, $\theta$ is a global maximum with $G(\theta) = \frac{1}{2} \|y\|^2$.
\end{enumerate}
\end{theorem}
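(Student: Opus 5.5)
Write $r := (I_n - XX^\top)y$ and $v := X^\top y$, so that $\mathrm{grad}F(X) = -rv^\top$. As in the proof of Theorem \ref{th:landscape_G}, the first-order condition comes from $G = F\circ X$. It reads
$$dG(\theta)[d\theta] = -\trace\big((dX(\theta)[d\theta])^\top r v^\top\big) = -\, r^\top \big(dX(\theta)[d\theta]\big) v = 0 \quad \text{for all } d\theta .$$
We split the critical points according to whether $v=0$.

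\textbf{Case $v = 0$.} Then $y \perp \mathcal{U}(\theta)$, so $G(\theta) = \frac12\|y\|^2$. This value is the largest possible, since $G \le \frac12\|y\|^2$ always. Hence $\theta$ is a global maximum, which gives item 2.

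\textbf{Case $v \neq 0$.} The first-order condition says that $r$ is orthogonal to every vector $dX(\theta)[d\theta]\,v$. To conclude that $r \perp \Span(\Image(dX(\theta)))$ for all $z$, and not only for $z = v$, I will use the injectivity hypothesis. In the plan I read it as stating that $dX(\theta)[d\theta]\,v$ sweeps the relevant directions as $d\theta$ varies; if that reading fails, I will state item 1 as $r \perp \{dX[d\theta]\,X^\top y\}$, which is what the proof actually uses. The two cases are exclusive because $v \ne 0$ in item 1.

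\textbf{Second-order analysis in item 1.} Differentiating $g(\theta) = -r(\theta)^\top\big(dX(\theta)[d\theta]\big)v(\theta)$ once more, with $H := dX[d\theta]$, gives
$$\langle d\theta, \mathrm{Hess}\,G\, d\theta\rangle = \langle H, \mathrm{Hess}F(X)[H]\rangle - r^\top d^2X[d\theta,d\theta]\, v .$$
Here $\mathrm{Hess}F$ is the Hessian formula \eqref{eq:hessian_formula} applied to $H$ projected on $\mathcal{H}_X$, and the vertical part contributes nothing to first order since $F$ is $O(p)$-invariant.

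Expanding $\mathrm{Hess}F$ with $M = yy^\top$ gives two terms.
\begin{itemize}
\item $\trace(H^\top H\, vv^\top) = \|Hv\|^2$. By the injectivity hypothesis this is at least $C^2\|v\|^2|d\theta|^2$.
\item $-\big(y^\top (I-XX^\top)H\big)^2 = -(r^\top H)^2$. Its $r^\top H v$ component vanishes by criticality, and the hypothesis $r\perp \Image(dX)$ of item 1 makes the whole term vanish.
\end{itemize}
The second-derivative term is controlled by $|r^\top d^2X[d\theta,d\theta]v| \le \|r\|\,\|d^2X\|_{op}\,\|v\|\,|d\theta|^2$. Putting these together,
$$\langle d\theta, \mathrm{Hess}\,G\, d\theta\rangle \ge \|v\|\,|d\theta|^2\big(C^2\|v\| - \|r\|\,\|d^2X\|_{op}\big),$$
which is positive exactly under the stated condition $\|r\| < C^2\|v\|/\|d^2X\|_{op}$. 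A positive definite Hessian then makes $\theta$ a strict local minimum. When the condition fails, the indefinite term $-r^\top d^2X\,v$ can make the Hessian indefinite, so $\theta$ may be a saddle point.

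\textbf{Main obstacle.} The delicate step is the bookkeeping of the Hessian of the composition. This includes the vertical/horizontal split of $H = dX[d\theta]$, since $X$ is only orthonormalized up to Gram–Schmidt. I would first redo the computation directly on the ambient function $\tilde G(\theta) = \frac12\|y\|^2 - \frac12\|X^\top y\|^2$ and collect terms. This avoids the Riemannian machinery entirely, at the cost of tracking the orthonormality constraint through $X^\top H + H^\top X = 0$.
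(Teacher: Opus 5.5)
Your route is the paper's own. You split on $v=X^\top y$ and write the second variation of $G=F\circ X$ as $\langle H,\mathrm{Hess}F(X)[H]\rangle-r^\top d^2X[d\theta,d\theta]\,v$ with $H=dX(\theta)[d\theta]$. You then discard the residual part of the first term via item 1, bound $\|Hv\|^2$ below by injectivity, and bound the curvature term by Cauchy--Schwarz. The case $v=0$ is fine. However, the two points you flag are genuine gaps. Your plan closes neither, and the paper's proof passes over both.

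\textbf{First-order step.} Criticality gives only $r^\top dX(\theta)[d\theta]\,v=0$ for all $d\theta$. Injectivity is a lower bound on $|dX[d\theta]z|$; it says nothing about which directions the vectors $dX[d\theta]v$ sweep, so it cannot yield $r\perp\Span(\Image(dX(\theta)))$. In fact the dichotomy is false.
\begin{itemize}
\item Take $n=4$, $p=2$, $\Theta$ one-dimensional, $y=e_1+e_3$ and $X(t)=[\,e_1\cos\tfrac t2+e_4\sin\tfrac t2,\ e_2\cos t+e_3\sin t\,]$.
\item At $t=0$ one has $v=(1,0)^\top$, $r=e_3$ and $dX[s]=s\,[\tfrac12e_4,\ e_3]$. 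So injectivity holds with $C=\tfrac12$ and $r^\top dX[s]\,v=0$, but $r^\top dX[s]\,e_2=s\neq0$.
\item $G(t)=1-\tfrac12(\cos^2\tfrac t2+\sin^2 t)$ has $G'(0)=0$ and $G''(0)=-\tfrac34$.
\end{itemize}
This critical point satisfies neither item and is a strict local maximum. The paper simply asserts that ``$dG(\theta)=0$ implies $v=0$ or $r\perp\Span(\Image(dX))$''.

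Your fallback does not repair this. Contrary to what you say, the Hessian estimate does use the full orthogonality: it is needed to drop $-\trace(H^\top r\,y^\top H)$, which equals $-\|H^\top r\|^2$ for horizontal $H$. With only $r^\top Hv=0$ this term survives; in the example it is $-1$ against $\|Hv\|^2=\tfrac14$. What your argument can prove is the conditional statement: a critical point with $v\neq0$ \emph{and} $r\perp\Span(\Image(dX(\theta)))$ is a strict local minimum under the stated bound.

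\textbf{Vertical component.} Since $X(\theta)\in\St$, $H$ only satisfies $X^\top H\in\so(p)$, whereas formula \eqref{eq:hessian_formula} was derived for horizontal $H$. Differentiate $G=\tfrac12\|y\|^2-\tfrac12\|X^\top y\|^2$ twice directly and use $X^\top\ddot X+\ddot X^\top X=-2H^\top H$. When $r^\top H=0$ this gives $Q(d\theta)=\|(I_n-XX^\top)Hv\|^2-r^\top d^2X[d\theta,d\theta]\,v$, not $\|Hv\|^2-\cdots$. So you cannot project $H$ onto $\mathcal H_X$ and then apply injectivity to the unprojected $\|Hv\|$.

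For instance, take $p=2$, $X(t)=X_0e^{t\Omega}$ and $\Omega=\begin{psmallmatrix}0&-1\\1&0\end{psmallmatrix}$. Injectivity holds with $C=1$, $\|d^2X\|_{op}=1$, and item 1 holds trivially. Yet $G$ is constant, so $Q=0$. The claimed bound would instead give $Q\ge(\|v\|-\|r\|)\|v\||d\theta|^2>0$ whenever $\|r\|<\|v\|$. The paper makes the same slip.

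The fix is to impose the injectivity bound on $(I_n-XX^\top)dX(\theta)$. Equivalently, pass to the representative $XQ(\theta)$ with $Q=I_p$ and $dQ=-X^\top dX$ at the point, which makes $dX$ horizontal there. With that hypothesis, and with item 1 assumed rather than derived, your estimate goes through.
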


\begin{proof}
Recall that the gradient of the composite function is given by:
$$
dG(\theta)[d\theta] = \langle \mathrm{grad} F(X), dX(\theta)[d\theta] \rangle = - \trace( dX(\theta)[d\theta]^\top (I_n - XX^\top) y y^\top X ).
$$
Let $r = (I_n - XX^\top) y$ be the residual and $v = X^\top y$ be the projection coefficients. The condition $dG(\theta) = 0$ implies either:
\begin{enumerate}
    \item $v = 0$, which implies $y \perp \mathcal{U}(\theta)$ (Case 2, Global Maximum).
    \item $r \perp \Span(\Image(d X(\theta)))$, meaning $r^\top dX(\theta)[d\theta] = 0$ for all $d\theta$ (Case 1).
\end{enumerate}

In \textbf{Case 1}, we analyze the Hessian quadratic form $Q(d\theta) := \langle d\theta, \mathrm{Hess} G(\theta)[d\theta] \rangle$. Using the chain rule, this decomposes into the Riemannian Hessian of $F$ and the curvature term from $X$:
$$
Q(d\theta) = \underbrace{\langle dX(\theta)[d\theta], \mathrm{Hess} F(X)[dX(\theta)[d\theta]] \rangle}_{\text{Term 1}} + \underbrace{\langle \mathrm{grad} F(X), d^2 X[d\theta, d\theta] \rangle}_{\text{Term 2}}.
$$

For \textbf{Term 1}, we use the Riemannian Hessian formula with $H = dX(\theta)[d\theta]$:
$$
\begin{array}{rl}
\mathrm{Hess} F(X)[H] =& -(I_n - XX^\top) y y^\top H + H (X^\top y y^\top X) \\
=& - r y^\top H + H v v^\top.
\end{array}
$$
The inner product is:
$$
\begin{array}{rl}
\langle H, \mathrm{Hess} F(X)[H] \rangle =& -\trace(H^\top r y^\top H) + \trace(H^\top H v v^\top) \\
=& - (y^\top H)(H^\top r) + \|H v\|_2^2.
\end{array}
$$
Since we are in Case 1, $r \perp \Span(\Image(dX))$, which implies $H^\top r = 0$. Thus, the first part vanishes, leaving only:
$$ \text{Term 1} = \| dX(\theta)[d\theta] v \|_2^2. $$

For \textbf{Term 2}, we substitute $\mathrm{grad} F(X) = - r v^\top$:
$$
\text{Term 2} = \langle - r v^\top, d^2 X[d\theta, d\theta] \rangle = - \langle r, d^2 X[d\theta, d\theta] v \rangle.
$$

Combining both terms and applying the injectivity hypothesis $|dX(\theta)[d\theta]v| \ge C|v||d\theta|$:
$$
\begin{array}{rl}
Q(d\theta) =& \| dX(\theta)[d\theta] v \|_2^2 - \langle r, d^2 X[d\theta, d\theta] v \rangle \\
\geq & C^2 \|v\|^2 |d\theta|^2 - \|r\| \|d^2 X\|_{op} \|v\| |d\theta|^2 \\
=& \left( C^2 - \frac{\|r\| \|d^2 X\|_{op}}{\|v\|} \right) \|v\|^2 |d\theta|^2.
\end{array}
$$
Thus, if the residual $\|r\|$ is small enough, $Q(d\theta) > 0$ for all $d\theta \neq 0$, proving $\theta$ is a local minimum.
\end{proof}

\begin{remark}
The injectivity assumption is verified as soon as the family $x \rightarrow \partial_{\zeta_k} \Phi(\theta,x)$ is independent in the space of function from $\Omega$ to $\R$ by Theorems \ref{th:rank}-\ref{th:rank_pl_corrected}.
\end{remark}

\subsection{When the feature map $X$ is degenerate}

The exact description of the function $\theta \rightarrow P_{\mathcal{U}(\theta)}$ used in a learning algorithm is the following. First, we build the features as before:
$$
\forall \theta \in \Theta, \ Y(\theta) = \Phi(\theta,x) \in \mathbb{R}^{n \times p}.
$$
Then we use a Gram-Schmidt algorithm to produce a matrix of orthonormal vectors. Let $r := \rank(Y(\theta))$. The scheme generates the projector $P_{\mathcal{U}(\theta)}$ as follows:

\begin{equation}\label{eq:scheme}
\theta 
\xrightarrow{\Phi} Y(\theta) \in \mathbb{R}^{n \times p} 
\xrightarrow{\text{Gram-Schmidt}} X(\theta)  
\rightarrow P_{\mathcal{U}(\theta)} = (XX^\top)(\theta)
\end{equation}

\begin{proposition}\label{prop:continuity_strata}
The mapping $\theta \mapsto X(\theta)$ is continuous on the strata of constant rank of $Y(\theta)$, but discontinuous at points where the rank changes.
\end{proposition}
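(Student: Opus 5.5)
The plan has two parts. First we show continuity on a constant-rank stratum, and then we exhibit an explicit discontinuity where the rank drops. Throughout, we treat Gram--Schmidt as the map that processes the columns $y_1,\dots,y_p$ of $Y(\theta)$ in order and discards a column whose residual after projection onto the span of the previous ones vanishes. The output $X(\theta)$ therefore has $r=\rank Y(\theta)$ orthonormal columns.

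\textbf{Continuity on strata.} Fix a stratum $S_r=\{\theta : \rank Y(\theta)=r\}$. On $S_r$ we want the set of \emph{pivot} indices $J(\theta)$, the columns that are kept, to be locally constant. Because the rank is constant, the Gram--Schmidt residual norms at the pivots stay bounded away from zero near a point $\theta_0\in S_r$, by continuity of $\Phi$, hence of $Y$. Since the total rank is $r$ throughout the neighbourhood, no non-pivot column can acquire a nonzero residual. Hence $J$ is locally constant, say $J=\{j_1<\dots<j_r\}$. On that neighbourhood, $X(\theta)$ is obtained by applying classical Gram--Schmidt to the full-rank matrix $Y_J(\theta)$. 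This is the $Q$-factor of the thin QR decomposition with positive diagonal, $Q=Y_J R^{-1}$, where $R$ is the Cholesky factor of $Y_J^\top Y_J$. Each normalisation divides by a residual bounded away from zero, so $X(\theta)$ is a composition of continuous (indeed smooth, if $\Phi$ is) maps. It follows that $P_{\mathcal U(\theta)}=XX^\top$ is continuous on $S_r$. Even without the pivot argument, one can observe that $XX^\top = Y Y^{\dagger}$, and the pseudoinverse is continuous on sets of constant rank. I would cite this classical fact as a cleaner alternative.

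\textbf{Discontinuity at rank changes.} Suppose $\theta_k\to\theta_0$ with $\rank Y(\theta_k)=r'>r=\rank Y(\theta_0)$. Then $X(\theta_k)$ has $r'$ orthonormal columns and $XX^\top(\theta_k)$ is a rank-$r'$ projector. The set $P_{n,r'}$ is closed, and the rank of an orthogonal projector equals its trace, which is continuous. So any limit of $XX^\top(\theta_k)$ has rank $r'\neq r$ and cannot equal $XX^\top(\theta_0)$. Hence the map is discontinuous at $\theta_0$. For the matrix $X$ itself the argument is the same, since the number of columns jumps, or equivalently $\|X\|_F^2=r$ jumps.

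The only remaining point is to ensure that such sequences exist whenever $\theta_0$ is a genuine rank-change point. I would interpret the statement this way: a rank-change point is one in whose every neighbourhood the rank differs, and by lower semicontinuity of rank it can only be higher nearby. A concrete example illustrates this. For $Y(\theta)=(1,0;\,0,\theta)$, the second basis vector is $\mathrm{sign}(\theta)e_2$ for $\theta\neq 0$ and disappears at $\theta=0$.

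I expect the main obstacle to be the local constancy of the pivot set in the continuity step. That is where one must be careful that a degenerate column stays degenerate. Using the identity $XX^\top=YY^\dagger$ together with continuity of the pseudoinverse on constant-rank sets avoids this issue for the projector, so I would rely on that identity.
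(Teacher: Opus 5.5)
Your discontinuity half is the paper's argument: the trace of an orthogonal projector equals its rank, and the trace is continuous. Your lower-semicontinuity remark about which approaching sequences exist is a useful addition that the paper omits.

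The continuity half, however, rests on a false step. The pivot set is \emph{not} locally constant on a stratum, because rank constancy does not stop a pivot from migrating to an earlier column. Take $Y(\theta)=[\,\theta e_1 \mid e_1\,]\in\R^{n\times 2}$, which has rank $1$ for every $\theta$. At $\theta=0$ the first column is discarded, so $X(0)=e_1$ with pivot set $\{2\}$. For $\theta\neq 0$ the second column is discarded, so $X(\theta)=\mathrm{sign}(\theta)\,e_1$ with pivot set $\{1\}$. Thus the non-pivot column $y_1$ does acquire a nonzero residual, the residual of the pivot $y_2$ drops from $1$ to $0$, and $X(\theta)\to -e_1\neq X(0)$ as $\theta\to 0^-$ inside the stratum. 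Your claim that pivot residuals stay bounded away from zero fails because each residual is taken against the span of \emph{all} earlier columns. That span can jump in dimension even when the total rank cannot.

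The same example shows that on a rank-deficient stratum ($r<p$) the map $\theta\mapsto X(\theta)$ itself need not be continuous. So no argument proves the statement literally for $X$. What does hold is continuity of $X$ on the full-rank stratum $r=p$, where nothing is discarded and your QR/Cholesky argument is correct, and continuity of $P_{\mathcal U(\theta)}=XX^\top$ on every stratum; in the example it is constantly $e_1e_1^\top$.

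The paper's proof has the same blind spot: it asserts a ``fixed sequence of elementary operations'' on each stratum. Like you, though, it actually concludes continuity of the projector.

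Drop the pivot-constancy claim and make your fallback the proof. $XX^\top=YY^{\dagger}$ is the orthogonal projector onto $\mathrm{range}\,Y(\theta)$, and $Y\mapsto Y^{\dagger}$ is continuous along sequences of constant rank. This is more rigorous than the paper's argument.

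If you prefer something elementary, keep the correct half of your pivot idea. With $J=J(\theta_0)$, the columns $Y_J(\theta)$ remain linearly independent near $\theta_0$, since independence is an open condition. By rank constancy they span $\mathrm{range}\,Y(\theta)$ on the stratum, so locally $P_{\mathcal U(\theta)}=Y_J(Y_J^\top Y_J)^{-1}Y_J^\top$, which is manifestly continuous.
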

\begin{proof}
Let $\Theta_k = \{ \theta \in \Theta \mid \rank(Y(\theta)) = k \}$ denote the stratum of rank $k$.
\newline

\textbf{Continuity on $\Theta_k$:} Restricting $\theta$ to a stratum $\Theta_k$, the Gram-Schmidt process involves a fixed sequence of elementary algebraic operations (additions, multiplications) and normalizations. The normalization steps involve division by the norm of intermediate vectors. Since the rank is constant and equal to $k$, the selected vectors are linearly independent, guaranteeing that these norms are strictly non-zero. As a composition of continuous functions on a domain avoiding singularities, the resulting projector $P_{\mathcal{U}(\theta)}$ varies continuously with $\theta$.
\newline 

\textbf{Discontinuity across strata:} Consider the Trace operator, which is a continuous function on the space of matrices $\mathbb{R}^{n \times n}$. For any orthogonal projector $P$, $\text{Tr}(P) = \rank(P)$. Therefore, for any $\theta \in \Theta_k$, we have $\text{Tr}(P_{\mathcal{U}(\theta)}) = k$.
Let $(\theta_j)_{j \in \mathbb{N}}$ be a sequence in $\Theta_k$ converging to $\theta^* \in \Theta_{k'}$ with $k \neq k'$. If the mapping $X$ were continuous at $\theta^*$, we would have:
$$ \lim_{j \to \infty} \text{Tr}(P_{\mathcal{U}(\theta_j)}) = \text{Tr}(P_{\mathcal{U}(\theta^\star)}) \implies \lim_{j \to \infty} k = k' $$
This is a contradiction since $k$ and $k'$ are distinct integers. Thus, $P_{\mathcal{U}(\theta)}$ is discontinuous at any point where the rank of the feature matrix $Y(\theta)$ changes.
\end{proof}

Proposition \ref{prop:continuity_strata} is fundamental since it states that the VarPro cost function $\eqref{eq:varpro_cost}$  is smooth only on on each stratum $\Theta_k$ separately. This makes the definition of a gradient descent across strata difficult. In Section \ref{sec:stick_strata}, we propose to circumvent this difficulty by ``filling'' the space in between strata.

\subsection{Applications to machine learning problems}
For numerous applications in neural network learning algorithms, the VarPro method can be applied:
\begin{itemize}

\item The most obvious application is for regression problems. Consider a function $f: \Omega \rightarrow \R$, then defining $y := (f(x_i))_{1\leq i \leq n}$, the optimization problem reads:

$$
\min_{\theta \in \Theta, \alpha \in \R^p} \frac{1}{n} \sum_{i=1}^n (f(x_i) - \alpha^{\top} Y(\theta,x_i))^2 
$$
is equivalent to:

$$
\min_{\theta \in \Theta} \| y - P_{\mathcal{U}(\theta)} y\|_n^2 
$$
where $\|z\|^2_n = \frac{1}{n} \sum_{i=1}^n z_i^2$.

\item Also PINN formulations correspond to the VarPro framework. Indeed, let $\mathcal{L} : H \rightarrow L^2(\Omega)$ where $H \subset L^2(\Omega)$ be a PDE operator like the Laplacian for example. Also let $\mathcal{B}$ be a boundary operator. Then let $[x_1, \cdots, x_n]$ be sampled in the domain and $[b_{n+1}, \cdots,b_{m+n}]$ be sampled on the boundary. Then build the matrix $A$: 
$$
\left\{
\begin{array}{rl}
A_{i,j} = \mathcal{L} \Phi(\theta_j,x_i) \text{ if } i \leq n \\
A_{i,j} = \mathcal{B} \Phi(\theta_j,b_i) \text{ if } i > n 
\end{array}
\right.
$$
and the vector $v$:
$$
\left\{
\begin{array}{rl}
v_{i} = f(x_i) \text{ if } i \leq n \\
v_{i} = g(b_i) \text{ if } i > n. 
\end{array}
\right.
$$
The PINNs problem is equivalent to minimize:

$$
\min_{\theta, \alpha} G(\theta, \alpha) := \| A(\theta) \alpha - v  \|^2 = \| Q(\theta) R(\theta) \alpha - v  \|^2 .
$$
where $A =: Q R$ where $Q$ is orthonormal and $R$ is upper triangular and invertible if $A$ is full rank. In such case, this is equivalent to:

$$
\min_{\theta, \alpha} \|Q(\theta) \alpha - v  \|^2,
$$
a least square problem of the usual form \eqref{eq:standard_ls}.

\end{itemize}

%For Ritz formulation things get more complicated. Indeed, take a bilinear symmetric form on $H$ and $l$ a linear form on $L^2(\Omega)$. Then the Ritz problem is equivalent to minimizing : $u \rightarrow \frac{1}{2} a(u,u) - l(u)$. The problem is that usually in the bilinear form $a$ some space derivative are hidden and not in the linear form $l$ so we have two different type of samples $(A\Phi(\theta_j,x_i), \Phi(\theta_j,x_i))$ where $A$ is a differential operator. This makes the VarPro approach impossible here as it is.

\section{Geometry and Optimization on the Regularized Projector Manifold} \label{sec:stick_strata}

In this section, we introduce a specific manifold of regularized projectors. This study allows to continuously pass from each stratum $Gr(n,k)$ for $k\leq p$ by introducing a regularization parameter $\varepsilon > 0$.

\subsection{The Regularized Projector Manifold}

We consider the space of matrices generated by a smooth mapping from the Euclidean space $\R^{n \times p}$ to the space of symmetric matrices $\Sym(n)$.

\begin{definition}[Regularized Projector Map]
Let $n, p \in \N$ with $p \le n$ and let $\varepsilon > 0$. The regularized projection map $\pi_\varepsilon : \R^{n \times p} \to \Sym(n)$ is defined by:
\begin{equation}
    \pi_\varepsilon(X) = X (X^\top X + \varepsilon I_p)^{-1} X^\top
\end{equation}
We define the manifold $\mathcal{M}_\varepsilon := \Image(\pi_\varepsilon)$ as the image of this map.
\end{definition}

For brevity, we define the kernel matrix $K(X) := (X^\top X + \varepsilon I_p)^{-1}$. Thus, a point on the manifold is given by $P = X K X^\top$.

\begin{proposition}[Differential of $\pi_\varepsilon$]
Let $X \in \R^{n \times p}$ and $\Delta \in \R^{n \times p}$ be a tangent vector in the parameter space. Let $K = (X^\top X + \varepsilon I_p)^{-1}$ and $P = \pi_\varepsilon(X)$. The Fréchet derivative of $\pi_\varepsilon$ at $X$ in the direction $\Delta$ is given by:
\begin{equation}
    D\pi_\varepsilon(X)[\Delta] = (I_n - P) \Delta K X^\top + \left( (I_n - P) \Delta K X^\top \right)^\top.
\end{equation}
\end{proposition}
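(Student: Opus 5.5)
The plan is a direct product-rule computation on $\pi_\varepsilon(X) = X K(X) X^\top$, followed by an algebraic regrouping into the stated symmetric form.

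\textbf{Step 1: derivative of $K$.} Set $A(X) := X^\top X + \varepsilon I_p$. This matrix is symmetric positive definite for every $X$, since $\varepsilon>0$. Hence $K = A^{-1}$ is smooth on all of $\R^{n\times p}$, and, unlike the Gram--Schmidt construction of Proposition \ref{prop:continuity_strata}, no rank condition is needed. Differentiating $AK = I_p$ gives $DK[\Delta] = -K\, DA[\Delta]\, K$. Since
\[
DA[\Delta] = \Delta^\top X + X^\top \Delta,
\]
we obtain
\[
DK[\Delta] = -K(\Delta^\top X + X^\top\Delta)K .
\]

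\textbf{Step 2: product rule.} Applying the product rule to $XKX^\top$ gives
\[
D\pi_\varepsilon(X)[\Delta] = \Delta K X^\top + X K \Delta^\top - XK\Delta^\top X K X^\top - XKX^\top \Delta K X^\top .
\]

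\textbf{Step 3: regrouping.} Use $P = XKX^\top$ and the symmetry of $K$. The last term is $P\Delta K X^\top$, so the first and last terms combine to $(I_n-P)\Delta K X^\top$. The third term equals $XK\Delta^\top P$, since $XKX^\top = P$. So the second and third terms combine to
\[
XK\Delta^\top (I_n - P) = \bigl((I_n-P)\Delta K X^\top\bigr)^\top,
\]
because $P$ and $K$ are symmetric. Adding the two groups yields the claimed formula. As a consistency check, the result is symmetric, as a derivative of a map into $\Sym(n)$ must be.

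The only step requiring care is this regrouping. Specifically, one must recognize $XK\Delta^\top XKX^\top$ as $XK\Delta^\top P$, rather than as a term involving $P$ on the left. Everything else is routine.
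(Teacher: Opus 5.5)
Your proof is correct and follows the paper's argument essentially step for step. Like the paper, you differentiate $K$ via $D(A^{-1}) = -A^{-1}(DA)A^{-1}$, apply the product rule to $XKX^\top$, and regroup the four terms into $(I_n-P)\Delta K X^\top$ and its transpose using the symmetry of $K$ and $P$; your added remark that $X^\top X + \varepsilon I_p$ is positive definite, so $\pi_\varepsilon$ is smooth on all of $\R^{n\times p}$, is a small justification the paper leaves implicit.
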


\begin{proof}
Let $M(X) = X^\top X + \varepsilon I_p$, so that $K = M(X)^{-1}$. We differentiate the expression $P = X K X^\top$ using the product rule:
\begin{equation*}
    D\pi_\varepsilon(X)[\Delta] = \Delta K X^\top + X (DK[\Delta]) X^\top + X K \Delta^\top.
\end{equation*}
Using the matrix calculus identity $D(M^{-1}) = -M^{-1} (DM) M^{-1}$, we compute the differential of the kernel matrix:
\begin{align*}
    DK[\Delta] &= -K (D(X^\top X)[\Delta]) K \\
               &= -K (\Delta^\top X + X^\top \Delta) K.
\end{align*}
Substituting this back into the expression for $D\pi_\varepsilon$:
\begin{align*}
    D\pi_\varepsilon(X)[\Delta] &= \Delta K X^\top - X \left( K \Delta^\top X K + K X^\top \Delta K \right) X^\top + X K \Delta^\top \\
    &= \Delta K X^\top - (X K \Delta^\top) (X K X^\top) - (X K X^\top) (\Delta K X^\top) + X K \Delta^\top.
\end{align*}
Recall that $P = X K X^\top$. We can group the terms as follows to highlight the projection structure:
\begin{align*}
    D\pi_\varepsilon(X)[\Delta] &= \underbrace{\Delta K X^\top - P (\Delta K X^\top)}_{\text{Terms 1 and 3}} + \underbrace{X K \Delta^\top - (X K \Delta^\top) P}_{\text{Terms 4 and 2}} \\
    &= (I_n - P) \Delta K X^\top + \left( (I_n - P) \Delta K X^\top \right)^\top.
\end{align*}
This confirms the symmetric structure of the differential.
\end{proof}

\subsection{Spectral Analysis and Boundary Behavior}

In this section, we analyze the spectral properties of the matrices in $\mathcal{M}_\varepsilon$. This analysis justifies the structure of the manifold's closure relevant to the optimization problem.

\begin{proposition}[Spectral Characterization]
Let $X \in \R^{n \times p}$. The regularized projector $P = \pi_\varepsilon(X)$ admits the eigendecomposition:
\begin{equation}
    P = U \Lambda U^\top
\end{equation}
where $\Lambda = \diag(\lambda_1, \dots, \lambda_p)$ with $0\leq \lambda_i < 1$ and $U\in \St$
\end{proposition}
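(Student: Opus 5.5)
The plan is to diagonalize through the compact singular value decomposition of $X$. Write $X = U \Sigma V^\top$ with $U \in \St$ (orthonormal columns, completed arbitrarily if $\rank X < p$), $\Sigma = \diag(\sigma_1,\dots,\sigma_p) \succeq 0$ and $V \in O(p)$. This is always possible since $p \le n$. The point of working with the full $p$-column thin SVD rather than the rank-revealing one is that $U$ then lies in $\St$ regardless of rank deficiency, which is exactly what the statement requires.

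The first step is to compute the kernel matrix in this basis:
\[
X^\top X + \varepsilon I_p = V(\Sigma^2 + \varepsilon I_p)V^\top ,
\]
so that $K = V(\Sigma^2 + \varepsilon I_p)^{-1} V^\top$. Here invertibility is guaranteed by $\varepsilon > 0$, even when some $\sigma_i = 0$.

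The second step substitutes this into $P = XKX^\top$. Using $V^\top V = I_p$, we get
\[
P = U \Sigma (\Sigma^2 + \varepsilon I_p)^{-1} \Sigma U^\top = U \Lambda U^\top,
\]
where the diagonal matrices commute and
\[
\lambda_i = \frac{\sigma_i^2}{\sigma_i^2 + \varepsilon}.
\]

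The third step is the elementary bound. Since $\sigma_i^2 \ge 0$ and $\varepsilon > 0$, we have $0 \le \lambda_i < 1$, with $\lambda_i = 0$ exactly when $\sigma_i = 0$.

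Finally, observe that $U \Lambda U^\top$ is a genuine eigendecomposition of $P$. The columns of $U$ are orthonormal eigenvectors with eigenvalues $\lambda_i$, and $P$ vanishes on $\Span(U)^\perp$. So the remaining $n-p$ eigenvalues are $0$, and the stated form captures the whole spectrum.

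I expect no real obstacle. The only delicate point is the rank-deficient case, and it is handled by keeping the full $p$-column thin SVD so that $U \in \St$ while the corresponding $\lambda_i$ are simply $0$.
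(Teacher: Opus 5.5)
Your proposal is correct and follows the paper's proof essentially step for step: thin SVD $X = U\Sigma V^\top$, then $K = V(\Sigma^2+\varepsilon I_p)^{-1}V^\top$, then substitution giving $\lambda_i = \sigma_i^2/(\sigma_i^2+\varepsilon)\in[0,1)$. Your explicit handling of the rank-deficient case (completing $U$ so that $U \in \St$) and your remark that the remaining $n-p$ eigenvalues are $0$ are small clarifications that the paper leaves implicit; note also that the paper writes $X = U\Sigma V$ where $V^\top$ is meant.
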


\begin{proof}
We introduce the SVD of $X$ \textit{i.e} $X = U \Sigma V$ with $U \in St(n,p), V \in O(p)$ and $\Sigma = \diag(\sigma_1, \cdots, \sigma_p)$.
We substitute the SVD of $X$ into the definition of $\pi_\varepsilon$. Recall that $X^\top X = V \Sigma^2 V^\top$. Then:
\begin{align*}
    K(X) &= (V \Sigma^2 V^\top + \varepsilon I_p)^{-1} \\
         &= (V (\Sigma^2 + \varepsilon I_p) V^\top)^{-1} \\
         &= V (\Sigma^2 + \varepsilon I_p)^{-1} V^\top.
\end{align*}
Now substituting $K(X)$ into the expression for $P$:
\begin{align*}
    P &= X K(X) X^\top \\
      &= (U \Sigma V^\top) \left( V (\Sigma^2 + \varepsilon I_p)^{-1} V^\top \right) (V \Sigma U^\top) \\
      &= U \Sigma (\Sigma^2 + \varepsilon I_p)^{-1} \Sigma U^\top \\
      &= U \diag\left( \frac{\sigma_1^2}{\sigma_1^2 + \varepsilon}, \dots, \frac{\sigma_p^2}{\sigma_p^2 + \varepsilon} \right) U^\top.
\end{align*}
Since $\varepsilon > 0$ and $\sigma_i \in \R$, we have $0 \le \sigma_i^2 < \sigma_i^2 + \varepsilon$, which implies $0 \le \lambda_i < 1$.
\end{proof}

\subsection{The Spectral Manifold Definition}

Rather than parameterizing the manifold via matrices in Euclidean space, we define it intrinsically as a collection of symmetric matrices with bounded eigenvalues.

\begin{definition}[Spectral Manifold $\mathcal{M}$]
Let $n, p \in \N$ with $p \le n$. We define the regularized projector manifold $\mathcal{M}$ as the set of symmetric matrices of rank at most $p$ whose non-zero eigenvalues lie strictly in the interval $[0, 1)$.
\begin{equation}
    \mathcal{M} := \left\{ U \Lambda U^\top \in \Sym(n) \mid U \in \St, \ \Lambda = \diag(\lambda_1, \dots, \lambda_p), \ 0 \le \lambda_i < 1 \right\}
\end{equation}
\end{definition}

This manifold bridges the zero matrix and the set of orthogonal projectors. Its closure, $\overline{\mathcal{M}}$, is obtained by allowing the eigenvalues to take the value $1$.
\begin{equation}
    \overline{\mathcal{M}} = \left\{ U \Lambda U^\top \mid U \in \St, \ 0 \le \Lambda \le I_p \right\}.
\end{equation}
The boundary where $\Lambda = I_p$ corresponds exactly to the Grassmannian $\Gr$, represented by orthogonal projectors.

\subsubsection{Quotient Geometry and Topology}

The manifold structure is best understood as a quotient space. Since the spectral definition fixes the frame alignment, we generalize it by parameterizing the manifold using arbitrary symmetric matrices within the subspace. Consider the product:
\[
\mathcal{T} = \St \times \mathcal{S}_p^{(0,1)},
\]
where $\mathcal{S}_p^{(0,1)}$ is the convex set of $p \times p$ symmetric matrices with eigenvalues in $[0, 1)$. The group $O(p)$ acts on both factors: on the frame $U \in \St$ by right multiplication, and on the matrix $S \in \mathcal{S}_p^{(0,1)}$ by conjugation. We define the equivalence relation:
\begin{equation}
    (U, S) \sim (U Q, Q^\top S Q), \quad \forall Q \in O(p).
\end{equation}
The map $\Psi(U, S) = U S U^\top$ is invariant under this action and induces a diffeomorphism between the quotient space and $\mathcal{M}$.

Topologically, this construction identifies $\mathcal{M}$ as the \textbf{associated vector bundle}:
\begin{equation}
    \mathcal{M} \cong \St \times_{O(p)} \mathcal{S}_p^{(0,1)}.
\end{equation}
This is a bundle \emph{over} the Grassmannian $\Gr$. The construction via $\St$ captures the non-trivial ``twisting'' of the bundle: a rotation of the basis within the subspace (a change in $U$) is compensated by a counter-rotation of the intrinsic operator $S$.

\subsection{Tangent Space Characterization}

We characterize the tangent space $T_P \mathcal{M}$ by varying the spectral components. A tangent vector results from a variation in the eigenvalues (changing the regularization strength) and a rotation of the eigenvectors (changing the subspace).

\begin{theorem}[Tangent Space Structure]
Let $P = U \Lambda U^\top \in \mathcal{M}$ with distinct eigenvalues. The tangent space at $P$ is the direct sum of diagonal variations and commutator-based rotations:
\begin{equation}
    T_P \mathcal{M} = \left\{ U \dot{\Lambda} U^\top + [Z, P] \ \mid \ \dot{\Lambda} \in \mathcal{D}_p, \ Z \in \mathfrak{so}(n) \right\}
\end{equation}
where $\mathcal{D}_p$ is the space of diagonal $p \times p$ matrices and $\mathfrak{so}(n)$ is the Lie algebra of skew-symmetric matrices.
\end{theorem}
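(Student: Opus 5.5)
The plan is to prove the two inclusions separately, working through the parametrization $\Psi(U,\Lambda)=U\Lambda U^\top$ of $\mathcal{M}$. Near $P$, we regard $\mathcal{M}$ as the image of the smooth map $(U,\Lambda)\mapsto U\Lambda U^\top$ defined on $\St\times\{\Lambda\ \text{diagonal},\ 0\le\Lambda<1\}$.

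\textbf{Inclusion ($\subseteq$) in the reverse direction, i.e.\ the displayed set lies in $T_P\mathcal{M}$.} Fix $\dot\Lambda\in\mathcal{D}_p$ and $Z\in\so(n)$. Consider the curve
\[
\gamma(t)=e^{tZ}\,U(\Lambda+t\dot\Lambda)U^\top e^{-tZ}.
\]
Since $e^{tZ}U\in\St$ and, for small $t$, the entries of $\Lambda+t\dot\Lambda$ stay in $[0,1)$ (away from the boundary case $\lambda_i=0$, which we treat as an interior point of the stratum under the distinct-eigenvalue hypothesis), the curve stays in $\mathcal{M}$. Differentiating at $t=0$ gives
\[
\dot\gamma(0)=U\dot\Lambda U^\top+ZP-PZ=U\dot\Lambda U^\top+[Z,P].
\]
This uses Proposition~\ref{prop:tangent_so(n)} only implicitly: the same $\so(n)$-action generates curves of frames.

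\textbf{Inclusion ($\subseteq$), i.e.\ $T_P\mathcal{M}$ lies in the displayed set.} Take any smooth curve $P(t)=U(t)\Lambda(t)U(t)^\top$ in $\mathcal{M}$ with $P(0)=P$. Because the eigenvalues of $P$ are distinct, standard analytic perturbation theory for symmetric matrices lets us choose $U(t)$ and $\Lambda(t)$ differentiable, with $\Lambda(t)$ diagonal. Then
\[
\dot P=\dot U\Lambda U^\top+U\dot\Lambda U^\top+U\Lambda\dot U^\top.
\]
Here $\dot U\in T_U\St$, and by Proposition~\ref{prop:tangent_so(n)} we can write $\dot U=ZU$ with $Z\in\so(n)$. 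Substituting, and using $Z^\top=-Z$, gives
\[
\dot P=U\dot\Lambda U^\top+ZP-PZ,
\]
which is exactly the desired form. The directness of the sum comes from a decomposition by blocks. Write everything in an orthonormal basis $[U\ U_\perp]$. The commutator $[Z,P]$ has diagonal $U$–$U$ block $[U^\top ZU,\Lambda]$, which has zero diagonal. Its $U$–$U_\perp$ block is $-\Lambda U^\top ZU_\perp$, and its $U_\perp$–$U_\perp$ block vanishes. The term $U\dot\Lambda U^\top$ lives only on the diagonal of the $U$–$U$ block. Hence the two summands intersect trivially.

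\textbf{Main obstacle.} The hardest step is the differentiability of the spectral decomposition $t\mapsto(U(t),\Lambda(t))$. This is exactly where the hypothesis of distinct eigenvalues is needed, and the first approach is to invoke the implicit function theorem on the eigen-equation. A second difficulty is that the hypothesis must also exclude repeated zero eigenvalues, together with the ambiguity when some $\lambda_i=0$. In that case the corresponding column of $U$ is not determined by $P$, and $[Z,P]$ does not see rotations of it. Since the theorem asserts only equality of sets, this causes no contradiction, but the argument should handle it by citing the distinctness assumption and noting that $\Psi$ still covers a neighborhood of $P$. As a sanity check, a dimension count gives $p+p(n-p)+p(p-1)/2$, matching the quotient description $\St\times_{O(p)}\mathcal{S}_p^{(0,1)}$ restricted to diagonal $\Lambda$ plus the off-diagonal rotations.
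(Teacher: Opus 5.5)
You prove $T_P\mathcal{M}\subseteq\{U\dot\Lambda U^\top+[Z,P]\}$ exactly as the paper does: differentiate $U(t)\Lambda(t)U(t)^\top$ and write $\dot U=ZU$ by Proposition~\ref{prop:tangent_so(n)}. Your additions are correct and cover three things the paper leaves unproved:
\begin{itemize}
\item the reverse inclusion, which you get from the curve $e^{tZ}U(\Lambda+t\dot\Lambda)U^\top e^{-tZ}$;
\item directness of the sum, which you get from the block computation in the basis $[U\ U_\perp]$;
\item a differentiable choice of $U(t),\Lambda(t)$, which you take from perturbation theory.
\end{itemize}

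Your treatment of zero eigenvalues needs correcting. It is not true that the case $\lambda_i=0$ \emph{causes no contradiction}: when $p<n$ and some $\lambda_i=0$, the displayed formula is false. Take $n=2$, $p=1$, $P=0=0\cdot uu^\top$. The displayed set is then the line $\R\,uu^\top$, which depends on the arbitrary choice of $u$. Yet $t\mapsto t\,ww^\top$, $t\in[0,1)$, lies in $\mathcal{M}$ for every unit vector $w$. Near $P$, $\mathcal{M}$ is the cone of positive semidefinite matrices of rank at most one, which is not a smooth submanifold.

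The general case is the same. If $\lambda_i=0$, then $P+t\,ww^\top\in\mathcal{M}$ for small $t\ge 0$ and every $w$ orthogonal to the $u_j$ with $j\neq i$. Unless $w\parallel u_i$, the velocity $ww^\top$ has a nonzero $U_\perp$--$U_\perp$ block. Your own block computation shows every element of the displayed set has that block equal to zero.

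So do not treat $\lambda_i=0$ as an interior point. Instead add the hypothesis $0<\lambda_i<1$ for all $i$, i.e.\ $\rank P=p$; the paper tacitly needs it too. Under that hypothesis your argument is complete:
\begin{itemize}
\item Each $\lambda_i$ is a simple eigenvalue separated from the eigenvalue $0$, so the implicit function theorem gives differentiable $\lambda_i(t),u_i(t)$.
\item The bound $\rank P(t)\le p$ forces the remaining $n-p$ eigenvalues to vanish, so $P(t)=U(t)\Lambda(t)U(t)^\top$.
\item Your curve $\Lambda+t\dot\Lambda$ stays in $(0,1)^p$ for small $|t|$ of either sign.
\item Your count $p+p(p-1)/2+p(n-p)=np-p(p-1)/2$ equals the dimension of the manifold of rank-$p$ positive semidefinite matrices, with which $\mathcal{M}$ coincides near $P$.
\end{itemize}
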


\begin{proof}
Consider a curve $P(t) = U(t) \Lambda(t) U(t)^\top$. Differentiating at $t=0$:
\[
\dot{P} = \dot{U} \Lambda U^\top + U \dot{\Lambda} U^\top + U \Lambda \dot{U}^\top.
\]
Since $U \in \St$, the variation $\dot{U}$ can be expressed as $\dot{U} = Z U$ for some skew-symmetric matrix $Z \in \mathfrak{so}(n)$ by Proposition \ref{prop:tangent_so(n)}. Substituting $\dot{U} = Z U$:
\begin{align*}
    \dot{P} &= Z U \Lambda U^\top + U \dot{\Lambda} U^\top + U \Lambda (Z U)^\top \\
            &= Z (U \Lambda U^\top) + U \dot{\Lambda} U^\top + (U \Lambda U^\top) Z^\top \\
            &= Z P + U \dot{\Lambda} U^\top - P Z \quad (\text{since } Z^\top = -Z) \\
            &= [Z, P] + U \dot{\Lambda} U^\top.
\end{align*}
The term $[Z, P]$ captures the motion along the Grassmannian (rotating the subspace), while $U \dot{\Lambda} U^\top$ captures the motion orthogonal to the Grassmannian fibers (scaling the eigenvalues).
\end{proof}

For optimization algorithms, it is fundamental to express the tangent vector at some $P \in \mathcal{M}$ with respect to the parameter space with variable $X \in \R^{n\times p}$. This is done in Corollary \ref{cor:exp_tangentspace_parameter}.
\begin{corollary}[Components of the Differential]\label{cor:exp_tangentspace_parameter}
Let $X = U \Sigma V^\top$ be the SVD of $X$. Consider a tangent vector $\Delta \in \R^{n \times p}$ decomposed as $\Delta = U \Omega_A V^\top + U_\perp \Omega_B V^\top$.
The differential $\eta = D\pi_\varepsilon(X)[\Delta]$ corresponds to a tangent vector $\nu = U \dot{\Lambda} U^\top + [Z, P]$ where:
\begin{enumerate}
    \item The eigenvalue variation $\dot{\Lambda}$ (diagonal) is driven by the diagonal of $\Omega_A$:
    \begin{equation}
        \dot{\lambda}_i = \frac{2}{\sqrt{\varepsilon}} \sqrt{\lambda_i} (1 - \lambda_i)^{3/2} \ (\Omega_A)_{ii}.
    \end{equation}
    \item The subspace rotation $Z$ (antisymmetric) has components:
    \begin{itemize}
        \item External rotation ($Z_{21}$):
        \begin{equation}
            Z_{21} = \frac{1}{\sqrt{\varepsilon}} \Omega_B \ \diag\left( \sqrt{\frac{1-\lambda_i}{\lambda_i}} \right).
        \end{equation}
        \item Internal rotation ($Z_{11}$): Depends on the off-diagonal parts of $\Omega_A$ (where $\lambda_i \neq \lambda_j$).
    \end{itemize}
Note that we used the notation:

$$
\forall 1 \leq i \leq p, \ \lambda_i := \frac{\sigma_i^2}{\sigma_i^2 + \varepsilon}.
$$    
    
\end{enumerate}
\end{corollary}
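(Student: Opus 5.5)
The plan is to compute $D\pi_\varepsilon(X)[\Delta]$ explicitly in the orthonormal frame $[U\;U_\perp]$ of $\R^n$. I will then do the same for a generic tangent vector $U\dot\Lambda U^\top + [Z,P]$ from the Tangent Space Structure theorem, and identify the two block by block. I will work under the standing hypotheses of that theorem (distinct eigenvalues), and additionally assume $\sigma_i>0$, i.e.\ $\lambda_i>0$, so that the formula for $Z_{21}$ makes sense.

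\textbf{Step 1: the differential in the frame.} From the SVD I use $K = V(\Sigma^2+\varepsilon I_p)^{-1}V^\top$, hence $KX^\top = V D U^\top$ with $D := \Sigma(\Sigma^2+\varepsilon I_p)^{-1}$ diagonal. Substituting the decomposition of $\Delta$ gives
\[
\Delta K X^\top = (U\Omega_A + U_\perp \Omega_B) D U^\top .
\]
Next, $P = U\Lambda U^\top$ gives $(I_n-P)U = U(I_p-\Lambda)$ and $(I_n-P)U_\perp = U_\perp$. Therefore
\[
(I_n-P)\Delta K X^\top = U (I_p-\Lambda)\Omega_A D U^\top + U_\perp \Omega_B D U^\top .
\]
By the Differential of $\pi_\varepsilon$ proposition, $\eta$ is this matrix plus its transpose. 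In block form:
\begin{itemize}
\item the $(U,U)$ block is $(I_p-\Lambda)\Omega_A D + D\Omega_A^\top(I_p-\Lambda)$;
\item the $(U_\perp,U)$ block is $\Omega_B D$;
\item the $(U,U_\perp)$ block is $(\Omega_B D)^\top$;
\item the $(U_\perp,U_\perp)$ block is $0$.
\end{itemize}

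\textbf{Step 2: the generic tangent vector in the same frame.} I write $Z = \begin{pmatrix} Z_{11} & -Z_{21}^\top \\ Z_{21} & Z_{22}\end{pmatrix}$ with $Z_{11}\in\so(p)$. Since $P$ is block-diagonal $\diag(\Lambda,0)$, the commutator $[Z,P]$ has the following blocks:
\begin{itemize}
\item $(1,1)$ block $[Z_{11},\Lambda]$;
\item $(2,1)$ block $Z_{21}\Lambda$;
\item $(1,2)$ block its transpose;
\item $(2,2)$ block $0$.
\end{itemize}
In particular $Z_{22}$ is invisible, which is consistent with its absence from the statement. The term $U\dot\Lambda U^\top$ contributes only the diagonal $\dot\Lambda$ in the $(1,1)$ block.

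\textbf{Step 3: matching blocks.}
\begin{itemize}
\item \emph{Diagonal of the $(1,1)$ block.} The commutator $[Z_{11},\Lambda]$ has zero diagonal, so $\dot\lambda_i = 2(1-\lambda_i)\,d_i\,(\Omega_A)_{ii}$ with $d_i=\sigma_i/(\sigma_i^2+\varepsilon)$. Using $\sigma_i^2 = \varepsilon\lambda_i/(1-\lambda_i)$ and $\sigma_i^2+\varepsilon = \varepsilon/(1-\lambda_i)$, one gets $d_i = \sqrt{\lambda_i}\sqrt{1-\lambda_i}/\sqrt{\varepsilon}$. This yields the claimed factor $\frac{2}{\sqrt\varepsilon}\sqrt{\lambda_i}(1-\lambda_i)^{3/2}$.
\item \emph{The $(2,1)$ block.} Matching $Z_{21}\Lambda = \Omega_B D$ requires inverting $\Lambda$, hence the assumption $\lambda_i>0$. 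It gives $Z_{21} = \Omega_B D\Lambda^{-1}$, and $d_i/\lambda_i = \frac{1}{\sqrt\varepsilon}\sqrt{(1-\lambda_i)/\lambda_i}$, as claimed.
\item \emph{Off-diagonal entries of the $(1,1)$ block.} Here $(Z_{11})_{ij}(\lambda_j-\lambda_i)$ must equal $(1-\lambda_i)(\Omega_A)_{ij}d_j + d_i(\Omega_A)_{ji}(1-\lambda_j)$. This is solvable for the skew $Z_{11}$ exactly when $\lambda_i\neq\lambda_j$. The right-hand side is symmetric in $(i,j)$ and the factor $\lambda_j-\lambda_i$ is antisymmetric, so the resulting $Z_{11}$ is automatically skew. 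This justifies the qualitative statement on internal rotations.
\end{itemize}

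\textbf{Main obstacle.} The difficulty is not computational but one of bookkeeping. I must fix the orientation convention for $Z_{21}$ (lower-left block, acting from $\mathcal{U}$ to $\mathcal{U}^\perp$, in line with Remark~\ref{rem:ZopMMperp}) so that the sign comes out positive. I must also state clearly the degenerate cases, $\lambda_i=0$ or repeated eigenvalues, where $Z_{21}$ or $Z_{11}$ is not uniquely determined; the corollary implicitly excludes them.
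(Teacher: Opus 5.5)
Your proposal is correct and follows the paper's proof essentially step by step. Both write $\nu = U\dot\Lambda U^\top + [Z,P]$ and $\eta = M + M^\top$ in block form in the frame $[U,\,U_\perp]$, match the $(2,1)$ block and the diagonal of the $(1,1)$ block, and convert $d_i/\lambda_i = 1/\sigma_i$ into the stated $\lambda_i$-expressions. Your explicit solution for the off-diagonal entries of $Z_{11}$ (skew when $\lambda_i \neq \lambda_j$) and the caveat $\lambda_i > 0$ go slightly beyond what the paper writes, but they do not change the route.
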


\begin{proof}
We identify the components of the differential $\eta = D\pi_\varepsilon(X)[\Delta]$ by matching them against the general tangent structure derived in the Theorem.

\paragraph{Step 1: Block Structure of the Tangent Vector.}
Let $\mathcal{B} = [U, U_\perp]$ be the basis aligned with the SVD of $X$. In this frame, $P = \operatorname{diag}(\Lambda, 0)$. For a general tangent vector $\nu = U \dot{\Lambda} U^\top + [Z, P]$, we decompose the skew-symmetric matrix $Z$ as $\begin{psmallmatrix} Z_{11} & -Z_{21}^\top \\ Z_{21} & Z_{22} \end{psmallmatrix}$. The block form of $\nu$ is:
\begin{equation} \label{eq:target_block}
    \nu_{\mathcal{B}} = [Z, P]_{\mathcal{B}} + \begin{pmatrix} \dot{\Lambda} & 0 \\ 0 & 0 \end{pmatrix} 
    = \begin{pmatrix} \dot{\Lambda} + [Z_{11}, \Lambda] & \Lambda Z_{21}^\top \\ Z_{21} \Lambda & 0 \end{pmatrix}.
\end{equation}
Note that the $(2,2)$ block vanishes because $P$ is zero on the subspace orthogonal to $U$.

\paragraph{Step 2: Computation of the Differential $\eta$.}
We compute $\eta = M + M^\top$ where $M = (I - P) \Delta K X^\top$.
First, using the SVD $X = U \Sigma V^\top$, we simplify the term $K X^\top$:
\[
    K X^\top = (X^\top X + \varepsilon I)^{-1} X^\top 
    = V (\Sigma^2 + \varepsilon I)^{-1} \Sigma U^\top 
    = V D_\sigma U^\top,
\]
where $D_\sigma$ is diagonal with entries $(D_\sigma)_{ii} = \frac{\sigma_i}{\sigma_i^2 + \varepsilon}$.
Next, substituting $\Delta = U \Omega_A V^\top + U_\perp \Omega_B V^\top$ and applying the projection $(I-P)U = U(I-\Lambda)$:
\begin{align*}
    M &= \left[ U(I-\Lambda)U^\top + U_\perp U_\perp^\top \right] \left( U \Omega_A V^\top + U_\perp \Omega_B V^\top \right) V D_\sigma U^\top \\
      &= \left[ U(I-\Lambda)\Omega_A + U_\perp \Omega_B \right] D_\sigma U^\top.
\end{align*}
In the basis $\mathcal{B}$, $M$ and the resulting symmetric $\eta = M + M^\top$ are:
\[
    M_{\mathcal{B}} = \begin{pmatrix} (I-\Lambda)\Omega_A D_\sigma & 0 \\ \Omega_B D_\sigma & 0 \end{pmatrix}, \quad 
    \eta_{\mathcal{B}} = \begin{pmatrix} (I-\Lambda)\Omega_A D_\sigma + D_\sigma \Omega_A^\top (I-\Lambda) & D_\sigma \Omega_B^\top \\ \Omega_B D_\sigma & 0 \end{pmatrix}.
\]

\paragraph{Step 3: Matching and Scalar Conversion.}
Comparing the blocks of $\nu_{\mathcal{B}}$ (Eq. \ref{eq:target_block}) and $\eta_{\mathcal{B}}$ yields the solutions.

\textbf{1. Subspace Rotation ($Z_{21}$):} Matching the $(2,1)$ blocks gives $Z_{21} \Lambda = \Omega_B D_\sigma$, or $Z_{21} = \Omega_B D_\sigma \Lambda^{-1}$. 
Using the relation $\lambda_i = \frac{\sigma_i^2}{\sigma_i^2 + \varepsilon}$, we derive the scalar conversion:
\[
    (D_\sigma \Lambda^{-1})_{ii} = \frac{\sigma_i}{\lambda_i(\sigma_i^2 + \varepsilon)} = \frac{1}{\sigma_i} = \frac{1}{\sqrt{\varepsilon}} \sqrt{\frac{1-\lambda_i}{\lambda_i}}.
\]
Thus, $Z_{21} = \frac{1}{\sqrt{\varepsilon}} \Omega_B \operatorname{diag}\left( \sqrt{\frac{1-\lambda_i}{\lambda_i}} \right)$.

\textbf{2. Eigenvalue Variation ($\dot{\Lambda}$):} Matching the diagonal of the $(1,1)$ blocks (where $[Z_{11}, \Lambda]$ vanishes):
\[
    \dot{\lambda}_i = 2 (1-\lambda_i) (\Omega_A)_{ii} (D_\sigma)_{ii}.
\]
Using $(D_\sigma)_{ii} = \lambda_i / \sigma_i$ and the previous expression for $1/\sigma_i$:
\[
    \dot{\lambda}_i = 2 (1-\lambda_i) \lambda_i (\Omega_A)_{ii} \left[ \frac{1}{\sqrt{\varepsilon}} \frac{\sqrt{1-\lambda_i}}{\sqrt{\lambda_i}} \right] 
    = \frac{2}{\sqrt{\varepsilon}} \sqrt{\lambda_i} (1-\lambda_i)^{3/2} (\Omega_A)_{ii}.
\]
\end{proof}

\subsection{Optimization Landscape}

We study the quadratic approximation objective $f: \overline{\mathcal{M}} \to \R$ for a target vector $y \in \R^n$. Using the spectral form $P = U \Lambda U^\top$,  one gets:
$$
\begin{array}{rl}
    f(P) =& \frac{1}{2} \| (I_n - P) y \|^2 \\
    =& \frac{1}{2} \| (I_n - U \Lambda U^\top) y \|^2\\
    =& \frac{1}{2} \| U(I_n - \Lambda) U^\top y \|^2 + \frac{1}{2}  \| (I_n - UU^\top) y \|^2\\
    =& \frac{1}{2}\sum_{i=1}^p (1 - \lambda_i)^2 (u_i^\top y)^2 + \frac{1}{2}  \| (I_n - UU^\top) y )\|^2.
\end{array}
$$
where $u_i$ are the columns of $U$.

\subsubsection{Critical Points and Stability}

We analyze the critical points of $f$ to understand the convergence properties of optimization algorithms on $\mathcal{M}$.

\begin{proposition}[Gradient and Critical Points]
The gradient of $f$ vanishes in the interior $\mathcal{M}$ (where $\lambda_i < 1$) if and only if the subspace $U$ is orthogonal to the data vector $y$.
\begin{equation}
    \nabla f(P) = 0 \iff P y = 0.
\end{equation}
\end{proposition}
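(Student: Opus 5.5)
The plan is to read $f(P)=\tfrac12\|(I_n-P)y\|^2$ as the restriction to $\mathcal{M}$ of a smooth function on $\Sym(n)$. Then $P$ is critical exactly when the directional derivative $Df(P)[\nu]$ vanishes for every tangent vector $\nu\in T_P\mathcal{M}$, with $T_P\mathcal{M}$ given by the Tangent Space Structure theorem. For $\nu$ symmetric, differentiating $\tfrac12 y^\top (I_n-P)^2 y$ gives
\begin{equation*}
Df(P)[\nu] = -\tfrac12\, y^\top\bigl(\nu(I_n-P)+(I_n-P)\nu\bigr)y = -\,y^\top (I_n-P)\,\nu\, y .
\end{equation*}
I then test this formula separately on the two families of tangent directions: the eigenvalue variations $U\dot\Lambda U^\top$ and the rotations $[Z,P]$.

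\textbf{Necessity.} Take $\nu=u_iu_i^\top$, that is, $\dot\Lambda=e_ie_i^\top$. Since $(I_n-P)u_i=(1-\lambda_i)u_i$, we get
\begin{equation*}
Df(P)[u_iu_i^\top]=-(1-\lambda_i)(u_i^\top y)^2 .
\end{equation*}
In the interior we have $\lambda_i<1$, so criticality forces $u_i^\top y=0$ for every $i$. Hence $U^\top y=0$, and therefore $Py=U\Lambda U^\top y=0$. This direction is available whether or not the eigenvalues are distinct, so the distinct-eigenvalue hypothesis of the tangent theorem is not needed here. A side remark from the same formula: at a vanishing eigenvalue $\lambda_i=0$, the admissible direction $\dot\lambda_i>0$ is a strict descent direction whenever $u_i^\top y\neq 0$.

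\textbf{Sufficiency.} Assume $Py=0$. For a rotation direction,
\begin{equation*}
Df(P)[[Z,P]]=-y^\top(I_n-P)ZPy+y^\top(I_n-P)PZy .
\end{equation*}
The first term vanishes because $Py=0$. For the second, $P$ commutes with $I_n-P$, so $y^\top(I_n-P)P=\bigl((I_n-P)Py\bigr)^\top=0$. For the eigenvalue directions, $Py=0$ gives $\lambda_i\,u_i^\top y=0$, so every $i$ with $\lambda_i>0$ contributes zero.

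\textbf{Main obstacle.} The delicate point is the indices with $\lambda_i=0$. There $Py=0$ says nothing about $u_i^\top y$, yet the derivative in the direction of $u_iu_i^\top$ equals $-(u_i^\top y)^2$. So the equivalence is exact only if such $u_i$ are not counted as part of the frame. I would resolve this by taking $U$ in the statement to be the eigenvectors with nonzero eigenvalue, so that $U$ spans the range of $P$. Equivalently, at $\lambda_i=0$ the point lies on the boundary stratum where the rank drops, and only one-sided variations $\dot\lambda_i\ge 0$ are admissible. With this convention $Py=0$ is equivalent to $U^\top y=0$, and the two computations above close the equivalence. I would state this convention explicitly at the start of the proof, since it is exactly where the rank-deficient strata of Proposition~\ref{prop:continuity_strata} resurface.
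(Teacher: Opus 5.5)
Your necessity step is the paper's entire proof. The paper computes $\partial f/\partial\lambda_i=-(1-\lambda_i)(u_i^\top y)^2$, concludes $U^\top y=0$ and hence $Py=0$, and never addresses the converse. Your sufficiency computation is correct where it applies: rotations $[Z,P]$ contribute nothing once $Py=0$, and eigenvalue directions with $\lambda_i>0$ contribute nothing since $\lambda_i u_i^\top y=0$. You are also right that the displayed ``$\Leftarrow$'' fails when some $\lambda_i=0$: $P=0$ satisfies $Py=0$, yet $Df(0)[uu^\top]=-(u^\top y)^2<0$ for $u$ aligned with $y$.

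The gap is in your repair. The clause ``equivalently, \dots\ only one-sided variations $\dot\lambda_i\ge 0$ are admissible'' does not close the equivalence; it breaks it, as your own side remark shows. Suppose $\rank P<p$, $Py=0$ and $y\neq0$, and set $\hat y=y/\|y\|\in\ker P$. Then $P+t\hat y\hat y^\top\in\mathcal{M}$ for $0\le t<1$, and $f(P+t\hat y\hat y^\top)=\tfrac12(1-t)^2\|y\|^2<f(P)$. So once admissible one-sided directions are counted, no rank-deficient point with $Py=0$ is stationary, whichever frame you pick for $\ker P$. (Necessity, by contrast, survives the one-sided reading, since $-(u_i^\top y)^2\ge0$ still forces $u_i^\top y=0$.) Dropping the zero-eigenvalue columns from $U$ rescues the equivalence only if $\nabla f$ means the gradient of $f$ along the fixed-rank stratum through $P$, whose tangent space excludes these rank-raising directions. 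That is a different notion of criticality, not an equivalent formulation, and you must say which one you prove.

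There are two clean correct versions. The first restricts to the rank-$p$ part $0<\lambda_i<1$. There $\Lambda$ is invertible, $Py=0\iff U^\top y=0$, and your computations close the argument. For repeated eigenvalues, use that every tangent vector there is $USU^\top+U_\perp CU^\top+UC^\top U_\perp^\top$ with $S$ symmetric, and each term is annihilated by $U^\top y=0$ in $-y^\top(I_n-P)\nu y$. The second keeps the paper's formal tangent space with a fixed full frame and two-sided $\dot\Lambda$, and states the conclusion as $U^\top y=0$ rather than $Py=0$; that is what both your necessity step and the paper's actually prove.
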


\begin{proof}
One has:
\[
\frac{\partial f}{\partial \lambda_i} = - (1-\lambda_i) (u_i^\top y)^2.
\]
For a critical point in the interior, we require $\frac{\partial f}{\partial \lambda_i} = 0$. Since $\lambda_i < 1$, this forces $(u_i^\top y)^2 = 0$ for all $i$. Therefore, $U^\top y = 0$, which implies $P y = U \Lambda U^\top y = 0$.
\end{proof}

\begin{theorem}[Boundary Classification]\label{th:boundary_classif}
    Let $P = U \Lambda U^\top \in \overline{\mathcal{M}}$ be a point on the boundary. The point $P$ is a critical point of the objective $f(P) = \frac{1}{2}\|(I-P)y\|^2$ if and only if the subspace $U$ is a critical point of the projection error on the Grassmannian.
    Specifically:
    \begin{enumerate}
        \item If $\Image(U)$ contains $y$, then $P$ is a \textbf{Global Minimum}.
        \item If $\Image(U)$ does not contain $y$ (but is a critical point), then $P$ is an \textbf{Unstable Saddle Point}.
    \end{enumerate}
\end{theorem}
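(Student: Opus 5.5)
The argument works in the spectral coordinates $(U,\Lambda)$ with the formula
\[
f(P) = \tfrac12\sum_{i=1}^p (1-\lambda_i)^2 (u_i^\top y)^2 + \tfrac12\|(I_n-UU^\top)y\|^2 ,
\]
and reads "boundary point" as $\Lambda = I_p$, i.e.\ $P = UU^\top \in \Gr$. Since $\overline{\mathcal{M}}$ is a manifold with boundary, criticality at $P$ means that the first variation of $f$ vanishes along every admissible direction: tangential directions $[Z,P]$ with $Z\in\so(n)$, and inward directions $U\dot\Lambda U^\top$ with $\dot\Lambda \le 0$.

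\textbf{Step 1 (eigenvalue directions).} At $\lambda_i=1$ we have $\partial f/\partial\lambda_i = -(1-\lambda_i)(u_i^\top y)^2 = 0$. So the inward directions contribute nothing to first order, and criticality reduces to the tangential directions.

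\textbf{Step 2 (rotation directions).} Restricted to $\Lambda=I_p$, $f$ coincides with the Grassmannian functional of Theorem \ref{th:landscape}. The variations $[Z,P]$ with $Z=\begin{psmallmatrix}0&-Z_{21}^\top\\ Z_{21}&0\end{psmallmatrix}$ are exactly $d\pi$ of the horizontal vectors $U_\perp Z_{21}$; the $Z_{11}$ and $Z_{22}$ parts act trivially on $P$. Hence $P$ is critical iff $\mathrm{grad} F(U) = -(I_n-UU^\top)yy^\top U = 0$, which is the claimed equivalence. By Theorem \ref{th:landscape}, either $y\in\Image(U)$ or $y\perp\Image(U)$.

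\textbf{Step 3 (classification).} If $y\in\Image(U)$, then $f(P)=0$. Since $f\ge 0$ on $\overline{\mathcal{M}}$, $P$ is a global minimum.

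If instead $y\perp\Image(U)$ with $y\neq 0$, I exhibit a direction of strict decrease that stays on the boundary. Following the proof of Theorem \ref{th:landscape}, take the horizontal direction $H=yu^\top$ with $u$ a unit vector. The Riemannian Hessian quadratic form there equals $-\|y\|^4<0$. Along the geodesic in direction $H$, which remains in $\Gr\subset\overline{\mathcal{M}}$, $f$ strictly decreases at second order. So $P$ is not a local minimum.

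I would also record that $P$ is not a local maximum. Along inward eigenvalue directions,
\[
f = \tfrac12\sum_i (1-\lambda_i)^2 (u_i^\top y)^2 + \tfrac12\|y\|^2 = \tfrac12\|y\|^2
\]
is constant, because every $u_i^\top y = 0$. At the same time there is a genuine descent direction, so the point is an unstable saddle in the sense of having a negative-curvature escape direction. I flag that "saddle" here means "critical but not a local minimum". Since $f$ attains its global maximum $\tfrac12\|y\|^2$ there, strict ascent directions cannot be expected.

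\textbf{Main obstacle.} The delicate step is Step 1 and its combination with Step 2: making precise what "critical" means at a boundary point where $\mathcal{M}$ degenerates, with eigenvalues equal to $1$ and possibly repeated. This is where the distinct-eigenvalue tangent characterization does not directly apply. I would handle it by working in the parametrization $(U,S)\mapsto USU^\top$ with $S$ near $I_p$. This splits variations cleanly into $\delta S$ (symmetric, with $\delta S\preceq 0$ for inward directions) and horizontal $\delta U$. The derivative with respect to $S$ at $S=I_p$ is $-U^\top(I-P)yy^\top U=0$ identically, so only $\delta U$ matters. This reduces everything to the Grassmannian computation above.
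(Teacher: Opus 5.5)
Your proof is correct under the reading you announce (boundary $=$ the corner $\Lambda=I_p$, i.e.\ $P\in\Gr$), but it is organized differently from the paper's. The paper only writes the eigenvalue condition $\partial f/\partial\lambda_i=-(1-\lambda_i)(u_i^\top y)^2=0$ and then splits into subcases. When all $\lambda_j=1$, it asserts without argument that $U$ is critical for the Grassmannian error, invokes Theorem~\ref{th:landscape}, and notes only that $f$ is maximal. When some $\lambda_j<1$, it uses $u_j^\top y=0$ and the explicit perturbation $u_j(t)=u_j\cos t+\hat r\sin t$ (coupled with $\lambda_j(t)=t$ if $\lambda_j=0$) in the form $f=\tfrac12\|y\|^2+\tfrac12\sum_i(\lambda_i^2-2\lambda_i)(u_i^\top y)^2$, which gives a decrease of order $t^2$, resp.\ $t^3$.

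You instead actually prove the equivalence of criticality, which the paper never does. At $S=I_p$ the $S$-derivative $-U^\top(I-P)yy^\top U$ vanishes identically, and the rotational part of the tangent cone is exactly $d\pi(\mathcal{H}_U)$. The parametrization $(U,S)\mapsto USU^\top$ also sidesteps the paper's tangent-space theorem, whose distinct-eigenvalue hypothesis fails at $\Lambda=I_p$. Your escape direction $H=yu^\top$ makes explicit what the paper leaves implicit in its all-$\lambda_j=1$ subcase.

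What the paper's route buys is scope. Its proof also handles points $U\Lambda U^\top$ with some eigenvalues below $1$ (even equal to $0$), which your Steps 1--2 do not reach. There the eigenvalue derivative forces $u_j^\top y=0$ instead of vanishing automatically, and $f$ no longer coincides with the Grassmannian functional. Your restriction is defensible, since it is the reading under which the ``iff'' is true. For example, take $\lambda_1=1$, $\lambda_2=\tfrac12$, $y=u_2$: then $\Image(U)$ contains $y$, so $U$ is critical on $\Gr$, yet $\partial f/\partial\lambda_2=-\tfrac12\neq0$. To cover all of $\overline{\mathcal{M}}\setminus\mathcal{M}$, add one observation. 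Eigenvalue criticality plus rotational criticality along $[Z,P]$ force either $y\in\Span\{u_i:\lambda_i=1\}$ (so $f=0$) or $U^\top y=0$. In the latter case, rotating some $u_i$ with $\lambda_i=1$ toward $y/\|y\|$ lowers $f$ by $\tfrac12\|y\|^2\sin^2 t$.

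One wording slip: your sentence that $P$ is not a local maximum is false as written. Since $0\preceq I-P\preceq I$ on $\overline{\mathcal{M}}$, we have $f\le\tfrac12\|y\|^2$ everywhere, so $P$ is a non-strict global maximum, as you note yourself two sentences later. Constancy along $U\,\delta S\,U^\top$ only shows it is not a \emph{strict} local maximum. This matches the paper's use of ``saddle'' for the maxima in Theorem~\ref{th:landscape} and does not affect the result.
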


\begin{proof}
    We analyze the local landscape using the decoupled objective form:
    \begin{equation}\label{eq:decoupled}
        f(U, \Lambda) = \frac{1}{2}\sum_{i=1}^p (1 - \lambda_i)^2 (u_i^\top y)^2 + \frac{1}{2}  \| (I_n - UU^\top) y )\|^2.
    \end{equation}

\paragraph{1. First-Order Optimality Conditions (Criticality)}
The tangent space variations allow us to differentiate with respect to $\lambda_i$ and the subspace $U$.

First, considering the derivative with respect to an eigenvalue $\lambda_i$:
\begin{equation}
    \frac{\partial f}{\partial \lambda_i} = -(1 - \lambda_i)(u_i^\top y)^2.
\end{equation}
For $P$ to be a critical point, this derivative must vanish. This yields the necessary condition:
$$
\forall i \in \{1, \dots, p\}: \quad \lambda_i = 1 \quad \text{or} \quad u_i^\top y = 0.
$$

\paragraph{2. Case: $\Image(U)$ contains $y$ (Global Minimum)}
If $y \in \Image(U)$, then the projection residual vanishes: $\| (I - UU^\top) y \|^2 = 0$. Furthermore, as $(U,\Lambda)$ is a critical point $\sum_{i=1}^p (1 - \lambda_i)^2 (u_i^\top y)^2 = 0$ so that $f(U,\Lambda) = \frac{1}{2}  \| (I_n - UU^\top y )\|^2 = 0$ which corresponds to a global optimum. 

\paragraph{3. Case: $\Image(U)$ does not contain $y$ (Unstable Saddle Point)}
Suppose $y \notin \Image(U)$. Then the residual vector $r = (I - UU^\top)y$ is non-zero. Note that $r \perp \Image(U)$.

Suppose that for all $1 \leq j \leq p$, $\lambda_j = 1$. In such case, $f(U,\Lambda) = \frac{1}{2} \| (I_n - U U^\top) y \|^2$. The element $U \in \St$ is then a critical value of $U \rightarrow \frac{1}{2} \| (I_n - U U^\top) y \|^2$. By Theorem \ref{th:landscape}, as $y \notin \Image(U)$ then $y \perp \Image(U)$ and so $f(U,\Lambda) = \frac{1}{2} \| y\|^2$ is maximal. 

Now, suppose that there exists $1 \leq j \leq p$ such that $\lambda_j \neq 1$, the condition derived in step 1 implies $u_j^\top y = 0$. This means $u_j \perp y$.

We verify the instability by examining a specific perturbation in the tangent space: rotating the eigenvector $u_j$ towards the residual $r$ and increasing its eigenvalue.

Let us rearrange the objective function slightly. Using the Pythagorean identity $\|y\|^2 = \sum (u_i^\top y)^2 + \|(I-UU^\top)y\|^2$, we can rewrite \eqref{eq:decoupled} as:
\begin{align}
    f(U, \Lambda) &= \frac{1}{2} \sum_{i=1}^p (1-\lambda_i)^2 (u_i^\top y)^2 + \frac{1}{2} \left( \|y\|^2 - \sum_{i=1}^p (u_i^\top y)^2 \right) \\
    &= \frac{1}{2} \|y\|^2 + \frac{1}{2} \sum_{i=1}^p \left[ (1-\lambda_i)^2 - 1 \right] (u_i^\top y)^2 \\
    &= \frac{1}{2} \|y\|^2 + \frac{1}{2} \sum_{i=1}^p (\lambda_i^2 - 2\lambda_i) (u_i^\top y)^2.
\end{align}

Consider the perturbation of the pair $(u_j, \lambda_j)$ parameterized by $t > 0$:
\begin{itemize}
    \item Rotate $u_j$ towards the normalized residual $\hat{r} = r/\|r\|$: 
    $$u_j(t) = u_j \cos t + \hat{r} \sin t.$$
    Since $u_j \perp y$ (from criticality) and $\hat{r} \parallel r$, the overlap becomes:
    $$u_j(t)^\top y = 0 \cdot \cos t + (\hat{r}^\top y) \sin t = \|r\| \sin t.$$
    \item Perturb the eigenvalue: $\lambda_j(t) = \lambda_j + \delta(t)$, where $\delta(t) > 0$.
\end{itemize}

The change in the objective function corresponding to index $j$ is proportional to:
$$\Delta f \propto (\lambda_j(t)^2 - 2\lambda_j(t)) (u_j(t)^\top y)^2.$$

We analyze the coefficient $C(\lambda) = \lambda^2 - 2\lambda$. The derivative is $C'(\lambda) = 2\lambda - 2$, which is strictly negative for $\lambda \in [0, 1)$.
\begin{itemize}
    \item If $\lambda_j \in (0, 1)$, then $\lambda_j^2 - 2\lambda_j < 0$. Simply rotating $u_j$ (keeping $\lambda_j$ fixed) makes $(u_j(t)^\top y)^2 > 0$, multiplying a negative coefficient. Thus $f$ decreases.
    \item If $\lambda_j = 0$ (boundary case), the coefficient is 0 initially. We couple the rotation with an increase in eigenvalue, setting $\lambda_j(t) = t$ (moving inside the feasible set from the boundary). Then:
    $$\lambda_j(t)^2 - 2\lambda_j(t) = t^2 - 2t \approx -2t \quad (\text{for small } t).$$
    The squared alignment is $(u_j(t)^\top y)^2 \approx t^2 \|r\|^2$.
    The dominant term in the change of $f$ is roughly $-t^3 \|r\|^2 < 0$.
\end{itemize}

In all sub-cases where $y \notin \Image(U)$, there exists a valid direction in the tangent cone (rotating an unused or partially used eigenvector towards the error residual) that strictly decreases the objective. Therefore, $P$ is an unstable saddle point.
\end{proof}

\begin{remark}[Coupling with Neural Networks]
The favorable landscape properties described in Theorem \ref{th:boundary_classif} (global minima and unstable saddles) rely on the hypothesis that one can vary the eigenvalues $\lambda_i$ and the subspace $U$ independently on the manifold.

However, when $X$ is parametrized by a neural network with weights $\theta$ (i.e., $X = X(\theta)$), the variations of $\Lambda$ and $U$ are coupled through the Jacobian of the mapping $\theta \mapsto X$.
Let $\Delta \in T_X \R^{n \times p}$ be a variation in the output space induced by a weight update. From Corollary \ref{cor:exp_tangentspace_parameter}, the induced motion on the manifold is:
\begin{itemize}
    \item \textbf{Eigenvalue variations} $\dot{\Lambda}$ are driven by the diagonal part $\operatorname{diag}(\Omega_A)$, where $\Omega_A = U^\top \Delta V$.
    \item \textbf{Subspace rotations} $[Z, P]$ are driven by the off-diagonal part $\Omega_B = U_\perp^\top \Delta V$ (and the off-diagonals of $\Omega_A$).
\end{itemize}

To recover the landscape guarantees, the parametrization must be rich enough to decouple these controls \textit{i.e.} the mapping $d\theta \rightarrow (\Omega_A(d\theta) = U^\top (dX(\theta)[d\theta]) V , \Omega_B(d\theta) =  U_\perp^\top (dX(\theta)[d\theta]) V)$ must be surjective. In particular this is verified if $\theta \rightarrow X(\theta)$ is a submersion. 

%In particular, we require:
%\begin{enumerate}
%    \item \textbf{Eigenvalue Independence:} To ensure convergence of the singular values (satisfying $\lambda_i \to 1$ or $u_i^\top y \to 0$), the map $d\theta \mapsto \operatorname{diag}(U^\top (dX(\theta)[d\theta]) V)$ must be surjective onto the space of diagonal matrices.
%    
%    \item \textbf{Subspace Independence:} To escape saddle points where $y \notin \operatorname{Im}(U)$ (Theorem \ref{th:boundary_classif}), the network must be able to rotate the subspace towards the residual. This requires the map $d\theta \mapsto U_\perp^\top (dX(\theta)[d\theta]) V$ to be surjective onto $\R^{(n-p) \times p}$.
%\end{enumerate}
If the network architecture is too constrained (e.g., insufficient width), these surjectivity conditions may fail, creating "spurious" local minima induced by the parametrization rather than the objective itself.
\end{remark}

\section{Numerical results}\label{sec:num}
In this section, we present the practical relevance of the method on some simple toy problems tractable on a personal computer with simple architecture.  The  workstation used operates on Ubuntu (x86\_64) and is powered by an Intel Core i5-10300H processor (4 cores, 8 threads), featuring AVX2 instruction support (which is essential for accelerating TensorFlow operations on the CPU). The system is equipped with 32~GB of RAM, ensuring ample headroom for handling datasets and machine learning tasks efficiently. Tensorflow in CPU mode is used for computations.

\subsection{Regression cost}
In practice for a regression problem, one uses the protocol given in Algorithm 1.
\noindent\hrule height 1pt
\captionof{algorithm}{Deep VarPro Regression (Mini-batch)} \label{alg:regression}
\noindent\hrule

\begin{algorithmic}[1]
    \Require Training data $\mathcal{D} = \{(x_i, y_i)\}_{i=1}^{N}$, Batch size $B$, Learning rate $\eta$, Regularization $\varepsilon$.
    \State Initialize neural network parameters $\theta$ (Feature extractor $\Phi(x; \theta)$).
    \State Output dimension of $\Phi$ is $p$.
    
    \For{epoch $= 1$ to $E$}
        \State Shuffle $\mathcal{D}$ and split into batches.
        \For{each batch $(X_b, U_b) \in \mathcal{D}$}
            \State \textbf{1. Forward Pass (Basis Construction):}
            \State Compute feature matrix $\Phi_b = \Phi(X_b; \theta) \in \mathbb{R}^{B \times p}$.
    
            \State \textbf{2. Solve for Linear Coefficients ($\alpha$):}
            \State Construct matrix $M = \Phi_b^{\top} \Phi_b + \varepsilon I$.
            \State Compute RHS $v = \Phi_b^{\top} U_b$.
            \State Solve linear system $M \alpha^* = v$ for $\alpha^*$.
    
            \State \textbf{3. Loss Computation:}
            \State Compute projection $U_{pred} = \Phi_b \alpha^*$.
            \State Loss $\mathcal{L}(\theta) = \frac{1}{B} || U_b - U_{pred} ||^2$.
    
            \State \textbf{4. Backpropagation (Variable Projection):}
            \State Compute gradients $\nabla_\theta \mathcal{L}$.
            \State Update $\theta \leftarrow \theta - \eta \nabla_\theta \mathcal{L}$.
        \EndFor
        \State \textit{Optional: Compute validation loss with fixed $\alpha$ from training.}
    \EndFor
    
    \State \textbf{Final Step:}
    \State Compute global basis $\Phi_{all} = \Phi(X_{all}; \theta_{final})$.
    \State Solve for final global coefficients $\alpha_{final}$ on full dataset.
    \State \Return $\theta_{final}, \alpha_{final}$
\end{algorithmic}
\noindent\hrule

\begin{table}[H]
    \centering
    \caption{Simulation Parameters for regression}
    \label{tab:parameters_regression}
    \renewcommand{\arraystretch}{1.2} % Adds a little vertical breathing room
    \begin{tabular}{l c l}
        \toprule
        \textbf{Parameter} & \textbf{Value} & \textbf{Description} \\
        \midrule
        \multicolumn{3}{l}{\textit{Spatial Parameters}} \\
        $N$ & $10000$ & Number of collocation points \\
        $p$ & $100$ & Number of hidden neurons \\
        \midrule
        \multicolumn{3}{l}{\textit{Optimization Parameters}} \\
        Batch size & $512$ & Minibatch size \\
        $\eta$ (Learning rate) & $0.001$ & Learning rate for $w, b$ \\
        $ \varepsilon$ & $0.001$ & Tikhonov weight \\
        \bottomrule
    \end{tabular}
\end{table}

The parameters chosen to obtain results in Figure \ref{fig:regression_figs}, are given in Table \ref{tab:parameters_regression}. Note also that the network architecture is given by $p = 100$ features of the form $\sigma(w_j \cdot + b_j)$ where $\sigma$ is a regularized version of the hat activation function \textit{i.e}:

\begin{equation}\label{eq:hat_activation}
\sigma(x) \approx ReLU(1-|x|)
\end{equation}

\begin{figure}[H]
    \centering
    % --- Première figure ---
    \begin{subfigure}[b]{\textwidth}
        \centering
        \includegraphics[width=0.8\textwidth]{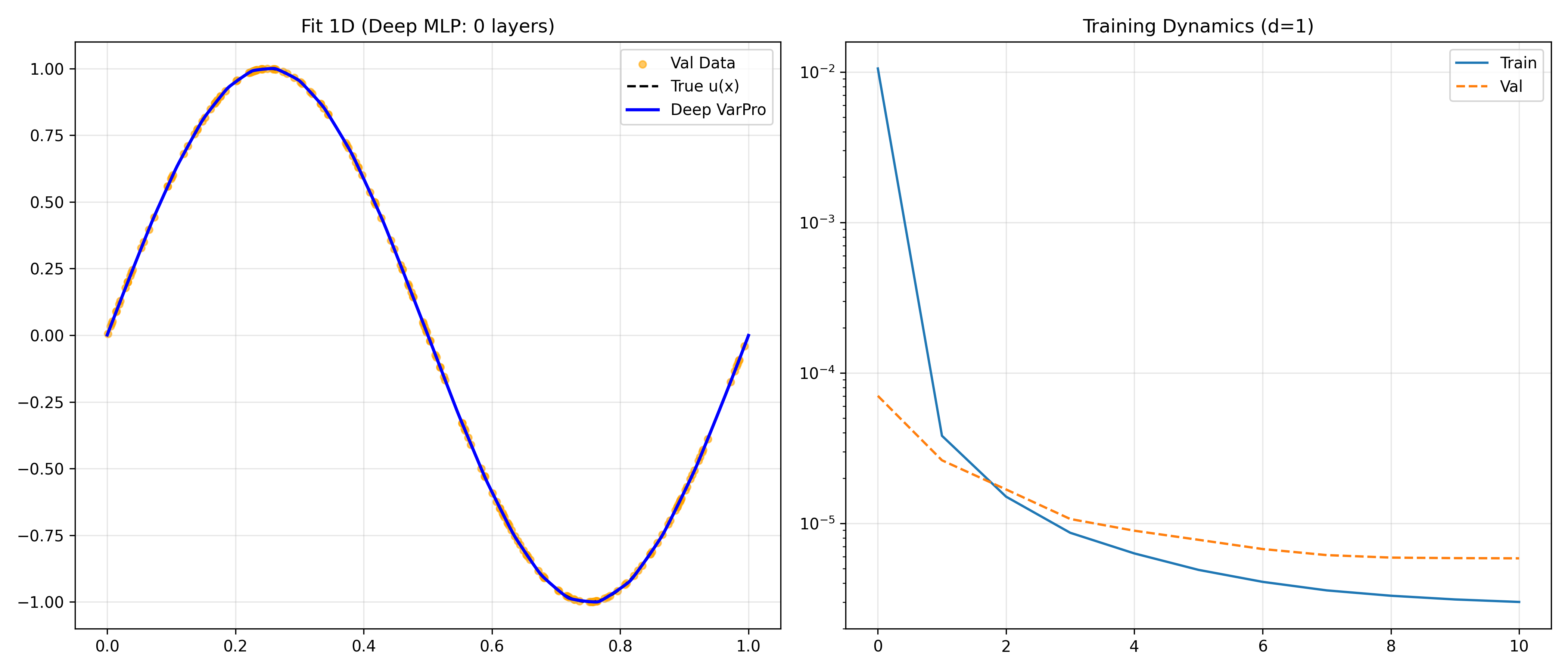}
        \caption{Case $d=1$, $f(x) = \sin(2 \pi x)$ }
        \label{fig:img1}
    \end{subfigure}
    
    \vspace{0.5cm} % Espacement vertical entre les figures

    % --- Deuxième figure ---
    \begin{subfigure}[b]{\textwidth}
        \centering
        \includegraphics[width=0.8\textwidth]{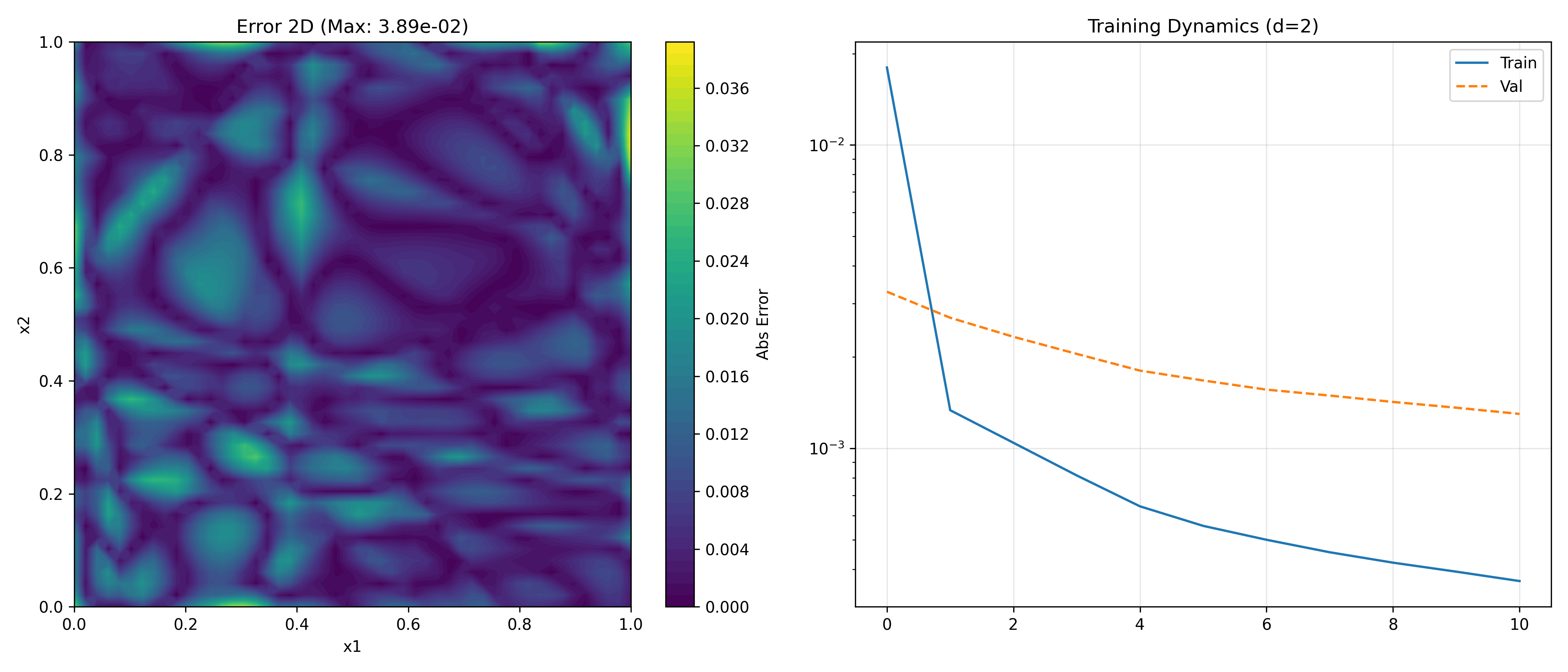}
        \caption{Case $d=2$, $f(x) = \sin(2 \pi x) + \sin(2 \pi y)$}
        \label{fig:img2}
    \end{subfigure}
    
    \caption{Results of VarPro on some regression problems}
    \label{fig:regression_figs}
\end{figure}
    % --- Troisième figure ---
\begin{figure}[H]
        \centering
        \includegraphics[width=0.5\textwidth]{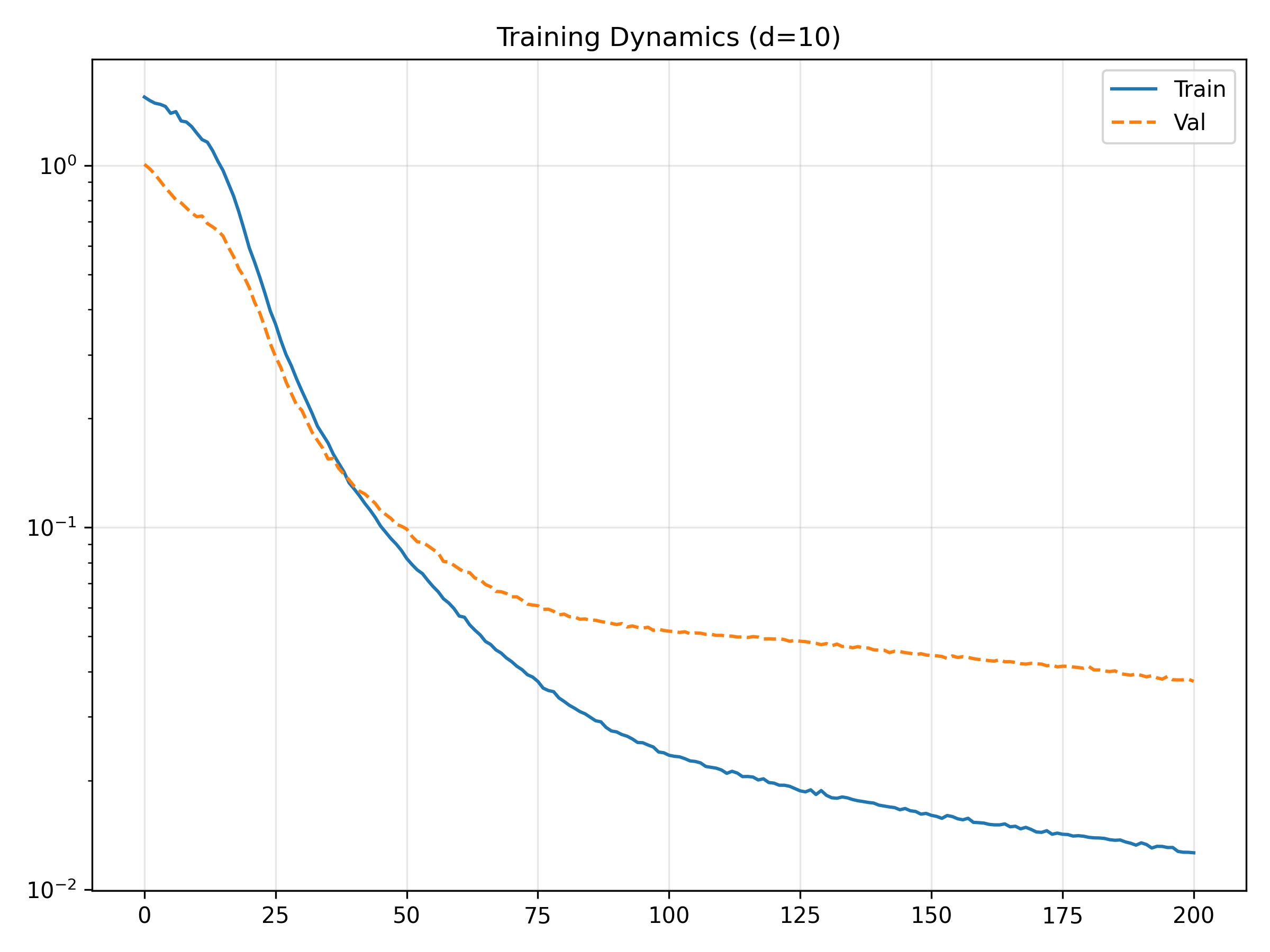}
        \caption{Case $d=10$, $f(x) = \sum_{i=1}^d \sin(2 \pi x_i)$}
        \label{fig:highdim_reg}
\end{figure}
We see in Figures \ref{fig:regression_figs}-\ref{fig:highdim_reg} that results are quite good even for high dimensional problems. Note that the regularization factor $\varepsilon$ must be large enough. Indeed if it is too small, one observes overfitting on Figure \ref{fig:overfitting}.

\begin{figure}[H]
        \centering
        \includegraphics[width=0.8\textwidth]{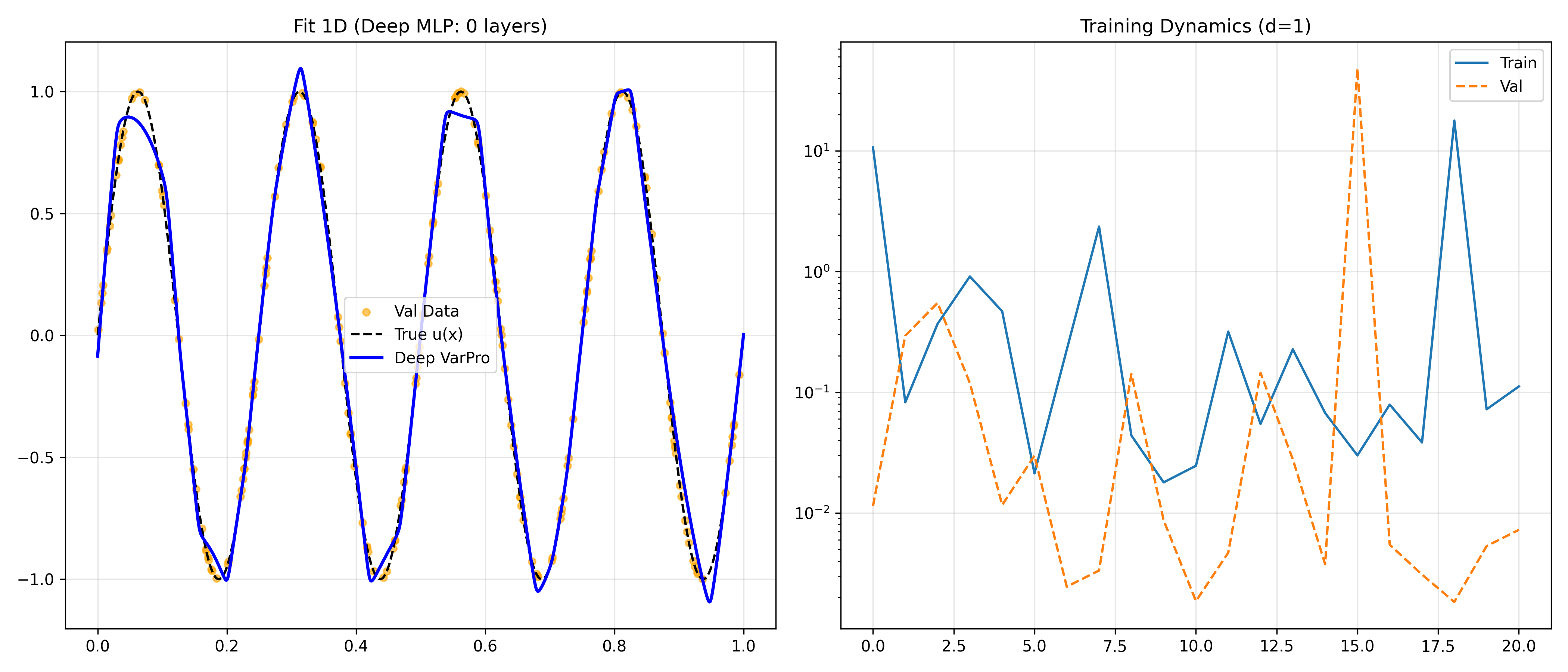}
        \caption{The case $d=1$ with $ \varepsilon= 10^{-16}, f(x) = \sin(8 \pi x)$}
        \label{fig:overfitting}
\end{figure}

\subsection{PINN cost}
As an example of PINN formulation, we try to solve a Poisson problem:
$$
- \Delta u = f \text{ on } \Omega = [0,1]^d 
$$
with Dirichlet boundary conditions. The source term $f$ is taken such that the solution is given by:

$$
u(x) := \sum_{i=1}^d \sin(\pi x_i).
$$
Here the network architecture is given by $p$ features of the form $\sigma(w_j \cdot + b_j)$ where $\sigma$ is the tanh activation function. The in domain and boundary batch size is $64$, the number of in domain sampled points is $512$ and so is the number of boundary sampled points. Finally, the boundary weight is $w_{bc} = 10$. The method shows good performance and is depicted in Algorithm 2 while results are given on Figure \ref{fig:poisson}.
\begin{figure}[H]
    \centering
    % --- Première figure ---
    \begin{subfigure}[b]{\textwidth}
        \centering
        \includegraphics[width=0.7\textwidth]{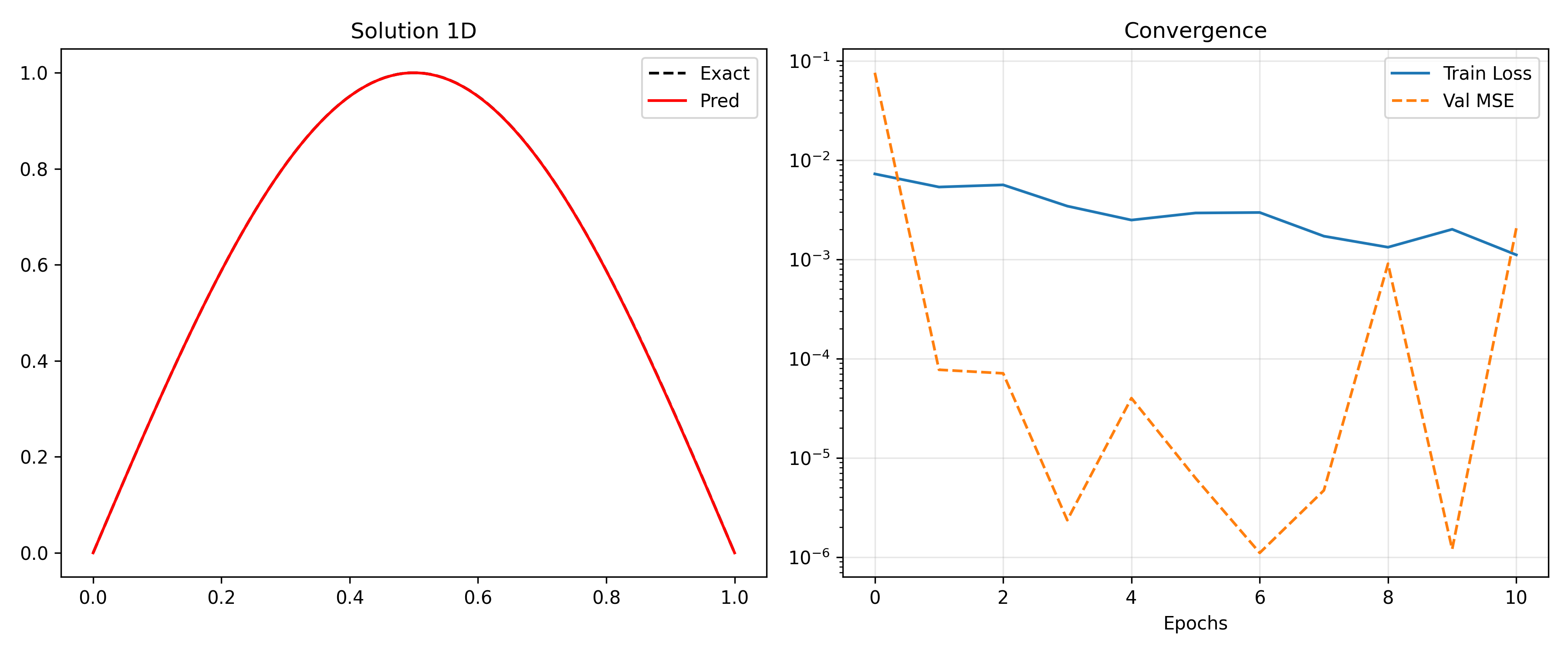}
        \caption{Case $d=1, p=20$ }
    \end{subfigure}
    
    \vspace{0.5cm} % Espacement vertical entre les figures

    % --- Deuxième figure ---
    \begin{subfigure}[b]{\textwidth}
        \centering
        \includegraphics[width=0.7\textwidth]{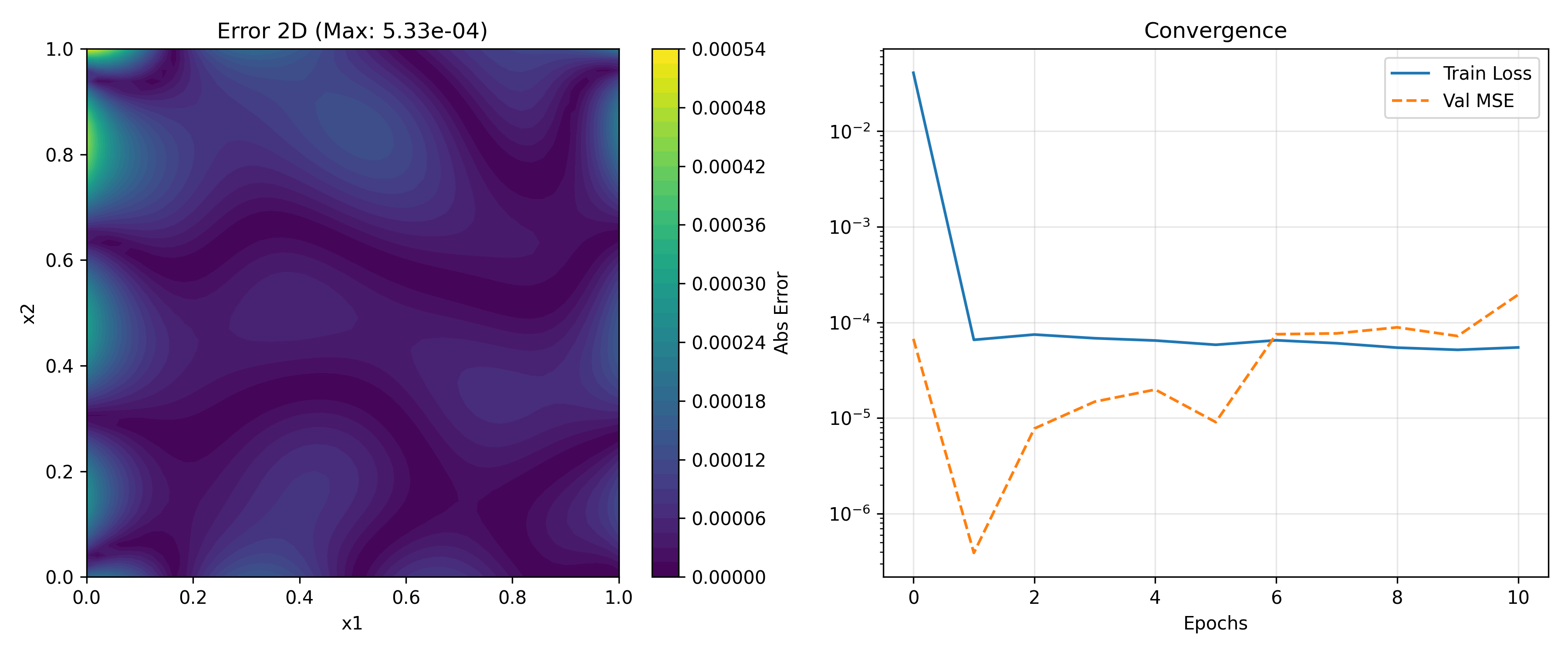}
        \caption{Case $d=2, p=50$}
    \end{subfigure}
    \caption{Results for some Poisson's problems}
    \label{fig:poisson}
\end{figure}

% --- Start of the Algorithm ---

% 1. Top visual rule
\noindent\hrule height 1pt 

% 2. Caption using captionof (allows the break)
\captionof{algorithm}{Deep VarPro Solver for Poisson Equation (Mini-batch)} \label{alg:laplace}

% 3. Middle visual rule
\noindent\hrule 

% 4. The content (automatically breaks across pages)
\begin{algorithmic}[1]
    \Require Domain $\Omega \subset \mathbb{R}^d$, Boundary $\partial \Omega$
    \Require PDE: $-\Delta u = f$ in $\Omega$, $u = g$ on $\partial \Omega$
    \Require Hyperparameters: Basis size $p$, Learning rate $\eta$, Weight $w_{bc}$, Batch size $B$
    \State Initialize Neural Network basis $\Phi(\cdot; \theta): \mathbb{R}^d \to \mathbb{R}^p$
    \State Generate collocation data $\mathcal{D}_{int} = \{x_i, f(x_i)\}_{i=1}^{N_{col}}$
    \State Generate boundary data $\mathcal{D}_{bnd} = \{x_j, g(x_j)\}_{j=1}^{N_{bnd}}$

    \While{epoch $<$ E}
        \State Shuffle $\mathcal{D}_{int}$ and $\mathcal{D}_{bnd}$
        \For{each minibatch $(X_c, F_c)$ from $\mathcal{D}_{int}$ and $(X_b, U_b)$ from $\mathcal{D}_{bnd}$}
            
            \State \Comment{\textbf{Step 1: Basis Evaluation \& Derivatives}}
            \State Compute $\Phi(X_c)$ and $\Phi(X_b)$ via forward pass
            \State Compute $\Delta \Phi(X_c)$ via Automatic Differentiation (Jacobian trace)
            
            \State \Comment{\textbf{Step 2: Linear Assembly (VarPro)}}
            \State Construct the design matrix $A$ and target vector $\mathbf{y}$:
            \State $A \leftarrow \begin{bmatrix} -\Delta \Phi(X_c) \\ w_{bc} \cdot \Phi(X_b) \end{bmatrix}$
            \State $y \leftarrow \begin{bmatrix} F_c \\ w_{bc} \cdot U_b \end{bmatrix}$
            
            \State \Comment{\textbf{Step 3: Solve Coefficients $\boldsymbol{\alpha}$}}
            \State $\alpha^* \leftarrow (A^{\top} A + \epsilon I)^{-1} A^{\top} y$ \Comment{Least Squares}
            
            \State \Comment{\textbf{Step 4: Gradient Descent on $\theta$}}
            \State Compute Loss $\mathcal{L}(\theta) = \frac{1}{B} || A \alpha^* - y ||_2^2$
            \State Compute gradients $\nabla_\theta \mathcal{L}$
            \State Update $\theta \leftarrow \theta - \eta \nabla_\theta \mathcal{L}$ \Comment{Adam Optimizer}
            
        \EndFor
        
        \State \Comment{Optional: Validation step}
        \State Update $\alpha_{final}$ using full dataset or large batch
    \EndWhile

    \Ensure Approximate solution $u(x) \approx \Phi(x; \theta) \alpha_{final}$
\end{algorithmic}
\noindent\hrule

It is even possible to solve the heat equation solving first a regression problem to approach the initial data and then perform a VarPro resolution of the following implicit Euler problem:

$$
(I - dt \Delta )u_{t+dt} = u_t  
$$
(associated to a boundary condition) at each time step. In Figure \ref{fig:heat_error}, we show the relevance of the method for the list of parameters given in Table \ref{tab:parameters}. In this test case, we solve:

$$
\left\{
\begin{array}{rl}
\partial_t u =& \Delta u\\
u(t=0,x) =& \sin(\pi x)
\end{array}
\right.
$$
so that the exact solution writes $u(t,x) = \sin(\pi x) e^{- \pi^2 t}$. The $L^2$ error increases through time and stays inferior to $\approx 10^{-2}$ until $T_{final} = 0.15 s$.

\begin{table}[ht]
    \centering
    \caption{Simulation Parameters for the implicit Euler scheme}
    \label{tab:parameters}
    \renewcommand{\arraystretch}{1.2} % Adds a little vertical breathing room
    \begin{tabular}{l c l}
        \toprule
        \textbf{Parameter} & \textbf{Value} & \textbf{Description} \\
        \midrule
        \multicolumn{3}{l}{\textit{Spatial Parameters}} \\
        $d$ & $1$ & Spatial dimension \\
        $N_{colloc}$ & $2000$ & Number of collocation points \\
        $N_{bnd}$ & $500$ & Number of boundary points \\
        $p$ & $40$ & Number of hidden neurons \\
        \midrule
        \multicolumn{3}{l}{\textit{Temporal Parameters}} \\
        $T_{final}$ & $0.15$ & Final simulation time \\
        $\Delta t$ & $0.01$ & Time step size \\
        \midrule
        \multicolumn{3}{l}{\textit{Optimization Parameters}} \\
        Batch size & $200$ & Minibatch size \\
        Epochs (init) & $50$ & Pre-training epochs ($t=0$) \\
        Epochs (step) & $50$ & Epochs per time step \\
        $\eta$ (Learning rate) & $0.005$ & Learning rate for $w, b$ \\
        $w_{bc}$ & $50.0$ & Boundary condition weight \\
        \bottomrule
    \end{tabular}
\end{table}

\begin{figure}[H]
        \centering
        \includegraphics[width=0.6\textwidth]{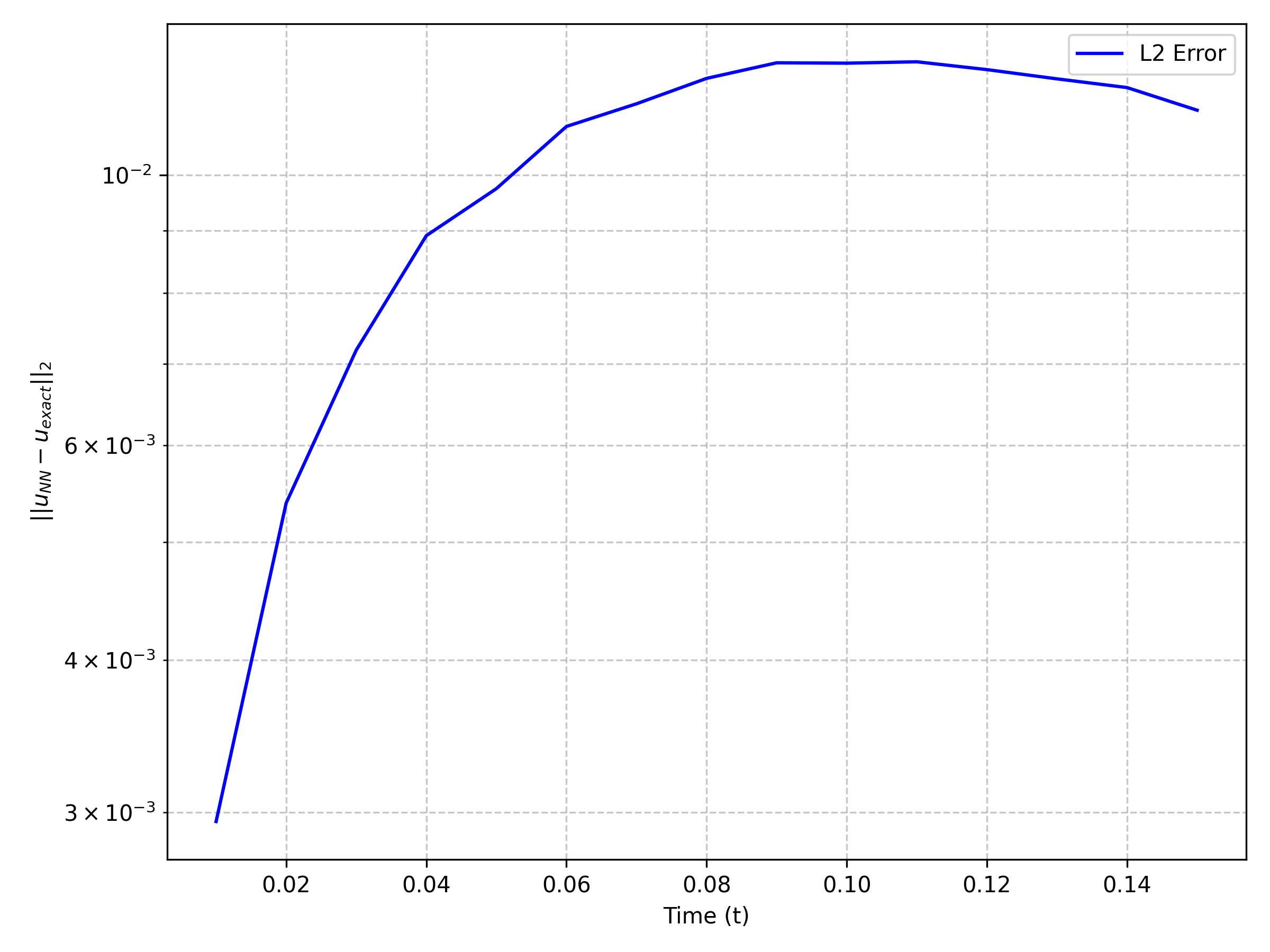}
        \caption{The $L^2$ error through time for the implicit Euler scheme}
        \label{fig:heat_error}
\end{figure}

We can compare its performance with a full VarPro PINNs (see Figure \ref{fig:heat_error_full_PINNs}) using Algorithm 3 for which parameters are given in Table \ref{tab:parameters_heat_pinns}. Results are comparable but the complexity of the network for Algorithm 3 is higher with 3 hidden layers of 30 neurons each.

\begin{figure}[H]
        \centering
        \includegraphics[width=0.6\textwidth]{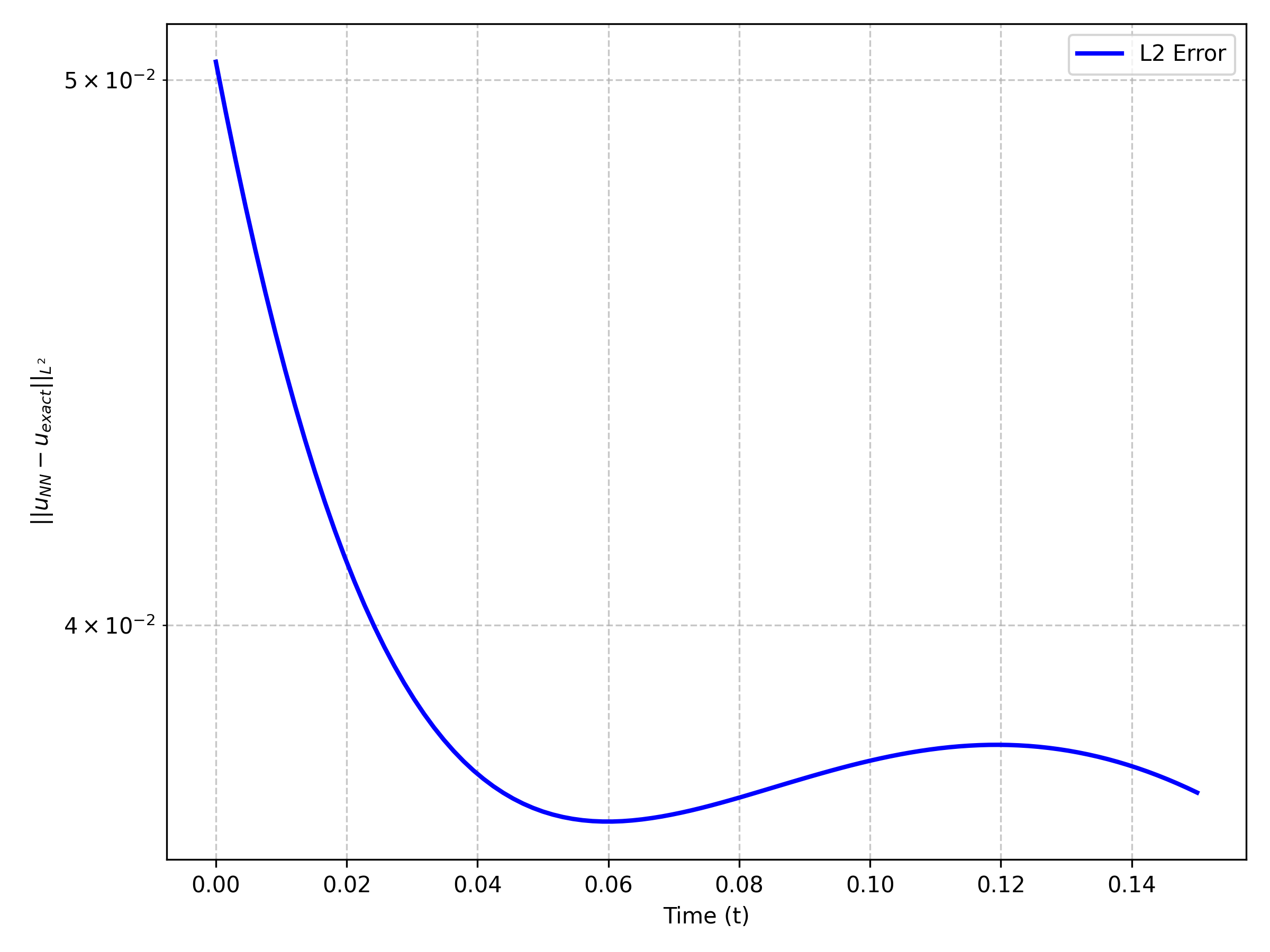}
        \caption{The $L^2$ error through time for the full PINNs formulation}
        \label{fig:heat_error_full_PINNs}
\end{figure}

\noindent\hrule height 1pt
\captionof{algorithm}{Deep VarPro Solver for Heat Equation (Mini-batch)} \label{alg:heat}
\noindent\hrule

\begin{algorithmic}[1]
    \Require Domain $\Omega \subset \mathbb{R}^d$, Time $T > 0$, Spacetime $\mathcal{D} = \Omega \times [0, T]$
    \Require PDE: $\partial_t u - \Delta u = f$ in $\mathcal{D}$
    \Require IC: $u(0, x) = u_0(x)$ for $x \in \Omega$
    \Require BC: $u(t, x) = g(t, x)$ on $\partial \Omega \times [0, T]$
    \Require Hyperparameters: Basis size $p$, Learning rate $\eta$, Weights $w_{ic}, w_{bc}$, Batch size $B$

    \State Initialize Neural Network basis $\Phi(\cdot; \theta): \mathbb{R}^{d+1} \to \mathbb{R}^p$
    \State Generate collocation data $\mathcal{D}_{pde} = \{(t_i, x_i), f_i\}_{i=1}^{N_{col}}$
    \State Generate initial data $\mathcal{D}_{ic} = \{(0, x_j), u_0(x_j)\}_{j=1}^{N_{ic}}$
    \State Generate boundary data $\mathcal{D}_{bc} = \{(t_k, x_k), g_k\}_{k=1}^{N_{bc}}$

    \While{epoch $<$ E}
        \State Shuffle datasets
        \For{minibatches $(X_p, F_p) \in \mathcal{D}_{pde}$, $(X_i, U_i) \in \mathcal{D}_{ic}$, $(X_b, U_b) \in \mathcal{D}_{bc}$}
            
            \State \Comment{\textbf{Step 1: Basis Evaluation \& Derivatives}}
            \State Compute $\Phi(X_p), \Phi(X_i), \Phi(X_b)$ via forward pass
            \State Compute $\partial_t \Phi(X_p)$ and $\Delta \Phi(X_p)$ via AutoDiff
            \State Compute PDE Operator: $\mathcal{L}[\Phi] = \partial_t \Phi(X_p) - \Delta \Phi(X_p)$
            
            \State \Comment{\textbf{Step 2: Linear Assembly (VarPro)}}
            \State Construct design matrix $A$ and target vector $\mathbf{y}$:
            \State $A \leftarrow \begin{bmatrix} \mathcal{L}[\Phi] \\ w_{ic} \cdot \Phi(X_i) \\ w_{bc} \cdot \Phi(X_b) \end{bmatrix}, \quad y \leftarrow \begin{bmatrix} F_p \\ w_{ic} \cdot U_i \\ w_{bc} \cdot U_b \end{bmatrix}$
            
            \State \Comment{\textbf{Step 3: Solve Coefficients $\boldsymbol{\alpha}$}}
            \State $\alpha^* \leftarrow (A^{\top} A + \epsilon I)^{-1} A^{\top} y$ \Comment{Least Squares}
            
            \State \Comment{\textbf{Step 4: Gradient Descent on $\theta$}}
            \State Compute Loss $\mathcal{L}(\theta) = \frac{1}{B} || A \alpha^* - y ||_2^2$
            \State Compute gradients $\nabla_\theta \mathcal{L}$
            \State Update $\theta \leftarrow \theta - \eta \nabla_\theta \mathcal{L}$ \Comment{Adam Optimizer}
            
        \EndFor
    \EndWhile

    \Ensure Approximate solution $u(t, x) \approx \Phi(t, x; \theta) \alpha_{final}$
\end{algorithmic}
\noindent\hrule

\begin{table}[ht]
    \centering
    \caption{Simulation Parameters for the Deep VarPro Heat Equation Solver}
    \label{tab:parameters_heat_pinns}
    \renewcommand{\arraystretch}{1.2} % Adds a little vertical breathing room
    \begin{tabular}{l c l}
        \toprule
        \textbf{Parameter} & \textbf{Value} & \textbf{Description} \\
        \midrule
        \multicolumn{3}{l}{\textit{Data Parameters}} \\
        $N_{col}$ & $10\,000$ & Interior collocation points \\
        $N_{init}$ & $2\,000$ & Initial condition points ($t=0$) \\
        $N_{bc}$ & $2\,000$ & Boundary condition points ($x \in \partial \Omega$) \\
        
        \midrule
        \multicolumn{3}{l}{\textit{Architecture Parameters}} \\
        $m$ & $30$ & Neurons per hidden layer \\
        $L$ & $3$ & Number of hidden layers \\
        $p$ (Basis) & $10$ & Size of neural basis $\Phi(x)$ \\
        $\sigma(\cdot)$ & $\tanh$ & Activation function \\
        
        \midrule
        \multicolumn{3}{l}{\textit{Optimization Parameters}} \\
        Batch ($B_{pde}$) & $256$ & Mini-batch size for PDE \\
        Batch ($B_{ic}, B_{bc}$) & $64$ & Mini-batch size for IC/BCs \\
        Epochs & $60$ & Total training epochs \\
        $\eta$ & $0.001$ & Adam learning rate \\
        $w_{ic}, w_{bc}$ & $100.0$ & Penalty weights for IC and BC \\
        $\lambda$ (Ridge) & Adaptive & $0.01 \times (0.95)^{\text{epoch}}$ \\
        \bottomrule
    \end{tabular}
\end{table}

\section{Conclusion and Perspectives}

We have presented a rigorous framework for Variable Projection in neural networks, framing the optimization of the linear output layer as a trajectory on the Grassmannian manifold. Our analysis guarantees global optimality in the over-parametrized regime and local stability for small residuals in the under-parametrized case. Furthermore, numerical evidence confirms that decoupling the linear and non-linear dynamics yields robust convergence properties.

Looking forward, this geometric perspective offers a bridge to recent advances in feature learning, particularly regarding time-scale separation. While \cite{Barboni2025} analyzes the ``two-timescale'' regime where linear weights evolve faster than hidden features, our VarPro approach can be interpreted as the rigorous limit of infinite time-scale separation. A natural next step is to extend this finite-dimensional Grassmannian framework to the infinite-width limit. By moving from a finite basis to kernel optimization within a Reproducing Kernel Hilbert Space (RKHS), one could potentially derive a variational characterization of the optimal kernel evolution, unifying the geometry of VarPro with the dynamics of feature learning.

\bibliographystyle{plain} % ou alpha, ieee, abbrv…
\bibliography{biblio}     % sans l’extension .bib

\end{document}